\newtheorem{theorem}{Theorem}[section]
\newtheorem{proposition}[theorem]{Proposition}
\newtheorem{lemma}[theorem]{Lemma}
\newtheorem{corollary}[theorem]{Corollary}
\theoremstyle{definition}
\newtheorem{example}[theorem]{Example}
\newtheorem{definition}[theorem]{Definition}
\newtheorem{remark}[theorem]{Remark}
\begin{document}
	
\author[Z. Vesali Mahmood]{Zari Vesali Mahmood}
\address{Department of Mathematics, Tarbiat Modares University, 14115-111 Tehran Jalal AleAhmad Nasr, Iran}
\email{v.zarivesali@modares.ac.ir}

\author[A. Moussavi]{Ahmad Moussavi$^*$}
\address{Department of Mathematics, Tarbiat Modares University, 14115-111 Tehran Jalal AleAhmad Nasr, Iran}
\email{moussavi.a@modares.ac.ir; moussavi.a@gmail.com}

\thanks{$^*$Corresponding author: Ahmad Moussavi, email: moussavi.a@modares.ac.ir; moussavi.a@gmail.com}

\author[P. Danchev]{Peter Danchev}
\address{Institute of Mathematics and Informatics, Bulgarian Academy of Sciences, 1113 Sofia, Bulgaria}
\email{danchev@math.bas.bg; pvdanchev@yahoo.com}

\title[2-$UNJ$ rings]{On Rings with the 2-UNJ Property}
\keywords{ $UNJ$ rings, 2-$UNJ$ rings, group rings, Morita contexts, regular rings, exchange rings}
\subjclass[2010]{16S34, 16U60, 20C07}

\maketitle




\begin{abstract}
In this paper, we introduce a new class of rings calling them {\it 2-UNJ rings}, which generalize the well-known 2-UJ, 2-UU and UNJ rings. Specifically, a ring $R$ is called 2-UNJ if, for every unit $u$ of $R$, the inclusion                                                                                                                                                                                                                                                                                                                                                                $u^2 \in 1 + Nil(R) + J(R)$ holds, where $Nil(R)$ is the set of nilpotent elements and $J(R)$ is the Jacobson radical of $R$. We show that every 2-UJ, 2-UU or UNJ ring is 2-UNJ, but the converse does {\it not} necessarily hold, and we also provide counter-examples to demonstrate this explicitly. We, moreover, investigate the connections between these rings and other algebraic properties such as being potent, tripotent, regular and exchange rings, respectively. In particular, we thoroughly study some natural extensions, like matrix rings and Morita contexts, obtaining new characterizations that were not addressed in previous works. Furthermore, we establish conditions under which group rings satisfy the 2-UNJ property. These results not only provide a better understanding of the structure of 2-UNJ rings, but also pave the way for future intensive research in this area.
\end{abstract}

\section{Introduction and Basic Concepts}

In this paper, let \( R \) be an associative ring with identity \( 1=1_R \), which is {\it not} necessarily commutative. For such a ring \( R \), we denote by \( U(R) \), \( C(R) \), \( Nil(R) \) and \( Id(R) \) the set of units of $R$, the center of $R$, the set of nilpotent elements and the set of idempotent elements of \( R \), respectively. Moreover, we define \( M_n(R) \) and \( T_n(R) \) to be the rings of all \( n \times n \) matrices and upper triangular \( n \times n \) matrices over \( R \), respectively. Traditionally, a ring \( R \) is said to be {\it abelian} if every idempotent element is central, i.e., \( Id(R) \subseteq C(R) \).

It is a well-established fact that the containment \( 1 + J(R) \subseteq U(R) \) is always true. A ring \( R \) is referred to as a {\it UJ-ring} if the reverse inclusion holds as well, that is, \( U(R) = 1 + J(R) \) (see the former source \cite{D}, and \cite{14} too). Motivated by the work done in \cite{1}, a ring \( R \) is defined as {\it 2-UJ} if, for every \( u \in U(R) \), there exists an element \( j \in J(R) \) such that \( u^2 = 1 + j \). These 2-UJ rings naturally extend the concept of UJ-rings. In \cite{1}, it was demonstrated that in the case of 2-UJ ring the properties of being semi-regular, exchange and clean are all equivalent.

In a similar vein, a ring \( R \) is called a {\it UU-ring} if \( U(R) = 1 + Nil(R) \) (see, e.g., \cite{12}). As a logical extension of UU-rings, Sheibani and Chen introduced the concept of {\it 2-UU} rings in \cite{10}: a ring \( R \) is classified as 2-UU if the square of each unit can be expressed as the sum of 1 and a nilpotent element. They proved there that a ring \( R \) is strongly 2-nil-clean if, and only if, \( R \) is simultaneously exchange and 2-UU.

Further, as a common expansion of UU-rings and UJ-rings, Ko\c{s}an et al. defined the concept of UNJ rings in \cite{22}. It is well-established that the inclusion \( 1 + Nil(R) + J(R) \subseteq U(R) \) is always valid. A ring \( R \) is classified as a UNJ-ring if the reverse inclusion holds as well, that is, \( U(R) = 1 + Nil(R) + J(R) \). They demonstrated that, if \( R \) is a semi-local ring, then \( R \) is UJ if, and only if, \( R \) is UNJ, and this is tantamount to the existence of some \( n \in \mathbb{N} \) such that \( R/J(R) \cong \mathbb{Z}_{2^n} \). Furthermore, they proved there that any UNJ-ring is Dedekind-finite, and that a UNJ-ring is semi-regular if, and only if, it is an exchange ring, which, in turn, is equivalent to being a clean ring.

Building on the aforementioned concepts, we introduce a broader class of rings, termed {\it 2-UNJ rings}: a ring \( R \) is called 2-UNJ if, for every \( u \in U(R) \), we have \( u^2 = 1 + q + j \), where \( q \in Nil(R) \) and \( j \in J(R) \). It is pretty evident that each UJ, UU and UNJ ring is a 2-UNJ ring, although the converse is manifestly {\it not} generally true. Similarly, while all 2-UU and 2-UJ rings are 2-UNJ, the converse does {\it not} necessarily hold in all generality. Our objective in the present paper is to provide a detailed exploration of these 2-UNJ rings, focusing on comparing their fundamental properties with those of 2-UU and 2-UJ rings. In addition, we aim to identify new and fascinating characteristics of 2-UNJ rings that have {\it not} been thoroughly examined or widely discussed in the existing body of literature.

Let us now briefly review some classical concepts that was mentioned above and which will play a crucial role in our upcoming discussions below. A ring \( R \) is called {\it boolean} if every element of \( R \) is an idempotent. More generally, a ring \( R \) is termed {\it regular} (respectively, {\it unit-regular}) in the sense of von Neumann, provided, for each \( a \in R \), there exists \( x \in R \) (respectively, \( x \in U(R) \)) such that \( axa = a \). Additionally, a ring \( R \) is said to be {\it strongly regular} if, for any \( a \in R \), we have \( a \in a^2R \).

Recall likewise that a ring \( R \) is called {\it exchange} if, for every \( a \in R \), there exists an idempotent \( e \in R \) such that \( e \in aR \) and \( 1 - e \in (1 - a)R \). Besides, a ring \( R \) is defined as {\it clean} if each element of \( R \) can be written as the sum of an idempotent and a unit (cf. \cite{8}). It is principally known that every clean ring is exchange; however, the converse is {\it not} generally true, although it holds in the abelian case (see, for instance, \cite[Proposition 1.8]{8}).

Similarly, a ring \( R \) is termed {\it semi-regular} if the quotient ring \( R/J(R) \) is regular and each idempotent in \( R/J(R) \) lifts to an idempotent in \( R \). It is well known that every semi-regular ring is exchange, though the converse does {\it not} always hold (see, e.g., \cite{8}).

Furthermore, mimicking the work of Chen (\cite{16}), an element of a ring is said to be {\it \( J \)-clean} if it can be expressed as the sum of an idempotent and an element from the Jacobson radical, and a ring \( R \) is called \( J \)-clean if each element of \( R \) is \( J \)-clean. Equivalently, $R$ is \( J \)-clean exactly when \( R/J(R) \) is boolean and idempotents lift modulo \( J(R) \); in order to keep a record straight, such rings are also referred to as {\it semi-boolean} in \cite{25}. In addition, it was shown in \cite{14} that a ring \( R \) is \( J \)-clean precisely when it is a clean \( UJ \)-ring.

\medskip

Now, we have at our disposal the following diagram, which violates the relationships between the mentioned above classes of rings:

\begin{center}
\tikzset{every picture/.style={line width=0.75pt}} 

\begin{tikzpicture}[x=0.75pt,y=0.75pt,yscale=-1,xscale=1]

\draw   (41,50) -- (101,50) -- (101,90) -- (41,90) -- cycle ;
\draw    (101.17,70.5) -- (137.5,70.26) ;
\draw [shift={(139.5,70.25)}, rotate = 179.63] [color={rgb, 255:red, 0; green, 0; blue, 0 }  ][line width=0.75]    (10.93,-3.29) .. controls (6.95,-1.4) and (3.31,-0.3) .. (0,0) .. controls (3.31,0.3) and (6.95,1.4) .. (10.93,3.29)   ;
\draw    (120.73,150.8) -- (169.29,168.48) ;
\draw [shift={(171.17,169.17)}, rotate = 200.01] [color={rgb, 255:red, 0; green, 0; blue, 0 }  ][line width=0.75]    (10.93,-3.29) .. controls (6.95,-1.4) and (3.31,-0.3) .. (0,0) .. controls (3.31,0.3) and (6.95,1.4) .. (10.93,3.29)   ;
\draw    (240.5,70.25) -- (201.83,70.17) ;
\draw [shift={(199.83,70.17)}, rotate = 0.12] [color={rgb, 255:red, 0; green, 0; blue, 0 }  ][line width=0.75]    (10.93,-3.29) .. controls (6.95,-1.4) and (3.31,-0.3) .. (0,0) .. controls (3.31,0.3) and (6.95,1.4) .. (10.93,3.29)   ;
\draw    (270,89.75) -- (225.34,108.96) ;
\draw [shift={(223.5,109.75)}, rotate = 336.73] [color={rgb, 255:red, 0; green, 0; blue, 0 }  ][line width=0.75]    (10.93,-3.29) .. controls (6.95,-1.4) and (3.31,-0.3) .. (0,0) .. controls (3.31,0.3) and (6.95,1.4) .. (10.93,3.29)   ;
\draw    (220,150.25) -- (173.03,168.44) ;
\draw [shift={(171.17,169.17)}, rotate = 338.83] [color={rgb, 255:red, 0; green, 0; blue, 0 }  ][line width=0.75]    (10.93,-3.29) .. controls (6.95,-1.4) and (3.31,-0.3) .. (0,0) .. controls (3.31,0.3) and (6.95,1.4) .. (10.93,3.29)   ;
\draw    (70.67,89.83) -- (117.16,109.47) ;
\draw [shift={(119,110.25)}, rotate = 202.9] [color={rgb, 255:red, 0; green, 0; blue, 0 }  ][line width=0.75]    (10.93,-3.29) .. controls (6.95,-1.4) and (3.31,-0.3) .. (0,0) .. controls (3.31,0.3) and (6.95,1.4) .. (10.93,3.29)   ;
\draw    (170.17,89.83) -- (171.14,167.17) ;
\draw [shift={(171.17,169.17)}, rotate = 269.28] [color={rgb, 255:red, 0; green, 0; blue, 0 }  ][line width=0.75]    (10.93,-3.29) .. controls (6.95,-1.4) and (3.31,-0.3) .. (0,0) .. controls (3.31,0.3) and (6.95,1.4) .. (10.93,3.29)   ;
\draw   (141,170) -- (201,170) -- (201,210) -- (141,210) -- cycle ;
\draw   (90,110.5) -- (150,110.5) -- (150,150.5) -- (90,150.5) -- cycle ;
\draw   (190.5,110) -- (250.5,110) -- (250.5,150) -- (190.5,150) -- cycle ;
\draw   (241,50) -- (301,50) -- (301,90) -- (241,90) -- cycle ;
\draw   (140,50) -- (200,50) -- (200,90) -- (140,90) -- cycle ;

\draw (223.5,132) node [align=left] {\begin{minipage}[lt]{31.28pt}\setlength\topsep{0pt}
2-UJ
\end{minipage}};
\draw (172.33,192.75) node [align=left] {\begin{minipage}[lt]{38.99pt}\setlength\topsep{0pt}
2-UNJ
\end{minipage}};
\draw (120,132.5) node [align=left] {\begin{minipage}[lt]{28.79pt}\setlength\topsep{0pt}
2-UU
\end{minipage}};
\draw (168,69) node [align=left] {\begin{minipage}[lt]{22.33pt}\setlength\topsep{0pt}
UNJ
\end{minipage}};
\draw (273,69) node [align=left] {\begin{minipage}[lt]{21.08pt}\setlength\topsep{0pt}
UJ
\end{minipage}};
\draw (75,70) node [align=left] {\begin{minipage}[lt]{25.61pt}\setlength\topsep{0pt}
UU
\end{minipage}};

\end{tikzpicture}
\end{center}

\section{Examples and Basic Properties of 2-UNJ Rings}

In the current section, we present illustrative examples and establish fundamental properties of 2-UNJ rings. Through a variety of constructions and counterexamples, we target to clarify the boundaries of the 2-UNJ class and demonstrate how it logically encompasses several previously studied structures such as UJ, UU and UNJ rings. These foundational insights not only enrich our understanding of the algebraic behavior of 2-UNJ rings, but also set the stage for the deeper structural results examined in subsequent sections.

\medskip

We begin here with our main instrument.

\begin{definition}
A ring $R$ is called {\it 2-$UNJ$} if, for each $u\in U(R)$, $u^2=1+j+q$, where $j\in J(R)$ and $q\in Nil(R)$.	
\end{definition}

The next constructions are pivotal for motivating our further writing.

\begin{example}\label{con xemple}
(1) Every 2-UJ ring is a 2-UNJ ring. However, the converse does {\it not} necessarily hold. Indeed, consider the ring $A = \mathbb{F}_3\langle x,y : x^2=0 \rangle$. Consulting with \cite[Lemma 3.18]{ns}, we have:
\[ U(A) = \{\mu + \mathbb{F}_3x + xAx : \mu \in U(F_3)\}, \]
\[ Nil(A) = \{\mathbb{F}_3x + xAx\}. \]
It can easily be checked that $J(A) = (0)$. Now, we prove that $A$ is a 2-UNJ ring, but {\it not} 2-UJ. For this purpose, set $u := \mu + \alpha x + x\beta x \in U(A)$, where $\alpha \in \mathbb{F}_3$ and $\beta \in A$. Thus, one calculates that
\[ u^2 = \mu^2 + 2\mu(\alpha x + x\beta x) \in 1 + Nil(A), \]
so $A$ is a 2-UU ring and hence 2-UNJ. However, it is quite clear that $A$ is {\it not} 2-UJ, as pursued, because $1 + x \in U(A)$, but $(1 + x)^2 = 1 - x + x^2 \notin 1 + J(A)$.

\medskip
	
(2) Every 2-UU ring is a 2-UNJ ring. However, the converse does {\it not} necessarily hold. Indeed, consider the ring $B = \mathbb{F}_2[[x]]$. Exploiting \cite[Example 2.1]{22}, $B$ is a UNJ ring and hence 2-UNJ. But, for $1 + x \in U(B)$, we compute $(1 + x)^2 = 1 + x^2 \notin 1 + Nil(B)$ and, therefore, $B$ is {\it not} a 2-UU ring, as asked for.
\end{example}

Particularly, we derive:

\begin{example}\label{3.32}
The ring $\mathbb{Z}_3$ is 2-$UNJ$, but is {\it not} $UNJ$.	
\end{example}

We now proceed by proving some elementary but useful properties.

\begin{proposition}\label{3.1}
Every finite direct product of 2-UNJ rings is again a 2-UNJ ring.
\end{proposition}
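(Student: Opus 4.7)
The plan is to reduce to the two-factor case by induction on the number of factors and then exploit the componentwise description of units, the Jacobson radical, and nilpotent elements in a direct product. The whole argument is essentially a compatibility check of the defining identity $u^2 = 1 + j + q$ with the product decomposition.

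First I would observe that if $R = R_1 \times R_2$, then the identity of $R$ is $(1_{R_1}, 1_{R_2})$, and the following standard identifications hold:
\[
U(R) = U(R_1) \times U(R_2), \quad J(R) = J(R_1) \times J(R_2), \quad Nil(R) = Nil(R_1) \times Nil(R_2).
\]
(The first two are classical; the third uses that a pair $(a_1,a_2)$ is nilpotent in $R$ iff some common power $(a_1^n,a_2^n)$ is zero, which happens iff each $a_i$ is nilpotent in $R_i$.) Arithmetic in $R$ is also componentwise, so in particular $(u_1,u_2)^2 = (u_1^2, u_2^2)$.

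Next, given any $u = (u_1, u_2) \in U(R)$, each $u_i \in U(R_i)$, so by the 2-UNJ hypothesis applied in each $R_i$ there exist $j_i \in J(R_i)$ and $q_i \in Nil(R_i)$ with $u_i^2 = 1_{R_i} + j_i + q_i$. Setting $j := (j_1, j_2) \in J(R)$ and $q := (q_1, q_2) \in Nil(R)$, the componentwise computation yields $u^2 = 1_R + j + q$, so $R_1 \times R_2$ is 2-UNJ. An obvious induction on $n$ then gives the statement for an arbitrary finite product $R_1 \times \cdots \times R_n$.

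There is no real obstacle here; the only point that requires a moment of care is verifying that nilpotent elements in a finite product coincide with tuples of componentwise nilpotents, since this is the ingredient that is not purely formal from the ring-theoretic definitions. Once that identification is in hand, the defining equation $u^2 = 1 + j + q$ transports across the product without modification.
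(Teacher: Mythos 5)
Your proof is correct and fills in exactly the routine componentwise verification that the paper itself omits (its proof reads only ``this is rather obvious''). The identifications $U(R_1\times R_2)=U(R_1)\times U(R_2)$, $J(R_1\times R_2)=J(R_1)\times J(R_2)$ and $Nil(R_1\times R_2)=Nil(R_1)\times Nil(R_2)$ (the last valid because the product is finite, so a common nilpotency exponent exists) together with induction on the number of factors is precisely the intended argument.
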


\begin{proof}
This is rather obvious, so we omit the details.
\end{proof}

\begin{proposition}\label{3.2}
Let \(R\) be a 2-UNJ ring. If \(T\) is a factor ring of \(R\) such that all units of \(T\) lift to units of \(R\), then \(T\) is 2-UNJ too.
\end{proposition}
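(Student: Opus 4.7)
The plan is to chase the 2-UNJ identity through the quotient map. Write $T = R/I$ for the appropriate ideal $I$, and let $\pi\colon R \to T$ denote the canonical projection. First I would fix an arbitrary $\bar u \in U(T)$ and use the hypothesis that units of $T$ lift to units of $R$: this yields a unit $u \in U(R)$ with $\pi(u) = \bar u$. Since $R$ is 2-UNJ, there exist $j \in J(R)$ and $q \in Nil(R)$ with $u^2 = 1 + j + q$; applying $\pi$ gives $\bar u^2 = 1 + \pi(j) + \pi(q)$.

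The remaining task is to verify that $\pi(j) \in J(T)$ and $\pi(q) \in Nil(T)$. The nilpotent side is immediate: if $q^n = 0$ in $R$, then $\pi(q)^n = 0$ in $T$. For the radical side, I would use the standard characterization that $x \in J(R)$ iff $1 + rx \in U(R)$ for every $r \in R$. Given an arbitrary $\bar r \in T$, lift it to some $r \in R$; then $1 + rj \in U(R)$, and hence its image $1 + \bar r\,\pi(j) \in U(T)$. This shows $\pi(j) \in J(T)$, closing the argument.

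Assembling the pieces, $\bar u^2 \in 1 + J(T) + Nil(T)$, which is exactly the 2-UNJ condition for $T$. There is no real obstacle here; the one place where care is needed is recognizing \emph{why} the lifting hypothesis matters. Without it, a unit $\bar u \in U(T)$ might only admit non-unit preimages in $R$, and the definition of 2-UNJ gives no information about squares of non-units, so the argument would stall at the very first step. Once the lift is in hand, the proof reduces to the routine fact that the quotient map sends $J(R)$ into $J(T)$ and $Nil(R)$ into $Nil(T)$.
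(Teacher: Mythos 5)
Your proof is correct and follows essentially the same route as the paper's: lift the unit, apply the 2-UNJ decomposition in $R$, and push it through the surjection. You merely spell out the standard facts that a surjective ring homomorphism carries $Nil(R)$ into $Nil(T)$ and $J(R)$ into $J(T)$, which the paper takes for granted.
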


\begin{proof}
Suppose that \(f: R \rightarrow T\) is a ring epimorphism. Choosing \(v \in U(T)\), there exists \(u \in U(R)\) such that \(v = f(u)\) and \(u^2 = 1 + j+q \in 1 + J(R)+Nil(R)\). So, it must be that \[v^2 = (f(u))^2 = f(u^2) = f(1 + j+q) = 1 + f(j)+f(q) \in 1 + J(T)+Nil(T),\] as required.
\end{proof}

The following criterion is helpful as well.

\begin{proposition}\label{3.3}
A division ring \(R\) is 2-UNJ if, and only if, either \(R \cong \mathbb{Z}_2\) or \(R \cong \mathbb{Z}_3\).
\end{proposition}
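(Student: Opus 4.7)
The plan is to exploit the fact that a division ring has trivial Jacobson radical and trivial nilradical, so the 2-UNJ condition collapses to a very restrictive identity on the multiplicative group.

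First, I would observe that if $R$ is a division ring, then $J(R)=0$ and $Nil(R)=\{0\}$. Consequently, the defining equation $u^{2}=1+j+q$ with $j\in J(R)$ and $q\in Nil(R)$ forces $u^{2}=1$ for every $u\in U(R)=R\setminus\{0\}$. Thus the 2-UNJ property for a division ring is equivalent to the statement that every nonzero element is an involution.

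Next, from $u^{2}=1$ one gets $(u-1)(u+1)=0$, and because $R$ has no zero divisors, it follows that $u=1$ or $u=-1$. Hence $R\setminus\{0\}\subseteq\{1,-1\}$, so $|R|\le 3$: if $\mathrm{char}(R)=2$ then $1=-1$ and $R=\{0,1\}\cong\mathbb{Z}_{2}$, while otherwise $1\ne -1$ and $R=\{0,1,-1\}\cong\mathbb{Z}_{3}$.

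For the converse, a direct verification suffices: in $\mathbb{Z}_{2}$ the only unit is $1$ with $1^{2}=1$, and in $\mathbb{Z}_{3}$ the units $\pm 1$ both satisfy $u^{2}=1$; since $J=0=Nil$ in each case, both rings are trivially 2-UNJ (indeed they are even UNJ in the $\mathbb{Z}_{2}$ case, and this also matches Example~\ref{3.32} for $\mathbb{Z}_{3}$).

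There is no real obstacle here; the only point that requires a moment's thought is ensuring that the two summands coming from $J(R)$ and $Nil(R)$ are both forced to be zero simultaneously, which is immediate because $J(R)=0$ and the only nilpotent element in a domain is $0$.
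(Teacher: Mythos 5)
Your proof is correct, and it takes a genuinely different (and in fact more elementary) route than the paper's for the key step. Both arguments begin identically: in a division ring $J(R)=Nil(R)=(0)$, so the 2-UNJ condition collapses to $u^{2}=1$ for every non-zero $u$. From there the paper first deduces $a^{3}=a$ for all $a$, invokes Jacobson's commutativity theorem to conclude that $R$ is a field, and then counts roots of the polynomial $1-x^{2}$ over that field to get $|R^{\ast}|\le 2$. You instead factor $u^{2}-1=(u-1)(u+1)$ --- which is legitimate even without commutativity, since only powers of $u$ and the identity are involved --- and use the absence of zero divisors to conclude $u=\pm 1$ directly, so $R\subseteq\{0,1,-1\}$ and the characteristic dichotomy finishes the argument. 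Your version buys a shorter, self-contained proof that avoids a deep external theorem; the paper's version buys nothing extra here, though its $a^{3}=a$ observation foreshadows the tripotent characterizations appearing later in the paper. The only cosmetic point: in the case $1\ne -1$ you might add one line noting that $1+1$ must equal $-1$ (it cannot be $0$ or $1$), which pins down characteristic $3$ and hence $R\cong\mathbb{Z}_{3}$; this is immediate since $|R|=3$.
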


\begin{proof}
Since \(R\) is a division ring, one has that $J(R)=Nil(R)=(0)$. Consequently, for any non-zero element $a \in R$, where $R  \setminus \{0\} = U(R)$, we obtain $a^2 = 1$, thus leading to $a^3 = a$. The application of the classical Jacobson's theorem yields that $R$ is necessarily commutative.

Consider next the polynomial $f(x) = 1 - x^2 \in R[x]$. As $R$ is a field, $f(x)$ can have at most two roots in the multiplicative group $R^{\ast}$. Denoting by $A$ the collection of all roots of $f$ in $R^{\ast}$, and since $R$ is both a 2-UNJ ring and a field, we observe that, for any non-zero element $a \in R$, the equality $a^2 = 1$ holds. This establishes that $R^* = A$ and, consequently, $|R^{\ast}| = |A| \leq 2$. So, $R$ must be isomorphic to one of the finite field $\mathbb{Z}_2$ or $\mathbb{Z}_3$ either. The reverse direction of this statement is very straightforward to verify, so we drop off the arguments.
\end{proof}

\begin{remark}
The condition "all units of $T$ lift to units of $R$" in Proposition \ref{3.2} is necessary and cannot be ignored. In fact, to substantiate this, one sees that the ring $\mathbb{Z}_7$ is a factor-ring of the 2-$UNJ$ ring $\mathbb{Z}$. But, $\mathbb{Z}_7$ is {\it not} 2-$UNJ$ owing to Proposition \ref{3.3}. Note also that {\it not} all of units of $\mathbb{Z}_7$ can lift to units of $\mathbb{Z}$.
\end{remark}

We continue with a series of technicalities as follows.

\begin{proposition}\label{3.5}
(i) If \( R \) is a 2-UNJ ring, then, for any ideal \( I \) of \( R \) contained in \( J(R) \), \( R/I \) is also a 2-UNJ ring.

(ii) Let \( I \) be a nil-ideal in \( R \). If \( R/I \) is a 2-UNJ-ring, then \( R \) is a 2-UNJ ring.

(iii) If \( R \) is a 2-UNJ ring, then \( R/J(R) \) is a 2-UU ring.

(iv) If \( R \) is a 2-primal 2-UNJ ring, then \( R \) is a 2-UJ ring.
\end{proposition}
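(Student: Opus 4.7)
The plan is to prove the four parts independently. Parts (i), (iii), and (iv) each reduce to a single observation, while part (ii) is where the only real technicality lies.

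For (i), I would apply Proposition~\ref{3.2} with $T = R/I$. Its hypothesis --- that every unit of $R/I$ lifts to a unit of $R$ --- is immediate from $I \subseteq J(R)$: if $\bar u\bar v = \bar 1$ in $R/I$, then $uv \in 1+I \subseteq 1+J(R) \subseteq U(R)$, and symmetrically $vu \in U(R)$, forcing $u \in U(R)$. For (iii), I would use the standard fact that units always lift modulo $J(R)$, so each element of $U(R/J(R))$ has the form $\bar u$ for some $u \in U(R)$; reducing the 2-UNJ identity $u^2 = 1+j+q$ modulo $J(R)$ gives $\bar u^{2} = 1+\bar q$ with $\bar q \in Nil(R/J(R))$, which is precisely the 2-UU condition. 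For (iv), I would invoke the standard fact that in a 2-primal ring the set $Nil(R)$ is an ideal equal to the prime radical, which always lies in $J(R)$; hence in the decomposition $u^2 = 1+j+q$ both summands $j$ and $q$ lie in $J(R)$, so $u^2 \in 1+J(R)$, and $R$ is 2-UJ.

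Part (ii) is where the main obstacle sits, namely the careful lifting of both Jacobson and nilpotent pieces through $R/I$. Since $I$ is nil one has $I \subseteq J(R)$, whence $J(R/I) = J(R)/I$, and every unit of $R$ maps to a unit of $R/I$. Applying 2-UNJ in $R/I$ to $\bar u$ for $u \in U(R)$ yields $\bar u^{2} = 1 + \bar j + \bar q$ with $\bar j \in J(R/I)$ and $\bar q \in Nil(R/I)$. A preimage $j$ of $\bar j$ may be chosen in $J(R)$, and for any preimage $q$ of $\bar q$, the relation $\bar q^{n} = 0$ forces $q^n \in I$, whence $q^{nk} = 0$ for some $k$ by the nil-ness of $I$; thus $q \in Nil(R)$. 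The remaining discrepancy $u^2 - 1 - j - q$ lies in $I \subseteq J(R)$ and is absorbed into the Jacobson component, yielding the required representation $u^2 \in 1 + J(R) + Nil(R)$.
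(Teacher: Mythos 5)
Your proposal is correct, and for parts (i), (iii) and (iv) it follows the paper's argument essentially verbatim (the paper likewise cites Proposition~\ref{3.2} for (i), reduces the identity $u^2=1+j+q$ modulo $J(R)$ for (iii), and uses $Nil(R)=Nil_*(R)\subseteq J(R)$ for (iv)). The one genuine divergence is in part (ii), in how the error term $p := u^2-(1+j+q)\in I$ is absorbed: the paper folds $p$ into the \emph{nilpotent} summand, writing $u^2=1+(q+p)+j$, which then requires the extra verification that $(q+p)^n\in q^n+I\subseteq I$ is nilpotent because $I$ is nil; you instead fold $p$ into the \emph{Jacobson} summand, writing $u^2=1+(j+p)+q$, which needs no further argument since $I\subseteq J(R)$ and $J(R)$ is closed under addition. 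Your variant is therefore slightly shorter and avoids the paper's only nontrivial computation in that step, at no loss of generality; both versions are valid.
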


\begin{proof}
(i) This is a direct consequence of Proposition \ref{3.2}.

(ii) Assume that $u \in U(R)$ and $\overline{R}=R/I$. Thus, $\bar{u} \in \overline{R}$, so there exist $\bar{j} \in J(\overline{R})$ and $\bar{q} \in \text{Nil}(\overline{R})$ such that $\bar{u} = \bar{1} + \bar{q} + \bar{j}$.

Note that, as $I$ is a nil-ideal, we must have $I \subseteq J(R)$, which forces $J(\overline{R}) = J(R)/I$. Therefore, $j \in J(R)$.

Moreover, since $\bar{q} \in \text{Nil}(\overline{R})$, there exists $n \in \mathbb{N}$ such that $q^n \in I \subseteq \text{Nil}(R)$, and thus $q \in \text{Nil}(R)$.

Consequently, $u - (1 + q + j) \in I$, so there exists $p \in I$ such that $u = 1 + (q + p) + j$.

It just remains to establish that $q + p \in \text{Nil}(R)$. To this goal, since $q^n \in I$, we deduce $(q + p)^n = q^n + f(p,q)$, where $f(p,q) \in I$. This gives that $(q + p)^n \in I \subseteq \text{Nil}(R)$, and hence $q + p \in \text{Nil}(R)$, completing the proof.

(iii) Choose $u+J(R)\in U(R/J(R)$. Thus, $u\in U(R)$ and so $u^2=1+q+j$, where $q\in Nil(R)$ and $j\in J(R)$. Therefore, $$(u+J(R))^2=(1+q+j)+J(R)=(1+J(R))+(q+J(R)),$$ where $q+J(R)$ is a nilpotent in $R/J(R)$.

(iv) Suppose that \( R \) is a 2-primal ring. Then, the set of nilpotent elements \( Nil(R) \) coincides with the prime radical of \( R \) and, consequently, we find that \( Nil(R) \subseteq J(R) \). Now, choosing $u\in U(R)$, so by hypothesis $u^2=1+j+q$, where $j\in J(R)$ and $q\in Nil(R)\subseteq J(R)$. Finally, $u^2= 1+j'\in 1+J(R)$, as required.
\end{proof}

\begin{proposition}\label{3.8}
For any ring \( R \neq 0 \) and any integer \( n \geq 2 \), the ring \( M_n(R) \) is not a 2-UNJ ring.
\end{proposition}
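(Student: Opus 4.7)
The plan is to exhibit a specific unit $u \in M_n(R)$ for which $u^2 - 1$ provably lies outside $Nil(M_n(R)) + J(M_n(R))$. My main tool will be the identity $J(M_n(R)) = M_n(J(R))$, under which reduction modulo $J(M_n(R))$ identifies the quotient with $M_n(R/J(R))$; since $R \neq 0$ forces $R/J(R) \neq 0$, any matrix over $R/J(R)$ whose powers are all nonzero will refute nilpotence. One could equivalently invoke Proposition~\ref{3.5}(iii) to reduce the problem to showing that $M_n(R/J(R))$ is not 2-UU.

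For the witness I would take $u$ to be the block-diagonal matrix whose upper-left $2 \times 2$ block is
\[
M \;=\; \begin{pmatrix} 1 & 1 \\ 1 & 0 \end{pmatrix},
\]
with $I_{n-2}$ filling the lower-right corner. Since $\det M = -1$ is a unit of every ring with identity, $u$ lies in $U(M_n(R))$. A short computation gives the pleasant identity $M^2 - I_2 = M$, whence $u^2 - I = \operatorname{diag}(M,0,\ldots,0)$. Assuming toward a contradiction that $u^2 = I + q + j$ with $q \in Nil(M_n(R))$ and $j \in M_n(J(R))$ and reducing entrywise modulo $J(R)$, one obtains $\operatorname{diag}(\bar M,0,\ldots,0) = \bar q$, a nilpotent element of $M_n(R/J(R))$. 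But its $k$-th power equals $\operatorname{diag}(\bar M^k,0,\ldots,0)$, and $\bar M$ stays invertible in $M_2(R/J(R))$ because $\det \bar M = -1$ remains a unit there; these powers therefore never vanish, contradicting nilpotence.

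The only real obstacle is selecting the correct witness. Natural candidates such as $I + E_{12}$, $I - 2E_{11}$, or the transposition $E_{12} + E_{21} + I_{n-2}$ all satisfy $u^2 - I \in Nil(M_n(R))$ and so convey no information. The matrix $M$ above succeeds precisely because of the coincidence $M^2 - I_2 = M$, which forces $u^2 - I$ itself to contain an invertible block, a feature preserved under reduction modulo $J(R)$.
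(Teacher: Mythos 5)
Your proof is correct, and it is genuinely more self-contained than the paper's. The paper disposes of the statement in two lines: it invokes Proposition~\ref{3.5}(iii) to pass to $M_n(R)/J(M_n(R)) \cong M_n(R/J(R))$, which would have to be 2-UU, and then cites an external result (Example 2.1 of Sheibani--Chen) for the fact that a full matrix ring of size $n \geq 2$ over a nonzero ring is never 2-UU. You carry out the same reduction modulo the radical by hand (via $J(M_n(R)) = M_n(J(R))$ and the observation that homomorphic images of nilpotents are nilpotent), but then you supply an explicit witness instead of a citation: the unit $u = \operatorname{diag}(M, I_{n-2})$ with $M = \left(\begin{smallmatrix} 1 & 1 \\ 1 & 0 \end{smallmatrix}\right)$, whose identity $M^2 - I_2 = M$ forces $\bar u^2 - \bar 1$ to contain an invertible block and hence to be non-nilpotent over $R/J(R)$. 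Two small polishing points: over a noncommutative $R$ the phrase ``$\det M = -1$ is a unit'' is not literally meaningful, but since $M$ has entries in the central prime subring you can simply exhibit the integer inverse $\left(\begin{smallmatrix} 0 & 1 \\ 1 & -1 \end{smallmatrix}\right)$, valid over any ring; and you should state explicitly that $R \neq 0$ gives $1 \notin J(R)$, hence $R/J(R) \neq 0$, so that an invertible matrix over $R/J(R)$ indeed has nonzero powers. Your route buys independence from the external reference and makes visible exactly which unit obstructs the 2-UNJ property; the paper's route buys brevity at the cost of leaning on the quoted example.
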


\begin{proof}
Suppose the contrary that \( M_n(R) \) is a 2-UNJ ring. So, viewing Proposition \ref{3.5}(iii), we detect that
\( M_n(R)/J(M_n(R)) \cong M_n(R/J(R)) \) is a 2-UU ring. However, this leads to a contradiction with \cite[Example 2.1]{10}.
\end{proof}

Let us remember now that the set $\{e_{ij} : 1 \le i, j \le n\}$ of non-zero elements of $R$ is said to be a {\it system of $n^2$ matrix units}, provided that $e_{ij}e_{st} = \delta_{js}e_{it}$, where $\delta_{jj} = 1$ and $\delta_{js} = 0$ for $j \neq s$. In this case, $e := \sum_{i=1}^{n} e_{ii}$ is an idempotent of $R$ and the isomorphism $eRe \cong M_n(S)$ is valid, where $$S = \{r \in eRe : re_{ij} = e_{ij}r,~~\textrm{for all}~~ i, j = 1, 2, . . . , n\}.$$
Recall also that a ring $R$ is said to be {\it Dedekind-finite} if $ab=1$ assures that $ba=1$ for any $a,b\in R$. In other words, all one-sided inverses in such a ring are mandatory to be two-sided.

\medskip

We now come to the following statement.

\begin{proposition}\label{3.29}
Every 2-UU ring is Dedekind-finite.
\end{proposition}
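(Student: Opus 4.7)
The plan is to argue by contradiction via Jacobson's matrix-unit construction, combined with the fact from \cite[Example 2.1]{10} (also invoked in Proposition \ref{3.8}) that $M_n(S)$ fails to be 2-UU for every nonzero ring $S$ and every $n\geq 2$. Suppose $R$ is 2-UU and $ab=1$ in $R$, and set $e:=1-ba$. Routine manipulations with $ab=1$ give $e^2=e$, $ae=0$, and $eb=0$. Suppose, towards a contradiction, that $e\neq 0$; the goal is to derive that some $M_2(S)$ with $S\neq 0$ is 2-UU.

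The next step is to form the Jacobson matrix units
$$e_{11}:=e,\quad e_{12}:=ea,\quad e_{21}:=be,\quad e_{22}:=bea,$$
and to check the relations $e_{ij}e_{kl}=\delta_{jk}e_{il}$ by repeatedly using $ab=1$, $ae=0$, and $eb=0$. Since $a^{i-1}b^{i-1}=1$ gives $a^{i-1}e_{ii}b^{i-1}=e\neq 0$, each $e_{ii}$ is nonzero, and hence so is each $e_{ij}$. Setting $f:=e_{11}+e_{22}$, the standard matrix-units isomorphism yields $fRf\cong M_2(eRe)$, where $eRe\neq 0$ because $e\in eRe$ is nonzero.

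The crux is to promote the 2-UU condition from $R$ to the corner $fRf$. For any unit $v$ of $fRf$ with inverse $v^{-1}\in fRf$, the element $\widetilde v:=v+(1-f)\in R$ is a unit of $R$ with inverse $v^{-1}+(1-f)$; the verification relies on the orthogonality $v(1-f)=(1-f)v=0$ and $(1-f)^2=1-f$. The same orthogonality yields $\widetilde v^{\,2}=v^2+(1-f)$, and hence $\widetilde v^{\,2}-1=v^2-f\in fRf$. By the 2-UU hypothesis, this element lies in $Nil(R)$; since products in $fRf$ are computed exactly as in $R$, it lies in $Nil(fRf)$. Therefore $fRf\cong M_2(eRe)$ is itself 2-UU, contradicting the cited non-existence result and so forcing $e=0$, i.e., $ba=1$.

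The main obstacle I anticipate is the bookkeeping verification of the sixteen matrix-unit relations $e_{ij}e_{kl}=\delta_{jk}e_{il}$, which is routine but tedious; the corner-inheritance step and the identification $fRf\cong M_2(eRe)$ are both standard, but should be stated carefully so that the reader can audit them quickly.
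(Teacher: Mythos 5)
Your proof is correct and follows essentially the same route as the paper: assume $ab=1$, $ba\neq 1$, build the Jacobson matrix units from $e=1-ba$, identify a corner $fRf\cong M_2(eRe)$ with $eRe\neq 0$, transfer the 2-UU property to that corner, and contradict the fact that $M_n(S)$ is never 2-UU for $n\geq 2$ and $S\neq 0$. The only difference is cosmetic: the paper cites \cite[Proposition 2.1]{10}(3) for the corner-inheritance step, which you instead verify directly via $\widetilde v=v+(1-f)$, and you fix $n=2$ where the paper works with a general $n$.
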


\begin{proof}
If we assume the contrapositive that $R$ is {\it not} a Dedekind-finite ring, then there exist elements $a, b \in R$ such that $ab = 1$ but $ba \neq 1$. Assuming $e_{ij} = a^i(1-ba)b^j$ and $e =\sum_{i=1}^{n}e_{ii}$, there exists a non-zero ring $S$ such that $eRe \cong M_n(S)$. However, according to \cite[Proposition 2.1]{10}(3), $eRe$ is a 2-$UU$ ring, whence $M_n(S)$ must also be a 2-$UU$ ring, thus contradicting \cite[Example 2.1]{10}. Finally, $R$ is a Dedekind-finite ring, as stated.
\end{proof}

We slightly improve the last assertion to the following one.

\begin{proposition}\label{3.28}
Every 2-UNJ ring is Dedekind-finite.
\end{proposition}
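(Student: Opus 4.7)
The plan is to avoid re‐running the matrix‐units argument of Proposition~\ref{3.29} and instead to reduce to it via the quotient $R/J(R)$. The key observation is that in a 2-UNJ ring one does not yet have at hand a result saying that the corner $eRe$ inherits the 2-UNJ property, whereas passing to the quotient modulo $J(R)$ strips off the $J(R)$-summand in the defining relation and lands us squarely in the 2-UU setting already handled in Proposition~\ref{3.29}.

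Concretely, I would first apply Proposition~\ref{3.5}(iii) to conclude that $\overline{R}:=R/J(R)$ is a 2-UU ring, and then Proposition~\ref{3.29} to deduce that $\overline{R}$ is Dedekind-finite. Next, I would lift Dedekind-finiteness back to $R$ along the canonical epimorphism $\pi\colon R\to\overline{R}$: given $a,b\in R$ with $ab=1$, the images satisfy $\overline{a}\,\overline{b}=\overline{1}$, so Dedekind-finiteness of $\overline{R}$ forces $\overline{b}\,\overline{a}=\overline{1}$, i.e.\ $1-ba\in J(R)$.

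To close, I would use the classical observation that $e:=1-ba$ is always an idempotent whenever $ab=1$, since $(1-ba)^2 = 1-2ba+b(ab)a = 1-ba$, together with the standard fact that $J(R)$ contains no non-zero idempotent (if $e\in J(R)$ is idempotent, then $1-e\in U(R)$, and $e(1-e)=0$ forces $e=0$). Hence $ba=1$, as required.

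I do not anticipate a serious obstacle, since every ingredient is already in place. The only point worth flagging is that a direct corner-style proof in the spirit of Proposition~\ref{3.29} would require an independent verification that $eRe$ is 2-UNJ whenever $R$ is, and this is delicate: for a non-central idempotent $e$ and a nilpotent $q\in R$, the sandwiched element $eqe$ need not be nilpotent in $eRe$ in general, even though the Jacobson part behaves cleanly via $eJ(R)e=J(eRe)$. Routing through $R/J(R)$ sidesteps exactly this difficulty and makes the proof a short corollary of Propositions~\ref{3.5}(iii) and \ref{3.29}.
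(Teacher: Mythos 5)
Your proposal is correct and follows essentially the same route as the paper: both pass to $\overline{R}=R/J(R)$ via Proposition~\ref{3.5}(iii), invoke Proposition~\ref{3.29} for the 2-UU quotient, and use that a non-zero idempotent (here $1-ba$) cannot lie in $J(R)$ to transfer Dedekind-finiteness back to $R$. The only difference is presentational: the paper argues by contraposition while you lift directly, and you spell out the idempotent computation that the paper leaves implicit.
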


\begin{proof}
Suppose on the opposite claim that \( R \) is {\it not} Dedekind-finite, and choose \( a, b \in R \) such that \( ab = 1 \) and \( e = ba \neq 1 \). Then, \( 0 \neq e \notin J(R) \) and \( 1 - e \notin J(R) \). Hence, \( R/J(R) \) is not Dedekind-finite. Furthermore, since \( R \) is 2-UNJ, we have that \( R/J(R) \) is 2-UU thanks to Proposition \ref{3.5}(iii), leading to a contradiction with Proposition \ref{3.29}, as expected.
\end{proof}

As a valuable consequence, we extract:

\begin{corollary}\label{3.26} The following three items are fulfilled:

(i) A ring \( R \) is both local and 2-UNJ if, and only if, either \( R/J(R) \cong \mathbb{Z}_2 \) or \(R/J(R) \cong  \mathbb{Z}_3 \).

(ii) A semi-simple ring $R$ is 2-UNJ if, and only if, $R \cong \bigoplus_{i=1}^n R_i$, where $R_i\cong \mathbb{Z}_2 \text{ or } R_i\cong \mathbb{Z}_3$ for every index $i$.

(iii) A ring \( R \) is both semi-local and 2-UNJ if, and only if, $R/J(R) \cong \bigoplus_{i=1}^m R_i$, where $R_i \cong \mathbb{Z}_2 \text{ or } R_i\cong \mathbb{Z}_3$ for every index $i$.
\end{corollary}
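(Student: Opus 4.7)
My plan is to prove the three items sequentially, reducing (ii) to a Wedderburn--Artin decomposition combined with Propositions~\ref{3.1}, \ref{3.2}, \ref{3.3}, and~\ref{3.8}, and then reducing (iii) to (ii) via the canonical projection $R \to R/J(R)$. In each item, the forward direction is a descent from $R$ to $R/J(R)$, whereas the reverse direction is a hands-on verification that exploits the fact that every unit of $\mathbb{Z}_2$ or $\mathbb{Z}_3$ squares to $1$.

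For (i), locality of $R$ forces $R/J(R)$ to be a division ring, so Proposition~\ref{3.5}(i) yields that $R/J(R)$ is 2-UNJ and Proposition~\ref{3.3} identifies it as $\mathbb{Z}_2$ or $\mathbb{Z}_3$. Conversely, given any $u \in U(R)$, its image $\bar u \in R/J(R)$ satisfies $\bar u^{2} = \bar 1$, since $\mathbb{Z}_2^{\ast} = \{1\}$ and $\mathbb{Z}_3^{\ast} = \{1, 2\}$ both consist of elements whose squares equal $1$. Thus $u^{2} - 1 \in J(R)$, so writing $u^{2} = 1 + j + 0$ exhibits the required decomposition with $j \in J(R)$ and $0 \in Nil(R)$.

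For (ii), Wedderburn--Artin writes the semi-simple ring $R$ as $\bigoplus_{i=1}^{n} M_{k_i}(D_i)$ with each $D_i$ a division ring. Each coordinate projection $R \to M_{k_i}(D_i)$ has every unit lifting to a unit of $R$ (extend by $1$'s in the remaining coordinates), so Proposition~\ref{3.2} forces each $M_{k_i}(D_i)$ to be 2-UNJ. Proposition~\ref{3.8} then rules out $k_i \geq 2$, making each factor a division ring, and Proposition~\ref{3.3} identifies $D_i \cong \mathbb{Z}_2$ or $\mathbb{Z}_3$. The converse follows from Proposition~\ref{3.1} once we note that $\mathbb{Z}_2$ is trivially 2-UNJ and $\mathbb{Z}_3$ is 2-UNJ by Example~\ref{3.32}.

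For (iii), the forward direction is immediate: semi-locality means $R/J(R)$ is semi-simple, Proposition~\ref{3.5}(i) makes it 2-UNJ, and (ii) supplies the decomposition. Conversely, any $u \in U(R)$ projects to a unit $\bar u$ of $R/J(R) \cong \bigoplus_{i=1}^{m} R_i$, and $\bar u^{2} = \bar 1$ componentwise because each $R_i$ is $\mathbb{Z}_2$ or $\mathbb{Z}_3$; hence $u^{2} \in 1 + J(R) \subseteq 1 + J(R) + Nil(R)$. The only subtle point in the reverse directions of (i) and (iii) is that Proposition~\ref{3.5}(ii) is \emph{not} available for lifting the 2-UNJ property from $R/J(R)$ back to $R$, since $J(R)$ need not be a nil-ideal in a semi-local or local ring; we bypass this obstacle precisely by exploiting the special feature that $\mathbb{Z}_2$ and $\mathbb{Z}_3$ have $u^{2} = 1$ on all units, allowing the nilpotent summand in $u^{2} = 1 + j + q$ to be taken as $q = 0$.
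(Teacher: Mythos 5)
Your proof is correct and follows essentially the same route as the paper: Proposition~\ref{3.5}(i) plus Proposition~\ref{3.3} for the descent in (i), Wedderburn--Artin combined with Propositions~\ref{3.8} and~\ref{3.3} for (ii), and reduction to (i)--(ii) for (iii). The only difference is that for the reverse directions of (i) and (iii) the paper cites \cite[Lemma 2.5]{1} to lift the 2-UJ property from $R/J(R)$ to $R$, whereas you inline the same one-line computation (every unit of $\mathbb{Z}_2$ or $\mathbb{Z}_3$ squares to $1$, hence $u^2-1\in J(R)$), which makes your version self-contained.
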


\begin{proof}
(i) This follows automatically from a combination of Example \ref{3.3} and Proposition \ref{3.5}(i). Now, suppose that either $R/J(R) \cong \mathbb{Z}_2$ or $R/J(R) \cong \mathbb{Z}_3$. Then, $R/J(R)$ is a 2-UJ ring and, working with \cite[Lemma 2.5]{1}, one inspects that $R$ is a 2-UJ ring, insuring that $R$ is a 2-UNJ ring.

(ii) If $R$ is a semi-simple ring, then applying the classical Wedderburn-Artin theorem, we have an isomorphism $$R \cong \bigoplus_{i=1}^n M_{n_i}(D_i),$$ where each $D_i$ is a division ring. Since $R$ is 2-UNJ, Proposition \ref{3.1} ensures that each $M_{n_i}(D_i)$ must also be a 2-UNJ ring. Furthermore, employing Proposition \ref{3.8}, it follows that $n_i=1$ for all $i$, so every $D_i$ itself is 2-UNJ too. Looking at Proposition \ref{3.3}, we deduce that any $D_i$ is isomorphic to either $\mathbb{Z}_2$ or $\mathbb{Z}_3$, as requested.

Conversely, if $R$ is a finite direct sum of copies of either $\mathbb{Z}_2$ and $\mathbb{Z}_3$, then again Proposition \ref{3.3} applies together with Proposition \ref{3.1} to conclude that $R$ is 2-UNJ, as required.

(iii) Necessity is immediate from parts (i) and (ii). Now, to treat the sufficiency, suppose that $R/J(R) \cong \bigoplus_{i=1}^m R_i$, where either $R_i \cong \mathbb{Z}_2$ or $R_i \cong \mathbb{Z}_3$ for every index $i$. Thus, $R/J(R)$ is a 2-UJ ring and, invoking \cite[Lemma 2.5]{1}, $R$ is a 2-UJ ring, guaranteeing that $R$ is a 2-UNJ ring, as needed.
\end{proof}

\begin{lemma}\label{3.12}
Let \(R\) be a 2-UNJ ring. If \(J(R) = (0)\) and every non-zero right ideal of \(R\) contains a non-zero idempotent, then \(R\) is reduced.
\end{lemma}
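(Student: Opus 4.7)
The plan is to argue by contradiction. Since $J(R) = (0)$ and $R$ is 2-UNJ, Proposition~\ref{3.5}(iii) identifies $R$ with $R/J(R)$, so $R$ itself is 2-UU; equivalently, $u^{2} - 1 \in Nil(R)$ for every $u \in U(R)$. The strategy is to assume that $R$ is not reduced and to extract, from a non-zero square-zero element, a unit of $R$ that violates this constraint.

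First, two preparatory remarks. The right-ideal hypothesis already forces $R$ to be semiprime: if $I \neq (0)$ were an ideal of $R$ with $I^{2} = (0)$, then $I$ would contain a non-zero idempotent $e$, yielding the absurdity $e = e^{2} \in I^{2} = 0$. In particular, no non-zero idempotent of $R$ is nilpotent. Second, the 2-UU property of $R$ is at hand, as noted above.

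Suppose now, for contradiction, that $R$ is not reduced, and pick a non-zero nilpotent of minimal nilpotency index $n \geq 2$; replacing it by its $(n-1)$-st power, we may assume the existence of $a \in R$ with $a \neq 0$ and $a^{2} = 0$. The right ideal $aR$ is non-zero, so by hypothesis it contains a non-zero idempotent, which we write as $e = ar$ for some $r \in R$. Straightforward computation gives
\[
ae = a \cdot ar = a^{2}r = 0, \qquad ea = ara,
\]
together with $e \neq 1$ (otherwise $a = a\cdot ar = 0$) and $ea \neq 0$, since $arar = e^{2} = e \neq 0$ rules out $ara = 0$. Thus $a$ and $e$ satisfy the controlled non-commutation $ae = 0$ while $ea \neq 0$.

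The main obstacle is then to convert this asymmetry into an explicit unit $u$ with $u^{2} - 1 \notin Nil(R)$. My plan is to work in the Peirce decomposition of $R$ relative to $e$ and $f = 1 - e$, where $a$ has the form $a = (ea)f + faf$ with non-zero off-diagonal component $eaf = ea$; reapply the right-ideal hypothesis to the non-zero right ideal $(ea)R$ to harvest a further non-zero idempotent $g = (ea)s$ with $eg = g$; and assemble a unit of the form $u = 1 + \lambda$, where $\lambda$ is a small-integer combination of $e, f, g, a$ whose inverse can be written explicitly in Peirce form. The point of the construction is that $u^{2} - 1$ decomposes into a genuinely nilpotent off-diagonal part and a diagonal part containing a non-zero idempotent summand; since non-zero idempotents of $R$ are non-nilpotent by semiprimeness, and a careful Peirce-component bookkeeping rules out any cancellation between the two parts, one concludes $u^{2} - 1 \notin Nil(R)$, the desired contradiction. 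The fine tuning of the coefficients of $\lambda$ and the verification of both the invertibility of $u$ and the non-nilpotence of $u^{2} - 1$ is the technical heart of the argument; the preceding steps are routine.
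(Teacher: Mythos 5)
There is a genuine gap. Your opening moves match the paper's (pass to the 2-UU property of $R=R/J(R)$ via Proposition~\ref{3.5}(iii), assume a non-zero $a$ with $a^{2}=0$), and the computations with $e=ar$ ($ae=0$, $ea\neq 0$, $e\neq 1$) are correct. But the entire contradiction rests on producing a unit $u$ with $u^{2}-1\notin Nil(R)$, and that unit is never exhibited: you describe a \emph{plan} to assemble $u=1+\lambda$ from ``a small-integer combination of $e,f,g,a$'' and defer ``the fine tuning of the coefficients'' and the verification of invertibility and non-nilpotence to an unwritten technical step. That step is precisely where the difficulty lives, and it is not routine. Note that the obvious candidate fails: $u=1+a$ is a unit, yet $u^{2}-1=2a$ satisfies $(2a)^{2}=4a^{2}=0$, so it is nilpotent and yields no contradiction with 2-UU. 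More generally, a single square-zero element together with one or two idempotents does not obviously force a non-2-UU unit; what one really needs is a full system of $2\times 2$ matrix units sitting inside a corner of $R$, and building that from $a$ and the idempotent hypothesis is a nontrivial classical construction, not a bookkeeping exercise.

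This is exactly why the paper's proof does not attempt a hand-made unit. It invokes Levitzki's theorem (\cite[Theorem 2.1]{3}): under the hypothesis that every non-zero right ideal contains a non-zero idempotent, a non-reduced ring admits an idempotent $e$ with $eRe\cong M_{2}(T)$ for some non-trivial ring $T$. Then corners of 2-UU rings are 2-UU (\cite[Corollary 2.23]{djm}), while $M_{2}(T)$ is never 2-UU (\cite[Example 2.1]{10}), giving the contradiction. If you want to complete your route, you would essentially have to reprove the matrix-units part of Levitzki's argument and then exhibit a non-2-UU unit of $M_{2}(T)$ (e.g.\ $\left(\begin{smallmatrix}1&1\\1&0\end{smallmatrix}\right)$, whose square minus the identity is a unit, hence not nilpotent); as written, the proposal stops short of both steps.
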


\begin{proof}
Since $R$ is a 2-UNJ ring with $J(R) = (0)$, we find that $R$ is a 2-UU ring. Suppose on the reciprocity that $R$ is {\it not} reduced. Then, there exists $0 \neq a \in R$ such that $a^2 = 0$. Utilizing \cite[Theorem 2.1]{3}, there exists a non-zero idempotent $e \in R$ with $eRe \cong M_2(T)$ for some non-trivial ring $T$.

On the other hand, using \cite[Corollary 2.23]{djm}, $eRe$ is a 2-UU ring. Therefore, $M_2(T)$ is also a 2-UU ring, which obviously contradicts \cite[Example 2.1]{10}. Hence, $R$ must be reduced, as asserted.
\end{proof}

\begin{lemma} \label{sum two unit}
Let $R$ be a 2-UNJ ring and put $\overline{R} := R/J(R)$. The following two points hold:
	
(i) For any $u, v \in U(R)$, $u^2 + v \neq 1$.
	
(ii) For any $\bar{u}^2, \bar{v} \in U(\overline{R})$, $\bar{u}^2 + \bar{v} \neq \bar{1}$.
\end{lemma}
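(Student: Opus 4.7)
The plan is to reduce both parts to the elementary fact that, in any nonzero ring, a unit cannot be nilpotent: if $w \in U(S)$ and $w^n = 0$, then $0 = w^n w^{-n} = 1$, which is impossible. Note in particular that $\overline{R} = R/J(R)$ is itself nonzero, since $1_R \notin J(R)$.

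For part (i), I would argue by contradiction. Assume $u^2 + v = 1$ with $u, v \in U(R)$. Since $R$ is 2-UNJ, I can write $u^2 = 1 + q + j$ with $q \in Nil(R)$ and $j \in J(R)$, so that $v = -(q + j)$. Passing to $\overline{R}$, the image $\bar v$ is a unit (ring homomorphisms send units to units), while $\bar v = -\bar q$ is the image of a nilpotent element, hence is nilpotent in $\overline{R}$. This produces a nilpotent unit in the nonzero ring $\overline{R}$, contradicting the observation above.

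For part (ii), I would work entirely inside $\overline{R}$, rather than lifting units back to $R$ (which might fail). By Proposition \ref{3.5}(iii), $\overline{R}$ is a 2-UU ring, so for every $\bar u \in U(\overline{R})$ there exists $\bar n \in Nil(\overline{R})$ with $\bar u^2 = \bar 1 + \bar n$. One also notes that if $\bar u^2 \in U(\overline{R})$, then $\bar u \in U(\overline{R})$, because any element whose square is a unit is itself both left and right invertible. If now $\bar u^2 + \bar v = \bar 1$ with $\bar v \in U(\overline{R})$, substitution gives $\bar v = -\bar n$, which would be simultaneously a unit and nilpotent in $\overline{R}$, again impossible.

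The argument is largely mechanical, so I do not expect a serious obstacle; the only small points worth flagging are the justification that $\overline{R} \neq 0$ (so the unit-versus-nilpotent exclusion is genuinely applicable in the quotient) and the decision in (ii) to invoke Proposition \ref{3.5}(iii) directly instead of trying to pull $\bar u$ and $\bar v$ up to units of $R$.
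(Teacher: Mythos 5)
Your proof is correct. It rests on the same two basic facts as the paper's argument --- the image of a nilpotent element is nilpotent, and a nonzero ring has no nilpotent units --- but you organize things differently: you push everything into $\overline{R}=R/J(R)$ and derive the contradiction there, whereas the paper stays in $R$ and uses the absorption $U(R)+J(R)\subseteq U(R)$ to conclude that the nilpotent part $q$ of $u^2-1$ would itself have to be a unit of $R$. In part (ii) the contrast is sharper: the paper lifts $\bar{u}$ and $\bar{v}$ to units of $R$ and repeats the computation upstairs, while you invoke Proposition \ref{3.5}(iii) to make $\overline{R}$ a 2-UU ring and never leave the quotient; your route is arguably cleaner, and your explicit observation that $\bar{u}^2\in U(\overline{R})$ forces $\bar{u}\in U(\overline{R})$ is a point the paper glosses over. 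One small correction: your parenthetical worry that lifting units back to $R$ ``might fail'' is unfounded. If $\bar{a}\in U(\overline{R})$ and $b$ is any preimage of $\bar{a}^{-1}$, then $ab\in 1+J(R)\subseteq U(R)$ and $ba\in 1+J(R)\subseteq U(R)$, so every preimage $a$ is a unit of $R$; this standard fact is precisely what legitimizes the paper's ``without loss of generality'' in (ii). Finally, your remark that $\overline{R}\neq 0$ (so the unit-versus-nilpotent exclusion applies in the quotient) is a useful detail the paper leaves implicit.
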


\begin{proof}
(i) Suppose on the reverse $u^2 + v = 1$. Since $R$ is a 2-$UNJ$ ring, there exists $q \in Nil(R)$ and $j\in J(R)$ such that $u^2 = 1 + q+j$. Then, $1 = u^2 + v = 1 + q +j+ v$, which means $q\in U(R)$, and hence $q=0$. Thus, $j\in U(R)$, which is impossible.
	
(ii) Suppose on the reverse $\overline{u}^2 + \overline{v} = \overline{1}$. We, without loss of generality, may assume $u, v \in U(R)$, whence $u^2 + v - 1 \in J(R)$. On the other side, as $R$ is a 2-$UNJ$ ring, there exist $q \in Nil(R)$ and $j\in J(R)$ such that $u^2 = 1 + q+j$, giving that $q +j+ v \in J(R)$. Therefore, $q \in U(R) + J(R) \subseteq U(R)$, again a false.
\end{proof}

\begin{lemma}\label{exe}
Let $R$ be a potent 2-UNJ ring and $\overline{R} = R/J(R)$. The next two issues are true:
	
(i) For any idempotent $\bar{e} \in \overline{R}$ and units $\bar{u}, \bar{v} \in U(\bar{e}\overline{R}\bar{e})$, we have $\bar{u}^2 + \bar{v} \neq \bar{e}$.
	
(ii) For any $n>1$, there are no idempotents $\bar{e} \in \overline{R}$ for which $\bar{e}\overline{R}\bar{e}$ is isomorphic to $M_n(S)$ for any (non-zero) ring $S$.
\end{lemma}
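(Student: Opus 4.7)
The plan is to prove (i) by reducing to the corner ring $S := \bar{e}\overline{R}\bar{e}$ of $\overline{R} = R/J(R)$, showing that $S$ is itself 2-UU, and then applying Lemma \ref{sum two unit}(i) directly to $S$. By Proposition \ref{3.5}(iii) I already know that $\overline{R}$ is 2-UU. To transport this property down to the corner, I would use the standard trick: for any $\bar{u} \in U(S)$ the element $\bar{u} + (\bar{1}-\bar{e})$ lies in $U(\overline{R})$ (with inverse $\bar{u}^{-1} + (\bar{1}-\bar{e})$), and the orthogonality $\bar{u}(\bar{1}-\bar{e}) = (\bar{1}-\bar{e})\bar{u} = 0$ forces
$$(\bar{u} + (\bar{1}-\bar{e}))^2 = \bar{u}^2 + (\bar{1}-\bar{e}).$$
The 2-UU property of $\overline{R}$ then rewrites the right-hand side as $\bar{1} + \bar{q}$ for some $\bar{q} \in Nil(\overline{R})$, so $\bar{u}^2 - \bar{e} = \bar{q} \in S$ is nilpotent in $\overline{R}$ and therefore nilpotent in $S$ as well. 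This gives that $S$ is 2-UU.

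Since $S$ is 2-UU (and so 2-UNJ) with multiplicative identity $\bar{e}$, Lemma \ref{sum two unit}(i) applied to $S$ yields $\bar{u}^2 + \bar{v} \neq \bar{e}$ for all $\bar{u}, \bar{v} \in U(S)$, which is exactly part (i). For part (ii) I will argue by contradiction: if some idempotent $\bar{e}$ were to satisfy $\bar{e}\overline{R}\bar{e} \cong M_n(S)$ with $n > 1$ and $S \neq 0$, then by the reduction just carried out the corner is 2-UU, and this property transports across the isomorphism to $M_n(S)$, directly contradicting \cite[Example 2.1]{10}, which rules out the 2-UU property for matrix rings of size $\geq 2$ over a nonzero ring.

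The step I expect to be the main obstacle is confirming that the 2-UU property genuinely descends to the corner subring $\bar{e}\overline{R}\bar{e}$ — specifically, verifying that an element of the corner which is nilpotent inside the ambient ring $\overline{R}$ must also be nilpotent inside the subring $\bar{e}\overline{R}\bar{e}$. This is resolved by the observation that powers of any element of the corner coincide whether computed inside the corner or inside $\overline{R}$, so the technicality is short; but it is the only nontrivial piece of the argument, since everything else is a direct application of results already in the paper. Notice that my plan does not actually invoke the potent hypothesis on $R$; this assumption presumably supports later developments rather than this particular lemma.
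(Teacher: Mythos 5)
Your proposal is correct and follows essentially the same route as the paper: both reduce to showing the corner $\bar{e}\overline{R}\bar{e}$ of the 2-UU ring $\overline{R}$ is itself 2-UU and then derive the contradiction $\bar{q}\in U(\bar{e}\overline{R}\bar{e})$; the only difference is that you prove the corner-ring transfer by hand (correctly), whereas the paper simply cites \cite[Proposition 2.1(3)]{10}, and you route the final step through Lemma \ref{sum two unit}(i) instead of repeating its two-line argument inline. Your observation that the potency hypothesis is not needed is consistent with the paper's own proof, which likewise never uses it.
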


\begin{proof}
(i) Since \( R \) is 2-UNJ, it must be that \( \overline{R} \) is 2-UU in accordance with Proposition \ref{3.5}(iii), and so \( \bar{e}\overline{R}\bar{e} \) is 2-UU referring to \cite[Proposition 2.1]{10}(3). Suppose that \( \bar{u}^2 + \bar{v} = \bar{e} \). Since \( \bar{e}\overline{R}\bar{e} \) is 2-UU, there exists \( \bar{q} \in Nil(\bar{e}\overline{R}\bar{e}) \) such that \( \bar{u}^2 = \bar{e} + \bar{q} \). So, \( \bar{e} = \bar{e} + \bar{q} + \bar{v} \), and hence \( \bar{q} \in U(\bar{e}\overline{R}\bar{e}) \), an absurd.
	
(ii) In contrast with the claimed, suppose there exists \( \bar{e} \in \overline{R} \) such that \(\bar{e}\overline{R}\bar{e} \cong M_n(S)\) for some non-zero ring \( S \). Since \( R \) is 2-UNJ, it follows at once that \( \overline{R} \) is 2-UU, whence \( \bar{e}\overline{R}\bar{e} \) is also 2-UU. Consequently, \( M_n(S) \) is 2-UU, which is an impossibility.
\end{proof}

\section{Main Results}

In this key section, we systematically develop the core theoretical framework of 2-UNJ rings: in fact, we establish several deep characterizations, reveal their intrinsic connections with well-known classes of rings such as potent, tripotent, regular and exchange rings, as well as, moreover, we delineate precise conditions under which a ring exhibits the 2-UNJ property. The results obtained here not only consolidate the foundational role of 2-UNJ rings within contemporary ring theory, but also open new avenues for further algebraic exploration in different aspects. The proofs are constructed with careful attention to both generality and structural insight, ensuring a comprehensive understanding of this emerging class.

We remember that a ring $R$ is {\it semi-potent} if every one-sided ideal not contained in $J(R)$ contains a non-zero idempotent.

\medskip

Our first chief results states thus.

\begin{theorem}\label{semipotent}
Let $R$ be a semi-potent ring. The following five claims are equivalent:
	
(i) $R$ is a $2$-UNJ ring.

(ii) $R/J(R)$ is a $2$-UNJ ring.
	
(iii) $R/J(R)$ is a tripotent ring.
	
(iv) $R$ is a $2$-UJ ring.
	
(v) $R/J(R)$ is a $2$-UU ring.
\end{theorem}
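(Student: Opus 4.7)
The plan is to prove the five conditions equivalent via the cycle
$(iv) \Rightarrow (i) \Rightarrow (ii) \Rightarrow (v) \Rightarrow (iii) \Rightarrow (iv)$,
where four of the five steps are essentially routine and the substantive content is concentrated in $(v) \Rightarrow (iii)$.

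For the easy implications: $(iv) \Rightarrow (i)$ is immediate since every 2-UJ ring is 2-UNJ. For $(i) \Rightarrow (ii)$, I would apply Proposition \ref{3.5}(i) with $I = J(R)$. The step $(ii) \Rightarrow (v)$ uses the fact that $J(R/J(R)) = 0$, so being 2-UNJ in $R/J(R)$ means every unit $\overline{u}$ satisfies $\overline{u}^2 = \overline{1} + \overline{q}$ with $\overline{q}$ nilpotent, i.e.\ 2-UU. Finally, for $(iii) \Rightarrow (iv)$: if $R/J(R)$ is tripotent, then for every $u \in U(R)$ the image $\overline{u}$ is a unit with $\overline{u}^3 = \overline{u}$, so cancelling by $\overline{u}$ gives $\overline{u}^2 = \overline{1}$, i.e.\ $u^2 - 1 \in J(R)$ and $R$ is 2-UJ.

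For the hard step $(v) \Rightarrow (iii)$, I would first transfer semi-potence from $R$ to $\overline{R} := R/J(R)$: any non-zero one-sided ideal of $\overline{R}$ lifts to a one-sided ideal of $R$ strictly containing $J(R)$, and the idempotent furnished by semi-potence of $R$ remains non-zero modulo $J(R)$ because no non-zero idempotent lies in the Jacobson radical. Hence every non-zero one-sided ideal of $\overline{R}$ contains a non-zero idempotent, and since $\overline{R}$ is 2-UU (in particular 2-UNJ) with $J(\overline{R}) = 0$, Lemma \ref{3.12} applies to give that $\overline{R}$ is reduced. In particular every idempotent of $\overline{R}$ is central, and for every unit $\overline{u}$ the 2-UU relation $\overline{u}^2 = \overline{1} + \overline{q}$ with $\overline{q}$ nilpotent collapses via reducedness to $\overline{u}^2 = \overline{1}$.

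It remains to extend this from units to all elements, i.e.\ to establish $a^3 = a$ for every $a \in \overline{R}$, and this is the main obstacle. My strategy is to show that $\overline{R}$ is strongly regular. Given $0 \neq a$, semi-potence produces a central idempotent $e = ab \in a\overline{R}$; combining reducedness with the 2-UU inheritance in the corner ring $e\overline{R}e = e\overline{R}$ from \cite[Proposition 2.1]{10} and the exclusion of $M_n(S)$ sub-corners (the argument in Lemma \ref{exe}(ii), which uses only 2-UNJ-ness of the ambient ring), one should be able to force $a(1-e) = 0$, yielding $a = ae = a^2 b$. Once $\overline{R}$ is strongly regular, each $a \in \overline{R}$ acts as a unit inside its central support corner $e\overline{R}$, so that $a^2 = e$ in $e\overline{R}$ (the corner is 2-UU and reduced), and therefore $a^3 = ae = a$ in $\overline{R}$, delivering tripotence. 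The principal technical hurdle will be the strongly regular step, specifically the controlled orthogonalisation of idempotents from semi-potence and reducedness into a genuine witness $b$ satisfying $a = a^2 b$.
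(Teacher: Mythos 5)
Your cycle $(iv)\Rightarrow(i)\Rightarrow(ii)\Rightarrow(v)\Rightarrow(iii)\Rightarrow(iv)$ is organised correctly, the four routine implications are all fine, and the reduction of the hard step to showing that $\overline{R}=R/J(R)$ is reduced via the transfer of semi-potence and Lemma \ref{3.12} matches what the paper does. The gap is exactly where you flag it: the passage from ``$\overline{u}^2=\overline{1}$ for units'' to ``$a^3=a$ for all $a$'' via strong regularity is not carried out, and the route you sketch does not close. A single application of semi-potence to $a$ gives one non-zero idempotent $e=ab\in a\overline{R}$, but there is no reason why $a(1-e)=0$ for that $e$; if $a(1-e)\neq 0$ you can only extract a further idempotent $f\in a(1-e)\overline{R}$ orthogonal to $e$ and replace $e$ by $e+f$, and since no chain condition is assumed this orthogonalisation need not terminate. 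The corner-exclusion argument of Lemma \ref{exe}(ii) rules out matrix corners but gives no handle on the annihilator of $a$, so ``one should be able to force $a(1-e)=0$'' is precisely the missing idea, not a technicality.

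The paper avoids strong regularity altogether by arguing by contradiction on the element $a-a^3$ itself. Supposing $a-a^3\neq 0$ in $\overline{R}$, semi-potence yields a non-zero idempotent $e=(a-a^3)b$, central because $\overline{R}$ is reduced. Writing $e=ea(1-a^2)b=e(1-a^2)ab$ and using Dedekind-finiteness of the 2-UU corner $e\overline{R}e$ (Propositions \ref{3.29} and \ref{3.28}), the commuting elements $ea$ and $e(1-a^2)$ are units of $e\overline{R}e$; but then
\begin{equation*}
(ea)^2+e(1-a^2)=e
\end{equation*}
contradicts Lemma \ref{exe}(i), which forbids a relation $\bar{u}^2+\bar{v}=\bar{e}$ with $\bar{u},\bar{v}$ units of a corner of a 2-UU ring. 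This is the device you are missing: rather than building an idempotent decomposition of $a$, apply semi-potence once to the defect $a-a^3$ and let the unit equation in the corner do the work. I recommend replacing your strong-regularity subgoal with this contradiction argument (or supplying an independent proof of the orthogonalisation step, which would amount to reproving that reduced semi-potent 2-UU rings with zero radical are strongly regular --- a statement that, as far as the paper's toolkit goes, is only obtained as a consequence of tripotence, not as a stepping stone to it).
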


\begin{proof}
The implications (i) $\Rightarrow$ (ii), (iv) $\Rightarrow$ (i), (iv) $\Rightarrow$ (v) and (v) $\Rightarrow$ (ii) are quite obvious, so we remove the details.
	
(ii) $\Rightarrow$ (iii). Having in mind Lemma \ref{3.12}, we can conclude that $R/J(R)$ is reduced and hence abelian. Now, suppose there exists $a \in R/J(R)$ such that $a-a^3 \neq 0$ in $R/J(R)$. Since $R/J(R)$ is semi-potent, there exists $e=e^2 \in R/J(R)$ with $e \in (a-a^3)R/J(R)$. We thus get $e=(a-a^3)b$ for some $b \in R/J(R)$.
	
But, as $R/J(R)$ is abelian, we have:
\begin{equation*}
		e = ea(1-a^2)b = e(1-a^2)ab.
\end{equation*}
Thus, with Proposition \ref{3.28} at hand, we infer $ea, e(1-a^2) \in U(eR/J(R)e)$. 

On the other hand, we know that
\begin{equation*}
		(ea)^2 + e(1-a^2) = e. \qquad (*)
\end{equation*}
So, equation $\left( \ast \right)$ contradicts Lemma \ref{exe}, whence $R/J(R)$ is tripotent.
	
(iii) $\Rightarrow$ (iv). Let $u \in U(R)$. Then, in virtue of (ii), we have $u-u^3 \in J(R)$. Moreover, since $J(R)$ is an ideal and $u$ is a unit, we can write $1-u^2 \in J(R)$. Therefore, $R$ is a 2-UJ ring.
\end{proof}

Recall once again that a ring $R$ is called {\it $\pi$-regular} if, for each $a\in R$, the relation $a^n\in a^nRa^n$ holds for some integer $n\ge1$. Thus, regular rings are always $\pi$-regular. Also, a ring $R$ is said to be {\it strongly $\pi$-regular}, provided that, for any $a\in R$, there exists $n\ge 1$ such that $a^n\in a^{n+1}R$.

\medskip

Our next main result is stated as follows.

\begin{theorem}\label{3.13}
Let \(R\) be a ring. Then, the following three assertions are equivalent:
\begin{enumerate}
\item
\(R\) is a regular 2-UNJ ring.
\item
\(R\) is a \(\pi\)-regular reduced 2-UNJ ring.
\item
\(R\) is a tripotent ring.
\end{enumerate}
\end{theorem}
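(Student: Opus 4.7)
The plan is to establish the cycle $(3) \Rightarrow (1) \Rightarrow (2) \Rightarrow (3)$.

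For $(3) \Rightarrow (1)$, if every $a \in R$ satisfies $a^3 = a$, then taking $x = a$ in $axa = a$ gives regularity; also $a^2 = 0$ forces $a = a^3 = 0$, so $R$ is reduced; and for any unit $u$, multiplying $u^3 = u$ by $u^{-1}$ yields $u^2 = 1$, so the 2-UNJ condition holds trivially with $j = q = 0$.

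For $(1) \Rightarrow (2)$, a regular ring is automatically $\pi$-regular and satisfies $J(R) = 0$. Since every nonzero principal right ideal of a regular ring has the form $eR$ for a nonzero idempotent $e$, all hypotheses of Lemma \ref{3.12} are in force, so $R$ is reduced.

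The main work is $(2) \Rightarrow (3)$. The first step upgrades \emph{reduced $\pi$-regular} to \emph{strongly regular}: given $a^n = a^n x a^n$, the element $e := a^n x$ is idempotent, and central because $R$ is reduced; then $a^n(1-e) = 0$ together with centrality of $e$ yields $(a(1-e))^n = 0$, and reducedness forces $a(1-e) = 0$, whence $a = ae = a^{n+1}x \in a^2 R$. Consequently each $a \in R$ admits the (standard) group inverse $b$ satisfying $aba = a$, $bab = b$ and $ab = ba$. Setting $e := ab = ba$, one checks that $e^2 = e$, $e$ is central, $ae = ea = a$, and $be = eb = b$.

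The crucial step introduces the unit $u := a + (1-e)$, whose two-sided inverse is $b + (1-e)$ (a short computation using the relations above). Expanding,
\begin{equation*}
u^2 = a^2 + a(1-e) + (1-e)a + (1-e)^2 = a^2 + 1 - e,
\end{equation*}
since $a(1-e) = (1-e)a = 0$. By the 2-UNJ hypothesis, $u^2 \in 1 + Nil(R) + J(R)$, but $J(R) = 0$ by regularity and $Nil(R) = 0$ by reducedness, so $u^2 = 1$; comparing the two expressions gives $a^2 = e$, and finally $a^3 = a^2 \cdot a = ea = a$, so $R$ is tripotent. I expect the main obstacle to be the choice of $u$ so that its square cleanly isolates $a^2$: once both radicals vanish, the 2-UNJ condition degenerates to exactly $u^2 = 1$, which is precisely what the argument needs.
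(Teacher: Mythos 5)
Your proof is correct, and the overall skeleton (the cycle $(1)\Rightarrow(2)\Rightarrow(3)\Rightarrow(1)$, with $(1)\Rightarrow(2)$ via Lemma \ref{3.12} and $(3)\Rightarrow(1)$ by the trivial computation $u^3=u\Rightarrow u^2=1$) coincides with the paper's. The genuine difference is in $(2)\Rightarrow(3)$. The paper gets there by citing two external results: Badawi's lemma to pass from abelian $\pi$-regular to strongly $\pi$-regular with $J(R)=Nil(R)=0$, and Diesl's proposition to decompose each $x$ as $x=e+u$ with $e$ idempotent, $u$ a unit and $ex=xe\in Nil(R)=0$, whence $x=u(1-e)$, $u^2=1$, $x^2=1-e$ and $x^3=x$. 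You instead give a self-contained upgrade of reduced $\pi$-regular to strongly regular (the idempotent $a^nx$ is central by reducedness, and $(a(1-a^nx))^n=0$ forces $a\in a^2R$), then use the group inverse $b$ of $a$ and the explicit unit $u=a+(1-e)$ with $e=ab=ba$; since $a(1-e)=(1-e)a=0$, the square $u^2=a^2+1-e$ must equal $1$, giving $a^2=e$ and $a^3=a$. Both routes hinge on the same observation that once $J(R)=Nil(R)=0$ the 2-UNJ condition collapses to $u^2=1$ for all units; your version trades the two citations for a short elementary argument and a slightly different unit (attached to $a$ itself rather than coming from a nil-clean-type decomposition), which makes the proof more self-contained at the cost of a few extra lines.
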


\begin{proof}
(i) \(\Rightarrow\) (ii). Since \(R\) is regular, it must be that \(J(R) = (0)\), and besides every non-zero right ideal contains a non-zero idempotent. Thus, in virtue of Lemma \ref{3.12}, \(R\) is reduced. Likewise, every regular ring is \(\pi\)-regular.

(ii) \(\Rightarrow\) (iii). Notice that reduced rings are always abelian, so \(R\) is strongly \(\pi\)-regular and also $J(R)=Nil(R)=(0)$ by virtue of \cite[Lemma 5]{4}.

Choose \(x \in R\). Bearing in mind \cite [Proposition 2.5]{5}, there are an idempotent \(e \in R\) and a unit \(u \in R\) such that \(x = e + u\) and \(ex = xe \in Nil(R) = (0)\). So, we derive \[x = x - xe = x(1-e) = u(1-e) = (1-e)u.\] But, since \(R\) is a 2-$UNJ$ ring, \(u^2 = 1\). It now follows that \(x^2 = (1-e)\). Hence, \(x = x(1-e) = x.x^2 = x^3\).

(iii) \(\Rightarrow\) (i). It is not too hard to inspect that \(R\) is regular. Choosing \(u \in U(R)\), we obtain \(u^3 = u\), i.e., \(u^2 = 1\), and so\(R\) is a 2-$UNJ$ ring, as asserted.
\end{proof}

Now, we have everything in hand to list the following consequence.

\begin{corollary}\label{3.14}
The following four statements are equivalent for a ring \(R\):
\begin{enumerate}
\item
\(R\) is a regular 2-UNJ ring.
\item
 \(R\) is a strongly regular 2-UNJ ring.
\item
\(R\) is a unit-regular 2-UNJ ring.
\item
\(R\) is a tripotent ring.
\end{enumerate}
\end{corollary}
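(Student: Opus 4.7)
The plan is to anchor everything on Theorem \ref{3.13}, which already secures the equivalence (i) $\Leftrightarrow$ (iv), and then slot (ii) and (iii) into the cycle. Since strong regularity and unit regularity are both classically stronger than von Neumann regularity, the implications (ii) $\Rightarrow$ (i) and (iii) $\Rightarrow$ (i) are immediate: in either case $R$ is regular, and the 2-UNJ hypothesis is carried along unchanged.

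For the remaining direction, I would deduce (i) $\Rightarrow$ (ii) by going through (iv). By Theorem \ref{3.13}, a regular 2-UNJ ring is tripotent, so every $a \in R$ satisfies $a = a^3 = a^2 \cdot a \in a^2 R$; this is exactly the definition of strong regularity, giving (i) $\Rightarrow$ (ii). To obtain (ii) $\Rightarrow$ (iii), I would invoke the well-known classical fact that every strongly regular ring is unit-regular. Alternatively, one can argue directly from Theorem \ref{3.13}: the hypothesis forces $R$ to be reduced, hence abelian, and a reduced von Neumann regular ring is automatically unit-regular because every idempotent is central, so any inner inverse can be adjusted to a unit.

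I do not anticipate a substantive obstacle here; the argument is essentially a repackaging of Theorem \ref{3.13} with the classical hierarchy \emph{strongly regular} $\Rightarrow$ \emph{unit-regular} $\Rightarrow$ \emph{regular} grafted on. The only care needed is to avoid circular citations and to make sure the 2-UNJ property is carried along at each step so that the bi-conditionals truly land in the desired class.
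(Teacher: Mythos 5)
Your proposal is correct and matches the paper's intent: the corollary is stated there without proof as an immediate consequence of Theorem \ref{3.13}, and your argument — combining that theorem's equivalence of (i) and (iv) with the classical chain strongly regular $\Rightarrow$ unit-regular $\Rightarrow$ regular, plus the observation that tripotency gives $a = a^2\cdot a \in a^2R$ — is exactly the intended derivation.
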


Imitating Ko\c{s}an et al. (cf. \cite {17}), a ring $R$ is called {\it semi-tripotent} if, for each $a\in R$, $a=e+j$, where $e^3=e$ and $j\in J(R)$ (or, equivalently, $R/J(R)$ satisfies the identity $x^3=x$ and all idempotents that lift modulo $J(R)$).

\medskip

As three important consequences, we record the following corollaries:

\begin{corollary}\label{3.16}
Let \(R\) be a ring. Then, the following three conditions are equivalent:
\begin{enumerate}
\item
\(R\) is a semi-regular 2-UNJ ring.
\item
\(R\) is an exchange 2-UNJ ring.
\item
\(R\) is a semi-tripotent ring.
\end{enumerate}
\end{corollary}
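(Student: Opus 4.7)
The plan is to prove the cyclic chain (i) $\Rightarrow$ (ii) $\Rightarrow$ (iii) $\Rightarrow$ (i), with Theorem \ref{semipotent} doing the heavy lifting in the middle step. The implication (i) $\Rightarrow$ (ii) is immediate, since every semi-regular ring is exchange and the 2-UNJ hypothesis is retained trivially.

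For (ii) $\Rightarrow$ (iii), I would first invoke the classical fact (due to Nicholson) that every exchange ring is semi-potent, i.e. every one-sided ideal not contained in $J(R)$ contains a non-zero idempotent. Combined with the 2-UNJ hypothesis, Theorem \ref{semipotent} applies and yields that $R/J(R)$ is tripotent, meaning $x^3=x$ for every $x\in R/J(R)$. Since exchange rings admit the lifting of idempotents modulo $J(R)$, this tripotent identity together with idempotent lifting is precisely the defining conjunction of semi-tripotent, so (iii) follows.

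For (iii) $\Rightarrow$ (i), I would verify semi-regularity and the 2-UNJ condition separately. Semi-tripotency forces $R/J(R)$ to satisfy $x\cdot x\cdot x=x$, which directly exhibits $R/J(R)$ as von Neumann regular; combined with the lifting of idempotents, this is exactly semi-regularity of $R$. For the 2-UNJ property, pick $u\in U(R)$ and write $u=e+j$ with $e^3=e$ and $j\in J(R)$. Since $1+J(R)\subseteq U(R)$, the element $v:=u-j=e$ lies in $U(R)$ and satisfies $v^3=v$; cancelling the unit $v$ on the left gives $v^2=1$. Expanding $u^2=(v+j)^2 = v^2+vj+jv+j^2$, one finds $u^2-1 = vj+jv+j^2\in J(R)$, so $u^2\in 1+J(R)\subseteq 1+Nil(R)+J(R)$. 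In fact this shows the stronger conclusion that $R$ is 2-UJ, and \emph{a fortiori} 2-UNJ.

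The main obstacle is the opening move in (ii) $\Rightarrow$ (iii): I need to recognise that the exchange hypothesis supplies semi-potence in order to activate Theorem \ref{semipotent}; once this is in place the rest is routine. A pleasant by-product of the argument is that condition (iii) automatically upgrades the 2-UNJ property to the stronger 2-UJ property, reflecting the rigidity imposed by the identity $x^3=x$ on the group of units.
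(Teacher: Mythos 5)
Your proposal is correct and follows exactly the route the paper intends: the paper states this corollary without proof as an immediate consequence of Theorem \ref{semipotent}, relying on the standard facts that exchange rings are semi-potent with idempotents lifting modulo $J(R)$ and that a tripotent quotient $R/J(R)$ is regular. Your closing observation that a unit $u=e+j$ with $e^3=e$ forces $e\in U(R)$, $e^2=1$ and hence $u^2\in 1+J(R)$ (so $R$ is in fact 2-UJ) is a correct and worthwhile explicit verification of the implication (iii) $\Rightarrow$ (i), consistent with the equivalence (i) $\Leftrightarrow$ (iv) already recorded in Theorem \ref{semipotent}.
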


\begin{corollary}\label{3.17}
Let \( R \) be a 2-UNJ ring. Then, the following three items are equivalent:
\begin{enumerate}
\item
\( R \) is a semi-regular ring.
\item
\( R \) is an exchange ring.
\item
\( R \) is a clean ring.
\end{enumerate}
\end{corollary}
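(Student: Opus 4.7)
The plan is to piggyback on Corollary~\ref{3.16}: under the standing 2-UNJ hypothesis, that corollary already identifies semi-regularity with the exchange property, both being equivalent to semi-tripotency. So the content of Corollary~\ref{3.17} is merely to weave cleanness into this chain of equivalences.

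The implication (iii)$\Rightarrow$(ii) is classical and requires no reference to 2-UNJ: every clean ring is exchange, as recalled in the introduction (see \cite[Proposition 1.8]{8}). The equivalence (i)$\Leftrightarrow$(ii) is inherited directly from Corollary~\ref{3.16}.

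For the remaining step (ii)$\Rightarrow$(iii), I would argue as follows. Corollary~\ref{3.16} upgrades the hypothesis to semi-tripotency of $R$, so each $a\in R$ decomposes as $a=e+j$ with $e^3=e$ and $j\in J(R)$. To refine this into a clean decomposition, I would exploit the identity $e=(1-e^2)+(e+e^2-1)$: here $1-e^2$ is manifestly idempotent, and an elementary expansion (using $e^3=e$ and hence $e^4=e^2$) yields $(e+e^2-1)^2=1$, so $u:=e+e^2-1$ is a self-inverse unit. Therefore $a=(1-e^2)+(u+j)$, and since $u\in U(R)$ and $j\in J(R)$, the sum $u+j$ still lies in $U(R)$, exhibiting $a$ as an idempotent plus a unit.

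The only genuine subtlety is the decomposition of a tripotent element as idempotent plus unit without any characteristic hypothesis (in particular, without assuming $2$ is invertible, which would afford the easier split $e=(e^2+e)/2-(e^2-e)/2$ into orthogonal idempotents). The identity above dispatches this uniformly, and the corollary then falls out as a direct consequence of Corollary~\ref{3.16}.
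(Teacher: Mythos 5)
Your proof is correct. Note that the paper states Corollary~\ref{3.17} with no proof at all, presenting it as a consequence of Theorem~\ref{semipotent}; the route it implicitly intends is almost certainly to note that an exchange ring is semi-potent, so Theorem~\ref{semipotent} makes $R$ a $2$-UJ ring, and then to invoke the result of \cite{1} (recalled in the introduction) that semi-regular, exchange and clean are all equivalent for $2$-UJ rings. Your argument takes a more self-contained route: you pull semi-tripotency out of Corollary~\ref{3.16} and then manufacture the clean decomposition by hand, writing $a=e+j$ with $e^3=e$, $j\in J(R)$, and splitting $a=(1-e^2)+\bigl((e+e^2-1)+j\bigr)$. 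The two computations you rely on both check out: $e^4=e^2$ gives $(1-e^2)^2=1-e^2$, and $(e+e^2-1)^2=e^2+e^4+1+2e^3-2e^2-2e=1$, so $(e+e^2-1)+j$ is a unit; and, as you observe, this identity works in arbitrary characteristic, avoiding any division by $2$. What your approach buys is an explicit idempotent-plus-unit expression and independence from the external reference \cite{1}; what the citation approach buys is brevity. Two small caveats: Corollary~\ref{3.16} is itself stated without proof, so your argument ultimately rests on Theorem~\ref{semipotent} exactly as the paper's does (which is fine, but worth being aware of); and \cite[Proposition~1.8]{8} is the reference for the converse implication in the abelian case, not for the standard fact that clean implies exchange --- the latter is elementary and also in \cite{8}, so this is only a citation quibble, not a gap.
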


\begin{corollary}\label{3.18}
Let \( R \) be a ring. Then, the following two issues are equivalent:
\begin{enumerate}
\item
\( R \) is an exchange 2-UNJ ring and \(J(R)\) is nil.
\item
\( R \) is a strongly $2$-nil-clean ring.
\end{enumerate}
\end{corollary}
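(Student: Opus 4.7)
The plan is to establish the equivalence by proving the two implications separately, leveraging Corollary~\ref{3.16} for the harder direction and the Sheibani--Chen characterization of strongly 2-nil-clean rings (recalled in the introduction) for the easier one.

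For $(2) \Rightarrow (1)$, I would invoke the cited result \cite{10} that a ring is strongly 2-nil-clean if and only if it is simultaneously exchange and 2-UU. Since every 2-UU ring is 2-UNJ by Example~\ref{con xemple}(2), this immediately yields that $R$ is an exchange 2-UNJ ring. To verify that $J(R)$ is nil, I would pick $j \in J(R)$ and use the strongly 2-nil-clean decomposition to write $j = t + q$ with $t^{3}=t$, $q \in Nil(R)$, and $tq = qt$. Then $t = j - q$ sits in the commutative subring generated by $j$ and $q$; combining $t(1-t)(1+t)=0$ with $j \in J(R)$ forces $t=0$ (any non-zero idempotent piece of $t$ would survive in $R/J(R)$ modulo a nilpotent, violating that non-zero idempotents are not contained in $J(R)+Nil(R)$), hence $j = q \in Nil(R)$.

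For $(1) \Rightarrow (2)$, I would apply Corollary~\ref{3.16} to conclude that $R$ is semi-tripotent. Thus every $a \in R$ decomposes as $a = e + j$ with $e^{3}=e$ and $j \in J(R)$, and since $J(R)$ is nil by hypothesis, $j \in Nil(R)$. This already yields a 2-nil-clean decomposition. To upgrade it to the \emph{strongly} property (that is, $ej = je$), I would argue that $\overline{R} := R/J(R)$ satisfies the identity $x^{3}=x$, so by Jacobson's theorem $\overline{R}$ is commutative; hence $\overline{e}$ and $\overline{a}$ commute in $\overline{R}$. A standard lifting of tripotents through the nil ideal $J(R)$ (analogous to the lifting of idempotents modulo a nil ideal) then allows one to choose the tripotent component $e$ to commute with $a$, from which $ej = je$ follows.

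The main obstacle will be the \emph{strongly} aspect of $(1) \Rightarrow (2)$: the semi-tripotent decomposition of Corollary~\ref{3.16} does not automatically supply commuting components. The resolution rests on tripotent-lifting through the nil ideal $J(R)$ together with the commutativity of $\overline{R}$, which together yield a commuting refinement of the decomposition. The $J(R)$-nil hypothesis is essential here, since without it the nilness of the second summand (and hence even the 2-nil-clean decomposition, let alone its strong version) would fail.
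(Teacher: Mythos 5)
Your $(2)\Rightarrow(1)$ direction is fine and in fact supplies details the paper waves away as ``straightforward'': the reduction of the tripotent part $t$ of $j\in J(R)$ to $0$ works because $j,q,t$ pairwise commute, $\bar t$ is a nilpotent tripotent in $R/J(R)$, hence $\bar t=0$, and then $t=t^3=t\cdot t^2$ with $t^2\in J(R)$ forces $t=0$.

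The gap is in $(1)\Rightarrow(2)$, at the step you call ``a standard lifting of tripotents through the nil ideal $J(R)$ (analogous to the lifting of idempotents modulo a nil ideal).'' Unlike idempotents, tripotents do \emph{not} lift modulo nil ideals in general: in $R=\mathbb{Z}_{16}$ with the nil ideal $I=(8)$, the class of $3$ is a tripotent of $R/I\cong\mathbb{Z}_8$, but neither $3$ nor $11$ is a tripotent of $\mathbb{Z}_{16}$, so no lift exists. The obstruction is that lifting a tripotent amounts to lifting a square root of an idempotent, i.e.\ applying Hensel's lemma to $y^2-e$, whose derivative $2y$ need not be a unit. The statement you actually need --- $R/J(R)$ tripotent and $J(R)$ nil imply $R$ is strongly $2$-nil-clean --- is true, but it is a genuine theorem of Sheibani and Chen \cite{10}, proved by observing that $6$ is nilpotent (since $6=0$ in any tripotent ring), splitting $R\cong R_1\times R_2$ with $2$ nilpotent in $R_1$ and $3$ nilpotent in $R_2$, and resolving the $\pm 1$ ambiguity separately in each factor; it is not a routine analogue of idempotent lifting, so as written your argument does not close. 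Either cite that result of \cite{10} outright, or follow the paper's route, which avoids tripotent lifting altogether: by Proposition~\ref{3.5}(iii) the quotient $R/J(R)$ is $2$-UU, and since $J(R)$ is nil the $2$-UU property lifts to $R$ (lifting \emph{nilpotents} through a nil ideal is the easy direction, \cite[Lemma 2.1]{10}); then $R$ is an exchange $2$-UU ring, hence strongly $2$-nil-clean by \cite[Theorem 4.1]{10}. Your detour through Corollary~\ref{3.16} and semi-tripotency is workable only if the commuting refinement is justified by the cited structure theorem rather than by an appeal to a ``standard'' lifting that fails in general.
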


\begin{proof}
(i) \(\Rightarrow\) (ii). Since \( R \) is 2-$UNJ$, we get $R/J(R)$ is 2-$UU$ and hence $R$ is 2-$UU$ exploiting \cite[Lemma 2.1]{10}. Therefore, \( R \) is exchange 2-$UU$ ring, whence it is strongly \( 2 \)-nil-clean as \cite[Theorem 4.1]{10} claimed.

(ii) \(\Rightarrow\) (i). This is pretty straightforward, so we are dropping any argumentation.
\end{proof}

\section{Some Extensions of 2-UNJ Rings}

This section is devoted to exploring various natural extensions and generalizations of the 2-UNJ property across broader algebraic contexts. We examine how the 2-UNJ condition behaves under constructions such as trivial extensions, formal matrix rings, skew polynomial rings and, respectively, power series rings. These investigations not only reveal the robustness and versatility of the 2-UNJ structure, but also highlight its deep compatibility with classical and modern ring-theoretic frameworks. Our results pave the way for further applications and demonstrate the potential of 2-UNJ rings to serve as a unifying theme in the study of the contemporary generalized algebraic systems.

\medskip

Let $R$ be a ring, and $M$ a bi-module over $R$. The trivial extension of $R$ and $M$ is defined as
\[ T(R, M) = \{(r, m) : r \in R \text{ and } m \in M\}, \]
with addition defined component-wise and multiplication defined by
\[ (r, m)(s, n) = (rs, rn + ms). \]
Note that the trivial extension $T(R, M)$ is isomorphic to the subring
\[ \left\{ \begin{pmatrix} r & m \\ 0 & r \end{pmatrix} : r \in R \text{ and } m \in M \right\} \]
consisting of the formal $2 \times 2$ matrix ring $\begin{pmatrix} R & M \\ 0 & R \end{pmatrix}$, and likewise $T(R, R) \cong R[x]/\left\langle x^2 \right\rangle$. We also notice that the set of units of the trivial extension $T(R, M)$ is
\[ U(T(R, M)) = T(U(R), M). \]

We now continue by establishing the following series of affirmations.

\begin{lemma}
Let \( R \) be a ring, and \( M \) an \( R \)-bimodule. Then,
	\[
	Nil(T(R, M)) = T(Nil(R), M).
	\]
\end{lemma}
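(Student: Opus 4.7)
The plan is to establish the two inclusions separately, using an explicit formula for powers of an element of $T(R,M)$. The key computation is that for $(r,m) \in T(R,M)$, induction on $k$ gives
\[
(r,m)^{k} = \left(r^{k},\ \sum_{i=0}^{k-1} r^{i} m\, r^{k-1-i}\right),
\]
since the component-wise addition and the multiplication rule $(r,m)(s,n) = (rs, rn+ms)$ directly yield the recursion in the $M$-coordinate. I would prove this formula first as a short lemma-style computation inside the proof.

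For the inclusion $T(Nil(R), M) \subseteq Nil(T(R,M))$, I would pick $(r,m)$ with $r \in Nil(R)$ and $m \in M$, choose $n$ so that $r^{n} = 0$, and then evaluate the formula above at $k = 2n$. The first coordinate is $r^{2n} = 0$ automatically. For the $M$-coordinate, every index pair $(i, k-1-i) = (i, 2n-1-i)$ satisfies $i \ge n$ or $2n-1-i \ge n$, so in each summand at least one of the factors $r^{i}$ or $r^{2n-1-i}$ vanishes, and the entire sum collapses to zero. Hence $(r,m)^{2n} = (0,0)$ and $(r,m) \in Nil(T(R,M))$.

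For the reverse inclusion $Nil(T(R,M)) \subseteq T(Nil(R), M)$, I would simply project onto the first coordinate: if $(r,m)^{k} = 0$ for some $k$, then the formula shows $r^{k} = 0$, so $r \in Nil(R)$, and of course $m$ lies in $M$ by construction, giving $(r,m) \in T(Nil(R), M)$.

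There is essentially no obstacle here; the only point requiring a little care is the index-pigeonhole in the $M$-coordinate that forces the choice $k = 2n$ (rather than $k = n$), because a term such as $r^{n-1} m r^{n-1}$ need not be zero when $r^{n} = 0$. I would flag this observation explicitly so the exponent choice does not look ad hoc.
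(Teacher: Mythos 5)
Your proof is correct, and it is precisely the routine verification that the paper omits (the paper's ``proof'' simply states that the equality follows from a direct check and leaves the details to the reader). Your explicit formula $(r,m)^{k} = \bigl(r^{k}, \sum_{i=0}^{k-1} r^{i} m r^{k-1-i}\bigr)$ and the careful choice of exponent $k=2n$ (since a term like $r^{n-1}mr^{n-1}$ need not vanish for smaller $k$) supply exactly the details the authors suppress.
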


\begin{proof}
The equality follows from a routine verification, and thus we omit the full details, leaving them to the interested reader for a direct check.
\end{proof}

Let $R$ be a ring and let $\alpha : R \to R$ be a ring endomorphism. Standardly, $R[[x; \alpha]]$ designs the {\it ring of skew formal power series} over $R$; that is, the ring of all formal power series in $x$ with coefficients from $R$ with multiplication defined by $xr = \alpha(r)x$ for all $r \in R$. Particularly, $R[[x]] = R[[x; 1_R]]$ designates the ordinary {\it ring of formal power series} over $R$.

\medskip

We move forward with establishment of the following.

\begin{proposition} \label{cor five}
Let $R$, $S$ be rings, $N$ an $(R,S)$-bimodule, and $M$ an $R$-bimodule. Then, the following hold:
	
(i) The trivial extension $T(R,M)$ is 2-UNJ if, and only if, $R$ is 2-UNJ.
	
(ii) The formal triangular matrix ring $\begin{pmatrix} R & N \\ 0 & S \end{pmatrix}$ is 2-UNJ if, and only if, both $R$ and $S$ are 2-UNJ.
	
(iii) For any $n \geq 1$, the triangular matrix ring $T_n(R)$ is 2-UNJ if, and only if, $R$ is 2-UNJ.
	
(iv) The power series ring $R[[x; \alpha]]$ is 2-UNJ if, and only if, $R$ is 2-UNJ.

(v) The power series ring $R[[x]]$ is 2-UNJ if, and only if, $R$ is 2-UNJ.
\end{proposition}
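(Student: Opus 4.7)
The plan is to handle all five parts uniformly by first pinning down the units, nilpotents, and Jacobson radical of each extension, then performing a direct computation on the square of a unit, and finally obtaining every converse in one stroke via Proposition \ref{3.2}.

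The preliminary identifications are as follows. For the trivial extension I will combine the given formula $U(T(R,M)) = T(U(R),M)$ and the lemma $Nil(T(R,M)) = T(Nil(R),M)$ with the observation that $T(0,M)$ is a square-zero two-sided ideal of $T(R,M)$, hence sits inside the Jacobson radical; passing to the quotient $T(R,M)/T(0,M) \cong R$ then yields $J(T(R,M)) = T(J(R),M)$. For the formal triangular matrix ring I will use the standard descriptions of $U$, $Nil$, and $J$ (diagonal entries constrained to the respective invariants of $R$ and $S$, with the $(1,2)$-entry free in each case). For the skew power series ring I only need that $xR[[x;\alpha]]$ is a two-sided ideal contained in $J(R[[x;\alpha]])$; two-sidedness follows from $xr = \alpha(r)x$, and quasi-regularity from the geometric-series inversion $(1-xf)^{-1} = \sum_{i \geq 0}(xf)^i$.

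The forward direction of (i) is then immediate: for $(u,m) \in U(T(R,M))$ I write $u^2 = 1 + q + j$ with $q \in Nil(R)$, $j \in J(R)$, and observe
\[
(u,m)^2 = (1,0) + (q,0) + (j,\, um+mu),
\]
where the first bracketed term lies in $Nil(T(R,M))$ and the second in $J(T(R,M))$. An analogous computation on $\begin{pmatrix}u & n \\ 0 & v\end{pmatrix}^2$ handles (ii), splitting the square into a diagonal nilpotent piece and a triangular Jacobson piece coming from $u^2 = 1+q_1+j_1$ and $v^2 = 1+q_2+j_2$. Part (iii) then follows by a quick induction on $n$, identifying $T_{n+1}(R)$ with the formal triangular matrix ring $\begin{pmatrix} R & R^n \\ 0 & T_n(R)\end{pmatrix}$ and invoking (ii). For (iv), I write $f = a_0 + xg$ with $a_0 \in U(R)$; then $f^2 - a_0^2 \in xR[[x;\alpha]] \subseteq J(R[[x;\alpha]])$, and the decomposition $a_0^2 = 1 + q + j$ immediately lifts to a 2-UNJ decomposition of $f^2$. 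Part (v) is the specialization $\alpha = 1_R$ of (iv).

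All five converses proceed by a single uniform device: in each case $R$ is realized as a factor ring of the extension (kill $M$, project onto the $(1,1)$ or $(2,2)$ entry, iterate to the diagonal, or set $x=0$), and units of $R$ lift to units of the extension through the obvious embedding (constants, corner diagonal matrices, or constant series). Proposition \ref{3.2} then transfers the 2-UNJ property from the extension back to $R$. The main technical point I expect to require care is the verification that $xR[[x;\alpha]] \subseteq J(R[[x;\alpha]])$ in the skew setting, since the twist by $\alpha$ must be respected when manipulating the geometric-series inverse; the matrix-ring identifications are by contrast routine.
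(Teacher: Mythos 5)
Your proposal is correct and follows essentially the same route as the paper: one direction by a direct computation on the square of a unit using the identifications of $U$, $Nil$ and $J$ for each extension, and the converse by realizing the base ring as a factor of the extension modulo an ideal contained in the Jacobson radical and lifting units (the paper invokes its Propositions \ref{3.2} and \ref{3.5}(i) for exactly this). You are in fact more explicit than the paper, which proves only part (i) and declares the remaining cases analogous; your block identification $T_{n+1}(R)\cong\begin{pmatrix} R & R^{n}\\ 0 & T_{n}(R)\end{pmatrix}$ makes the inductive step of (iii) precise. One small correction in (iv): since $xr=\alpha(r)x$, the set $xR[[x;\alpha]]$ need not be a two-sided ideal, nor need it contain $f^{2}-a_{0}^{2}$, unless $\alpha$ is surjective; you should work instead with $R[[x;\alpha]]\,x$, the ideal of series with zero constant term, for which two-sidedness and the containment in $J(R[[x;\alpha]])$ go through exactly as you describe.
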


\begin{proof}
It suffices to prove only case (1), as the remaining cases follow by arguments very analogous to that of (1). 

To that end, set $A:=T(R, M)$ and consider $I:=T(0, M)$. It is not so difficult to verify that $I\subseteq J(A)$ such that $A/I \cong R$. If $A$ is 2-UNJ, then $R$ is too 2-UNJ in view of Proposition \ref{3.5}. 

Conversely, assume that \( R \) is a 2-UNJ ring and let \( (u, m) \in U(T(R, M)) \). Thus, \( u \in U(R) \). next, since \( R \) is a 2-UNJ ring, we have \( u^2-1 \in Nil(R)+J(R) \). It also follows by direct calculations that
\[
(u, m)^2-(1, 0) = (u^2-1, \ast) \in T(Nil(R), M)+ T(J(R), M).
\]
Hence, one obtains that $$(u, m)^2-(1, 0)\in Nil(T(R, M))+J(T(R, M)),$$ and so \( T(R, M) \) is a 2-UNJ ring, as required.
\end{proof}

Suppose $R$ is a ring and $M$ is a bi-module over $R$. Putting $$DT(R,M) := \{ (a, m, b, n) | a, b \in R, m, n \in M \}$$ with addition defined component-wise and multiplication defined by $$(a_1, m_1, b_1, n_1)(a_2, m_2, b_2, n_2) = (a_1a_2, a_1m_2 + m_1a_2, a_1b_2 + b_1a_2, a_1n_2 + m_1b_2 + b_1m_2 +n_1a_2),$$ we then observe that $DT(R,M)$ is a ring which is isomorphic to $T(T(R, M), T(R, M))$. Moreover, we may write $$DT(R, M) =
\left\{\begin{pmatrix}
	a &m &b &n\\
	0 &a &0 &b\\
	0 &0 &a &m\\
	0 &0 &0 &a
\end{pmatrix} |  a,b \in R, m,n \in M\right\}.$$ We now exhibit the following isomorphism of rings: the map $$\dfrac{R[x, y]}{\langle x^2, y^2\rangle} \rightarrow DT(R, R)$$, defined by $$a + bx + cy + dxy \mapsto
\begin{pmatrix}
	a &b &c &d\\
	0 &a &0 &c\\
	0 &0 &a &b\\
	0 &0 &0 &a
\end{pmatrix},$$ can easily be inspected that it gives a ring isomorphism.

\medskip

We now arrive at the following.

\begin{corollary}
Let $R$ be a ring, and $M$ a bi-module over $R$. Then, the following four statements are equivalent:
\begin{enumerate}
\item
$R$ is a 2-UNJ ring.
\item
$DT(R, M)$ is a 2-UNJ ring.
\item
$DT(R, R)$ is a 2-UNJ ring.
\item
$R[x, y]/\langle x^2, y^2\rangle$ is a 2-UNJ ring.
\end{enumerate}
\end{corollary}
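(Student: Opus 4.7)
The plan is to reduce everything to a two-fold application of Proposition \ref{cor five}(i), using the two ring isomorphisms that have already been exhibited in the paragraphs immediately preceding the corollary. Since the 2-UNJ property is preserved under ring isomorphism, any equivalence between the rings in (2), (3), (4) reduces via these isomorphisms to iterated trivial extensions.

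First, I would establish (1) $\iff$ (2) by observing the identification $DT(R,M) \cong T\bigl(T(R,M),\, T(R,M)\bigr)$ that the authors already remarked on. The forward direction is then a double invocation of Proposition \ref{cor five}(i): if $R$ is 2-UNJ, then $T(R,M)$ is 2-UNJ, and applying the same proposition to the bimodule $T(R,M)$ over itself yields that $T(T(R,M), T(R,M))$ is 2-UNJ, hence so is $DT(R,M)$. The converse direction runs the argument backwards; the two applications of Proposition \ref{cor five}(i) are biconditionals, so no extra work is needed.

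Next, (1) $\iff$ (3) is the special case of (1) $\iff$ (2) obtained by taking $M := R$, viewed as a bimodule over itself in the standard way. Finally, (3) $\iff$ (4) follows from the explicit ring isomorphism
\[
\dfrac{R[x,y]}{\langle x^2, y^2\rangle} \;\longrightarrow\; DT(R,R), \qquad a + bx + cy + dxy \;\mapsto\; \begin{pmatrix} a & b & c & d\\ 0 & a & 0 & c\\ 0 & 0 & a & b\\ 0 & 0 & 0 & a\end{pmatrix},
\]
which was displayed just before the corollary. Since isomorphic rings share the 2-UNJ property, this equivalence is immediate.

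There is no real obstacle: everything rests on the two structural isomorphisms above, together with the already-proved Proposition \ref{cor five}(i). The only point requiring mild care is to make sure that the bimodule $T(R,M)$ over $T(R,M)$ used in the second application of Proposition \ref{cor five}(i) is the natural one (left and right multiplication within $T(R,M)$), so that the resulting trivial extension genuinely matches the multiplication rule defining $DT(R,M)$; a quick side-by-side comparison of the two multiplications confirms this, but I would leave the verification to the reader as the authors have been doing for analogous routine checks.
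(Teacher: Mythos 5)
Your proposal is correct and matches the argument the paper intends: the corollary is stated without proof precisely because the isomorphisms $DT(R,M)\cong T\bigl(T(R,M),T(R,M)\bigr)$ and $R[x,y]/\langle x^2,y^2\rangle\cong DT(R,R)$ exhibited just beforehand reduce everything to a double application of Proposition \ref{cor five}(i). Your added remark about checking that the self-bimodule structure on $T(R,M)$ reproduces the multiplication of $DT(R,M)$ is a reasonable (and correct) point of care, but otherwise the route is identical.
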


Further, let $\alpha$ be an endomorphism of $R$ and suppose $n$ is a positive integer. It was defined by Nasr-Isfahani in \cite{nasr} the concept of {\it skew triangular matrix ring} like this:

$$T_{n}(R,\alpha )=\left\{ \left. \begin{pmatrix}
	a_{0} & a_{1} & a_{2} & \cdots & a_{n-1} \\
	0 & a_{0} & a_{1} & \cdots & a_{n-2} \\
	0 & 0 & a_{0} & \cdots & a_{n-3} \\
	\ddots & \ddots & \ddots & \vdots & \ddots \\
	0 & 0 & 0 & \cdots & a_{0}
\end{pmatrix} \right| a_{i}\in R \right\}$$

\medskip

\noindent with addition point-wise and multiplication given by:

\medskip

\begin{align*}
	&\begin{pmatrix}
		a_{0} & a_{1} & a_{2} & \cdots & a_{n-1} \\
		0 & a_{0} & a_{1} & \cdots & a_{n-2} \\
		0 & 0 & a_{0} & \cdots & a_{n-3} \\
		\ddots & \ddots & \ddots & \vdots & \ddots \\
		0 & 0 & 0 & \cdots & a_{0}
	\end{pmatrix}\begin{pmatrix}
		b_{0} & b_{1} & b_{2} & \cdots & b_{n-1} \\
		0 & b_{0} & b_{1} & \cdots & b_{n-2} \\
		0 & 0 & b_{0} & \cdots & b_{n-3} \\
		\ddots & \ddots & \ddots & \vdots & \ddots \\
		0 & 0 & 0 & \cdots & b_{0}
	\end{pmatrix}  =\\
	& \begin{pmatrix}
		c_{0} & c_{1} & c_{2} & \cdots & c_{n-1} \\
		0 & c_{0} & c_{1} & \cdots & c_{n-2} \\
		0 & 0 & c_{0} & \cdots & c_{n-3} \\
		\ddots & \ddots & \ddots & \vdots & \ddots \\
		0 & 0 & 0 & \cdots & c_{0}
	\end{pmatrix},
\end{align*}
where $$c_{i}=a_{0}\alpha^{0}(b_{i})+a_{1}\alpha^{1}(b_{i-1})+\cdots +a_{i}\alpha^{i}(b_{0}),~~ 1\leq i\leq n-1
.$$

\medskip

We reserve for the elements of $T_{n}(R, \alpha)$ the notation $(a_{0},a_{1},\ldots , a_{n-1})$. In particular, if $\alpha $ is the identity endomorphism, then one apparently verifies that $T_{n}(R,\alpha )$ is a subring of the {\it upper triangular matrix ring} $T_{n}(R)$.

Thereby, we receive the following consequence.

\begin{corollary}
Let $R$ be a ring and $k\geq 1$. Then, the following two points are equivalent:
\begin{enumerate}
\item
$T_{n}(R,\alpha )$ is a 2-UNJ ring.
\item
$R$ is a 2-UNJ ring.
\end{enumerate}
\end{corollary}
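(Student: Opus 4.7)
The plan is to handle both directions at once by identifying a convenient two-sided ideal of $T_n(R,\alpha)$ that is simultaneously contained in the Jacobson radical and nil, so that both parts of Proposition \ref{3.5} apply symmetrically. This mirrors how the earlier extension results in Proposition \ref{cor five} were established via a nil ideal whose quotient recovers the base ring.

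Concretely, I would set
\[
I := \{(0, a_1, a_2, \ldots, a_{n-1}) : a_i \in R\} \subseteq T_n(R,\alpha),
\]
i.e., the collection of elements with zero diagonal entry. Using the product formula $c_i = \sum_{k=0}^{i} a_k \alpha^k(b_{i-k})$, one checks that $I$ is a two-sided ideal (whenever either factor lies in $I$, the resulting diagonal coefficient $c_0 = a_0 \alpha^0(b_0)$ vanishes), and that $I^n = 0$ (each multiplication by an element of $I$ raises the index of the first possibly nonzero entry by at least one). Consequently, $I$ is nilpotent, so $I \subseteq Nil(T_n(R,\alpha)) \cap J(T_n(R,\alpha))$. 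Moreover, the projection $(a_0, a_1, \ldots, a_{n-1}) \mapsto a_0$ yields a ring epimorphism $T_n(R,\alpha) \to R$ with kernel $I$, giving the isomorphism $T_n(R,\alpha)/I \cong R$.

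With this set-up in hand, the equivalence follows by a direct application of Proposition \ref{3.5}. For (i) $\Rightarrow$ (ii), since $I \subseteq J(T_n(R,\alpha))$ and $T_n(R,\alpha)$ is 2-UNJ, Proposition \ref{3.5}(i) gives that the quotient $T_n(R,\alpha)/I \cong R$ is 2-UNJ. Conversely, for (ii) $\Rightarrow$ (i), the ideal $I$ is nil and the quotient $T_n(R,\alpha)/I \cong R$ is 2-UNJ by hypothesis, so Proposition \ref{3.5}(ii) immediately yields that $T_n(R,\alpha)$ is 2-UNJ.

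The main obstacle is essentially bookkeeping: carefully verifying, from the twisted multiplication rule involving the $\alpha$-shifts, that $I$ is closed under left and right multiplication by arbitrary elements of $T_n(R,\alpha)$ and that it is genuinely nilpotent of index at most $n$. The presence of $\alpha$ complicates the formulas only cosmetically, because $\alpha^k(0) = 0$ ensures that the ideal property is insensitive to the endomorphism. Once these routine verifications are in place, the equivalence is a formal consequence of the earlier general machinery, requiring no new ideas specific to the skew setting.
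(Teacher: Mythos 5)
Your proposal is correct and coincides with the argument the paper intends (the paper omits the proof, but the preceding results make clear that the strictly upper triangular part is the designated nilpotent ideal): $I$ with zero diagonal entry is a two-sided ideal satisfying $I^n=0$ and $T_n(R,\alpha)/I\cong R$, so Proposition \ref{3.5}(i) gives (i) $\Rightarrow$ (ii) and Proposition \ref{3.5}(ii) gives (ii) $\Rightarrow$ (i). Your verification of the ideal property via $c_0=a_0\alpha^0(b_0)$ and of nilpotency via the index-shifting observation is exactly the routine check required, so nothing is missing.
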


Furthermore, a simple manipulation with coefficients shows that there is a ring isomorphism $$\varphi : R[x,\alpha]/(x^n)\rightarrow T_{n}(R,\alpha),$$ given by $$\varphi (a_{0}+a_{1}x+\ldots +a_{n-1}x^{n-1}+\langle x^{n} \rangle )=(a_{0},a_{1},\ldots ,a_{n-1})$$ with $a_{i}\in R$, $0\leq i\leq n-1$. Thus, one checks that the isomorphism $$T_{n}(R,\alpha )\cong R[x,\alpha ]/(x^n)$$ holds, where as usual $(x^n)$ is the ideal generated by $x^{n}$.

\medskip

As two immediate consequences, we detect:

\begin{corollary}
Let $R$ be a ring and $k\geq 1$. Then, the following are tantamount:
\begin{enumerate}
\item
$R$ is a $2$-UNJ ring.
\item
For any $n\geq 2$, the quotient-ring $R[x; \alpha]/(x^n)$ is a $2$-UNJ ring.
\item
For any $n\geq 2$, the quotient-ring $R[[x; \alpha]]/(x^n)$ is a $2$-UNJ ring.
\end{enumerate}
\end{corollary}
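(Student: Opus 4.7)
The plan is to reduce everything to the preceding corollary (which says that $T_n(R,\alpha)$ is $2$-UNJ if and only if $R$ is $2$-UNJ) by means of the isomorphism
\[
T_n(R,\alpha)\;\cong\; R[x;\alpha]/(x^n)
\]
that was displayed immediately before the statement, together with an analogous isomorphism for the skew power series ring.

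\textbf{Step 1.} Establish the equivalence (i)$\Leftrightarrow$(ii). Here I simply invoke the ring isomorphism $\varphi$ given above that sends the coset $a_0+a_1x+\cdots+a_{n-1}x^{n-1}+(x^n)$ to the tuple $(a_0,a_1,\ldots,a_{n-1})$ in $T_n(R,\alpha)$. Since being $2$-UNJ is an isomorphism invariant, $R[x;\alpha]/(x^n)$ is $2$-UNJ exactly when $T_n(R,\alpha)$ is, and the preceding corollary tells us this is equivalent to $R$ being $2$-UNJ.

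\textbf{Step 2.} Establish the equivalence (i)$\Leftrightarrow$(iii). The key observation is that each element of $R[[x;\alpha]]/(x^n)$ admits a unique representative of the form $a_0+a_1x+\cdots+a_{n-1}x^{n-1}$, because every power series $\sum_{i\ge 0}a_ix^i$ differs from its truncation at degree $n-1$ by a multiple of $x^n$. Mapping such a coset to $(a_0,a_1,\ldots,a_{n-1})\in T_n(R,\alpha)$, and using exactly the same skew multiplication rule $xr=\alpha(r)x$ that defines the $c_i$'s in the formula for $T_n(R,\alpha)$, yields a ring isomorphism
\[
R[[x;\alpha]]/(x^n)\;\cong\; T_n(R,\alpha).
\]
Again by the preceding corollary, this ring is $2$-UNJ precisely when $R$ is $2$-UNJ.

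\textbf{Step 3.} Combine Steps 1 and 2 to conclude that (i), (ii), and (iii) are pairwise equivalent.

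The only technical point of any substance is Step 2, namely the verification that truncating a formal power series past degree $n-1$ is compatible with the skew multiplication and thus yields a well-defined isomorphism onto $T_n(R,\alpha)$; once that is in hand, everything else is immediate from the already-established corollary for $T_n(R,\alpha)$. Since the verification of the isomorphism is entirely routine, I would state it briefly and refer the reader to direct inspection for the details.
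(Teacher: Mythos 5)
Your proposal is correct and follows exactly the route the paper intends: the paper states this corollary without proof, immediately after displaying the isomorphism $T_n(R,\alpha)\cong R[x,\alpha]/(x^n)$ and the corollary that $T_n(R,\alpha)$ is $2$-UNJ iff $R$ is, so the argument is precisely your reduction via that isomorphism (and its evident power-series analogue, since the two-sided ideal $(x^n)$ in $R[[x;\alpha]]$ consists of the series supported in degrees $\geq n$). No gaps.
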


\begin{corollary}
Let $R$ be a ring. Then, the following are tantamount:
\begin{enumerate}
\item
$R$ is a $2$-UNJ ring.
\item
For any $n\geq 2$, the quotient-ring $R[x]/(x^n)$ is a $2$-UNJ ring.
\item
For any $n\geq 2$, the quotient-ring $R[[x]]/(x^n)$ is a $2$-UNJ ring.
\end{enumerate}
\end{corollary}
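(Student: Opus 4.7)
The plan is to obtain this statement as a direct specialization of the immediately preceding corollary, which handles the more general skew case. Specifically, I would take $\alpha = 1_R$, the identity endomorphism of $R$, so that the skew relation $xr = \alpha(r)x$ degenerates into ordinary commutation $xr = rx$. Under this choice, the skew polynomial ring $R[x;1_R]$ coincides on the nose with the ordinary polynomial ring $R[x]$, and $R[[x;1_R]]$ coincides with $R[[x]]$. Consequently $R[x;1_R]/(x^n) = R[x]/(x^n)$ and $R[[x;1_R]]/(x^n) = R[[x]]/(x^n)$, so the three items (1), (2), (3) here are literally the three items of the previous corollary with $\alpha=1_R$, and the equivalences transfer verbatim.

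A second, more self-contained route I would sketch in parallel is the following. Via the ring isomorphism $T_n(R,\alpha)\cong R[x;\alpha]/(x^n)$ recorded in the excerpt, specialization to $\alpha=1_R$ yields $T_n(R)\cong R[x]/(x^n)$. Thus the equivalence (1) $\Leftrightarrow$ (2) is an immediate reformulation of Proposition \ref{cor five}(iii). For (1) $\Leftrightarrow$ (3), I would invoke Proposition \ref{cor five}(v) together with the observation that $(x^n)$ is contained in the Jacobson radical of $R[[x]]$, so $(x^n)$ is certainly a nil-by-Jacobson ideal when passing to the quotient; then Proposition \ref{3.5}(i)--(ii) handles the transfer of the 2-UNJ property in both directions. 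Even more cleanly, one can just note the ring isomorphism $R[[x]]/(x^n)\cong R[x]/(x^n)$ (both rings have the same underlying additive group $\bigoplus_{i=0}^{n-1} R\,x^i$ with the same truncated multiplication table, since all monomials $x^k$ with $k\ge n$ vanish), which reduces (3) to (2).

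I do not anticipate any genuine obstacle, since all the substantive work, namely the behavior of the 2-UNJ property under skew polynomial and skew power series constructions, has already been carried out in Proposition \ref{cor five} and inherited by the preceding corollary. The only mild verification needed is that, when $\alpha=1_R$, the two-sided ideal generated by $x^n$ in the skew ring reduces to the ordinary ideal $x^n R[x]$ (respectively $x^n R[[x]]$), which is immediate because $x^n$ is central in the non-skew setting.
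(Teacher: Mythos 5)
Your proposal is correct and takes essentially the same route as the paper: the paper states this corollary (together with its skew predecessor) as an immediate consequence of the isomorphism $T_{n}(R,\alpha)\cong R[x;\alpha]/(x^n)$ and the corollary that $T_{n}(R,\alpha)$ is 2-UNJ if and only if $R$ is, and your specialization to $\alpha=1_R$ (equivalently, the identification $T_n(R)\cong R[x]/(x^n)\cong R[[x]]/(x^n)$) is exactly that argument.
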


Let $Nil_*(R)$ denote the \textit{prime radical} (also known in the existing literature as the \textit{lower nilradical}) of a ring $R$, that is, the intersection of all prime ideals of $R$. It is well known that $Nil_*(R)$ forms a nil-ideal in $R$.

In this vein, a ring $R$ is said to be {\it $2$-primal} if its prime radical $Nil_*(R)$ coincides with the set of all nilpotent elements of $R$, i.e.,
\[
Nil_*(R) = Nil(R).
\]

Non-trivial examples of $2$-primal rings include \textit{reduced rings} (rings with no non-zero nilpotent elements) and all \textit{commutative rings}, as a plain check shows in both cases that the set of nilpotent elements equals the prime radical.

\medskip

The next lemma is critical for our successful presentation.

\begin{lemma}\cite[Corollary 3.2.]{daoa}\label{cor 2 primal}
Let $R$ be a 2-primal ring. Then:
\[ J(R[x]) = Nil(R)[x] = Nil_*(R)[x] = Nil_*(R[x]). \]
\end{lemma}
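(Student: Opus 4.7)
My plan is to prove the four-way equality by closing a single cycle of inclusions rather than verifying each equality independently. Concretely, I would establish
\[
Nil_*(R[x]) \;\subseteq\; J(R[x]) \;\subseteq\; Nil(R)[x] \;=\; Nil_*(R)[x] \;\subseteq\; Nil_*(R[x]),
\]
after which all four terms collapse. This cycle requires three genuine inclusions plus one immediate identification of coefficients.

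The two easiest pieces are the middle equality and the leftmost inclusion. The equality $Nil(R)[x]=Nil_*(R)[x]$ is a direct translation of the 2-primal hypothesis $Nil(R)=Nil_*(R)$, since equality of subsets of $R$ lifts coefficient-wise to polynomials. The inclusion $Nil_*(R[x])\subseteq J(R[x])$ holds in every ring, because each maximal ideal is prime, so the intersection of all prime ideals is contained in the intersection of all maximal ideals.

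The first substantive step is $J(R[x])\subseteq Nil(R)[x]$, which I would obtain from Amitsur's theorem on polynomial Jacobson radicals: for any ring $R$, $J(R[x])=N[x]$ where $N:=J(R[x])\cap R$ is a nil ideal of $R$. Then $N\subseteq Nil(R)$, so $J(R[x])=N[x]\subseteq Nil(R)[x]$. The final inclusion $Nil_*(R)[x]\subseteq Nil_*(R[x])$ reduces to a prime-contraction argument: for any prime ideal $P$ of $R[x]$ and any $a,b\in R$ with $aRb\subseteq P\cap R$, a coefficient-wise check shows $a\bigl(\sum r_i x^i\bigr)b=\sum(ar_ib)x^i\in P$, so $aR[x]b\subseteq P$ and hence $a\in P$ or $b\in P$. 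Thus $P\cap R$ is prime in $R$, giving $Nil_*(R)\subseteq P\cap R\subseteq P$; since $P$ is an ideal of $R[x]$, it contains the whole polynomial ideal $Nil_*(R)[x]$, and intersecting over all such $P$ yields the claim.

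The principal obstacle is the invocation of Amitsur's theorem, which is the only deep external fact needed; everything else reduces to elementary ideal-theoretic bookkeeping. A natural alternative route would establish $Nil_*(R[x])=Nil_*(R)[x]$ first, using that $R[x]/Nil_*(R)[x]\cong (R/Nil_*(R))[x]$ is reduced (since $R/Nil_*(R)$ is reduced by 2-primality and polynomial rings over reduced rings remain reduced), but this still leaves the identification of $J(R[x])$ as the key step, so Amitsur seems unavoidable.
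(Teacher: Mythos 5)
Your argument is correct and essentially complete, but note that the paper itself offers no proof of this lemma at all: it is imported wholesale as Corollary~3.2 of the cited reference \cite{daoa}, so there is no internal argument to compare against. Your cyclic-inclusion strategy is a clean, self-contained derivation: Amitsur's theorem gives $J(R[x]) = N[x]$ with $N = J(R[x]) \cap R$ a nil ideal, hence $J(R[x]) \subseteq Nil(R)[x]$; the $2$-primal hypothesis converts $Nil(R)[x]$ into $Nil_*(R)[x]$; your prime-contraction computation correctly shows $P \cap R$ is prime for every prime $P$ of $R[x]$, yielding $Nil_*(R)[x] \subseteq Nil_*(R[x])$; and the inclusion $Nil_*(R[x]) \subseteq J(R[x])$ closes the cycle. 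The one point to repair is your justification of that last inclusion: in a noncommutative ring $J$ is the intersection of the (left) primitive ideals, not of the maximal two-sided ideals --- the latter intersection is the Brown--McCoy radical, which \emph{contains} $J$, so your inequality points the wrong way. The inclusion itself is still immediate: either observe that primitive ideals are prime, so $\bigcap(\text{primes}) \subseteq \bigcap(\text{primitives}) = J$, or simply note that $Nil_*$ is a nil ideal and every nil ideal lies in the Jacobson radical. With that one-line fix, your proof stands, and its only genuinely nontrivial external input is Amitsur's theorem, which is unavoidable by any route that identifies $J(R[x])$.
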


Our next principal slightly curious claim is recorded as follows.

\begin{lemma}\label{l4.8}
Let $R$ be a $2$-primal ring. Then, the following four conditions are equivalent:
\begin{enumerate}
\item
$R$ is a $2$-UU ring.
\item
$R[x]$ is a $2$-UNJ ring.
\item
$R[x]$ is a $2$-UJ ring.
\item
$R[x]$ is a $2$-UU ring.
\end{enumerate}
\end{lemma}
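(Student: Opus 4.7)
The plan is to first establish the crucial structural fact that for a 2-primal ring $R$ one has
\[
J(R[x]) = Nil(R)[x] = Nil(R[x]).
\]
The first equality comes directly from Lemma~\ref{cor 2 primal}. For the second, observe that $R/Nil(R)$ is reduced (as $R$ is 2-primal), hence $(R/Nil(R))[x] \cong R[x]/Nil(R)[x]$ is also reduced, which forces every nilpotent of $R[x]$ to have all its coefficients in $Nil(R)$. This instantly implies that the three properties 2-UU, 2-UJ and 2-UNJ coincide for $R[x]$, yielding $(2) \Leftrightarrow (3) \Leftrightarrow (4)$.

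It then remains to establish $(1) \Leftrightarrow (2)$, which I will do by proving $(2) \Rightarrow (1)$ and $(1) \Rightarrow (4)$. The implication $(2) \Rightarrow (1)$ is immediate: any $u \in U(R)$ lies in $U(R[x])$, and writing $u^2 - 1 = q + j$ with $q \in Nil(R[x])$ and $j \in J(R[x])$ puts $u^2 - 1$ in $Nil(R)[x]$; comparing constant terms gives $u^2 - 1 \in Nil(R)$, so $R$ is 2-UU.

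For $(1) \Rightarrow (4)$, the central preliminary step is to describe the units of $R[x]$. Since $R/Nil(R)$ is reduced, it embeds subdirectly into a product of domains, so $U((R/Nil(R))[x]) = U(R/Nil(R))$, because units of a polynomial ring over a domain are constants and this property survives the subdirect embedding. Combined with $Nil(R) \subseteq J(R)$ (which holds in any 2-primal ring) and the standard lifting of units modulo the Jacobson radical, every $f \in U(R[x])$ admits a presentation $f = a_0 + n(x)$ with $a_0 \in U(R)$ and $n(x) \in Nil(R)[x]$. Invoking (1) gives $a_0^2 - 1 \in Nil(R)$, and a direct expansion
\[
f^2 - 1 = (a_0^2 - 1) + a_0 n(x) + n(x) a_0 + n(x)^2
\]
places the right-hand side in the ideal $Nil(R)[x] = Nil(R[x])$, so $R[x]$ is 2-UU.

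I expect the main technical obstacle to be the identification of the units of $R[x]$ in the non-commutative 2-primal setting: while the commutative case is classical, the non-commutative version relies on the Andrunakievich--Ryabukhin style representation of a reduced ring as a subdirect product of domains, together with the Jacobson-lifting step for $a_0$. Once this description of $U(R[x])$ is in hand, the remaining manipulations take place entirely inside the ideal $Nil(R)[x]$ and are routine.
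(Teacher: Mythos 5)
Your proof is correct, and its skeleton coincides with the paper's: both arguments ultimately rest on the identity $J(R[x]) = Nil(R)[x] = Nil_*(R[x])$ from Lemma~\ref{cor 2 primal} and on the description of $U(R[x])$ over a $2$-primal ring (constant coefficient a unit, higher coefficients in the prime radical), both deduce $(2)\Rightarrow(1)$ by reading off constant terms, and both get $(1)\Rightarrow(4)$ by pushing all the non-constant data into the two-sided ideal $Nil_*(R)[x]$. You differ in two worthwhile respects. First, you record the further equality $Nil(R[x]) = Nil(R)[x]$ (via reducedness of $(R/Nil(R))[x]$), so that $Nil(R[x]) = J(R[x])$ and conditions $(2)$, $(3)$, $(4)$ collapse into literally the same condition; the paper instead proves $(i)\Rightarrow(iii)$ and $(i)\Rightarrow(iv)$ as two parallel computations and closes the cycle with the trivial implications $(iii)\Rightarrow(ii)$ and $(iv)\Rightarrow(ii)$. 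Your bookkeeping is the cleaner of the two. Second, where the paper simply cites \cite{Chenpr} for the structure of $U(R[x])$, you rederive it: $R/Nil(R)$ is reduced, hence a subdirect product of domains, hence $(R/Nil(R))[x]$ has only constant units, and the constant lifts to a unit of $R$ because $Nil(R)=Nil_*(R)\subseteq J(R)$. This makes the argument self-contained at the cost of invoking the (standard) subdirect-product representation of reduced rings; all the individual steps (units map to units under the surjections onto $D_i[x]$, units of $D[x]$ over a domain are constant, unit lifting modulo an ideal inside the Jacobson radical) check out. Both routes are sound and prove the same statement.
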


\begin{proof}
Implications (iii) $\Rightarrow$ (ii) and (iv) $\Rightarrow$ (ii) are rather obvious, so their verifications are eliminated voluntarily.
	
(i) $\Rightarrow$ (iv). Write $u = \sum_{i=0}^n u_ix^i \in U(R[x])$. Since $R$ is 2-primal, \cite[Theorem 2.5]{Chenpr} discovers that $u_0 \in U(R)$ and, for each $1 \leq i \leq n$, $u_i \in Nil_*(R)$. Thus, since $R$ is 2-UU, we additionally have $1-u_0^2 \in Nil(R)$, and via the ideal property of $Nil_*(R)$, we obtain:
\[
	1-u^2 \in (1-u_0^2) + Nil_*(R)[x]x \subseteq Nil_*(R)[x] = Nil_*(R[x]) \subseteq Nil(R[x]).
\]

(i) $\Rightarrow$ (iii). The proof is similar to the proof of the previous case, and thus ignored.
	
(ii) $\Rightarrow$ (i). Given $u \in U(R) \subseteq U(R[x])$, we discovery that $1-u^2 \in Nil(R[x])+J(R[x])$. Therefore, $1-u^2 \in Nil_*(R) = Nil(R)$.

(i) $\Rightarrow$ (iii). The proof is analogous to the proof of (i) $\Rightarrow$ (iv).
\end{proof}

In what follows, we attempt to extend Lemma \ref{l4.8}. To that extent, we first refine Lemma \ref{cor 2 primal} in a new language: for an endomorphism $\alpha$ of $R$, the ring R is called {\it $\alpha$-compatible} if, for any elements $a$ and $b$ in $R$, the equality $ab = 0$ holds if, and only if, $a\alpha(b) = 0$. This definition is given in full detail \cite{has}. It is worthy of noticing that, in this case, the map $\alpha$ must be injective.

\begin{lemma}
Let $R$ be a 2-primal and $\alpha$-compatible ring. Then, the next equalities are fulfilled:
\[ J(R[x,\alpha]) = \text{Nil}(R)[x,\alpha] = \text{Nil}_*(R)[x,\alpha] = \text{Nil}_*(R[x,\alpha]). \]
\end{lemma}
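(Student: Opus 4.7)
The plan is to parallel the proof of Lemma~\ref{cor 2 primal}, substituting the twisted multiplication $xr=\alpha(r)x$ for ordinary polynomial multiplication and invoking $\alpha$-compatibility each time commutation between $x$ and a scalar is needed. Since $R$ is 2-primal, $\text{Nil}(R)=\text{Nil}_*(R)$, so the middle equality is free and the problem collapses to two verifications:
\[
\text{Nil}_*(R)[x,\alpha]=\text{Nil}_*(R[x,\alpha])
\quad\text{and}\quad
J(R[x,\alpha])=\text{Nil}_*(R)[x,\alpha].
\]

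First I would record two preparatory facts that flow from the hypotheses: $\alpha(\text{Nil}_*(R))\subseteq \text{Nil}_*(R)$, and every twisted product $b_1\alpha^{k_1}(b_2)\cdots\alpha^{k_{m-1}}(b_m)$ with some $b_i \in \text{Nil}_*(R)$ lies again in $\text{Nil}_*(R)$, both of which follow because $R/\text{Nil}_*(R)$ is reduced and $\alpha$ is compatible. Consequently $\text{Nil}_*(R)[x,\alpha]$ is a two-sided ideal of $R[x,\alpha]$, and a routine expansion of a power of an arbitrary element of this ideal shows it is nil; this gives $\text{Nil}_*(R)[x,\alpha]\subseteq \text{Nil}_*(R[x,\alpha])$. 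For the reverse, I would take a nilpotent $f=\sum_{i=0}^n a_ix^i$ and induct on $n$: the leading coefficient of $f^k$ is, up to corrections in $\text{Nil}_*(R)$ produced by the twisted product, a product of iterates $\alpha^{j}(a_n)$, which by $\alpha$-compatibility forces $a_n \in \text{Nil}_*(R)$; the remaining coefficients are then handled by the inductive hypothesis after subtracting off the leading term.

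For the Jacobson-radical equality, I would first pin down the units of $R[x,\alpha]$: under the 2-primal $\alpha$-compatible hypothesis, $u=\sum a_i x^i \in U(R[x,\alpha])$ if and only if $a_0\in U(R)$ and $a_i \in \text{Nil}(R)$ for $i\geq 1$, which is the skew analog of \cite[Theorem~2.5]{Chenpr}. Given this description, the inclusion $\text{Nil}_*(R)[x,\alpha]\subseteq J(R[x,\alpha])$ is immediate since, for any $f$ in the former ideal and any $g\in R[x,\alpha]$, the element $1+fg$ satisfies the unit criterion. For the reverse inclusion, given $f=\sum a_i x^i \in J(R[x,\alpha])$, testing $1-rf$ for $r\in R$ and $1-fx^kr$ for various $k\geq 0$ against the unit criterion forces every coefficient $a_i$ into $\text{Nil}(R)=\text{Nil}_*(R)$.

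The main obstacle I anticipate is the last step, namely pushing the coefficients of an element of $J(R[x,\alpha])$ into $\text{Nil}_*(R)$. The twisted product shuffles $a_i$ with $\alpha^k(a_i)$ in an intricate way, and only the $\alpha$-compatibility hypothesis, together with the 2-primal reduction modulo $\text{Nil}_*(R)$, allows one to control these shuffled expressions uniformly and to run the Amitsur-style argument that underlies Lemma~\ref{cor 2 primal} in the untwisted case.
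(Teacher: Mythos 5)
Your overall architecture coincides with the paper's: both reduce the lemma to (a) the identity $\text{Nil}(R)[x,\alpha]=\text{Nil}_*(R)[x,\alpha]=\text{Nil}_*(R[x,\alpha])$, (b) the automatic inclusion $\text{Nil}_*(R[x,\alpha])\subseteq J(R[x,\alpha])$, and (c) pushing $J(R[x,\alpha])$ back into $\text{Nil}(R)[x,\alpha]$ by testing a unit of the form $1-fx$ (or your $1-rf$, $1-fx^kr$) against a coefficient criterion for units of $R[x,\alpha]$. The difference is that the paper simply cites \cite[Lemma 2.2]{ccp} both for (a) and for the coefficient criterion, so its only original content is step (c); you instead undertake to re-derive those cited ingredients.

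In that re-derivation one step would fail as written. From the facts that $\text{Nil}_*(R)[x,\alpha]$ is a two-sided ideal and that it is nil you conclude $\text{Nil}_*(R)[x,\alpha]\subseteq\text{Nil}_*(R[x,\alpha])$; but a nil ideal is only guaranteed to lie in the \emph{upper} nilradical, not in the prime radical, so nilness alone does not yield the containment you need. Moreover, the nilness itself is not a ``routine expansion'': the coefficients of $f^k$ are sums of $k$-fold twisted products of elements of $\text{Nil}_*(R)$, and since $\text{Nil}_*(R)$ is nil but in general not nilpotent, these products have no uniform vanishing degree, and the number of distinct iterates $\alpha^{j}(a_i)$ involved grows with $k$, so local nilpotence does not immediately close the argument either. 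The standard repair --- and the actual content of \cite[Lemma 2.2]{ccp}, exactly as Lemma \ref{cor 2 primal} handles the untwisted case --- is to argue through prime ideals or strong nilpotence: $\alpha$-compatibility makes the minimal primes of $R$ suitably $\alpha$-stable, $P[x,\alpha]$ is prime when $P$ is, and every prime of $R[x,\alpha]$ contracts appropriately, which gives both inclusions of (a) at once. The remainder of your argument, the description of $U(R[x,\alpha])$ and the two Jacobson-radical inclusions built on it, is sound and matches what the paper does.
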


\begin{proof}
Knowing that $R$ is 2-primal and that \cite[Lemma 2.2]{ccp} is valid, we can write $$\text{Nil}(R)[x,\alpha] = \text{Nil}_*(R)[x,\alpha] = \text{Nil}_*(R[x,\alpha]).$$

On the other hand, we always have $\text{Nil}_*(R[x,\alpha]) \subseteq J(R[x,\alpha])$, so it just suffices to show that $J(R[x,\alpha]) \subseteq \text{Nil}(R)[x,\alpha]$. In fact, writing $f = \sum_{i=0}^{n}a_ix^i \in J(R[x,\alpha])$, we then have $1-fx \in U(R[x,\alpha])$. Therefore, \cite[Lemma 2.2]{ccp} enables us that, for $0\leq i\leq n$, the containment $a_i \in \text{Nil}(R)[x,\alpha]$ is really true, thus proving the desired equalities.
\end{proof}

We are now prepared to prove the following.

\begin{lemma}\label{lemma alpha}
For a $2$-primal $\alpha$-compatible ring $R$, the following assertions are tantamount:
\begin{enumerate}
\item $R$ is a $2$-UU ring.
\item The skew polynomial ring $R[x,\alpha]$ is $2$-UNJ.
\item $R[x,\alpha]$ is a $2$-UJ ring.
\item $R[x,\alpha]$ is a $2$-UU ring.
\end{enumerate}
\end{lemma}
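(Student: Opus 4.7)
The plan is to follow the template of Lemma~\ref{l4.8} very closely, replacing the polynomial ring $R[x]$ with the skew polynomial ring $R[x,\alpha]$ and invoking the previous lemma (the $\alpha$-compatible refinement of Lemma~\ref{cor 2 primal}) in place of Lemma~\ref{cor 2 primal} itself. The implications \textup{(iii)} $\Rightarrow$ \textup{(ii)} and \textup{(iv)} $\Rightarrow$ \textup{(ii)} are immediate from the very definition of a 2-UNJ ring, since $\mathrm{Nil}(R[x,\alpha]) \subseteq \mathrm{Nil}(R[x,\alpha]) + J(R[x,\alpha])$ and likewise for $J(R[x,\alpha])$.

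For \textup{(ii)} $\Rightarrow$ \textup{(i)}, I would take an arbitrary $u \in U(R)$, view it as a constant element of $R[x,\alpha]$, and use the 2-UNJ hypothesis to write $1-u^{2} \in \mathrm{Nil}(R[x,\alpha]) + J(R[x,\alpha])$. The preceding lemma gives the chain
\[
\mathrm{Nil}(R[x,\alpha]) + J(R[x,\alpha]) \subseteq \mathrm{Nil}_{*}(R)[x,\alpha] + \mathrm{Nil}_{*}(R)[x,\alpha] \subseteq \mathrm{Nil}_{*}(R)[x,\alpha].
\]
Since $1-u^{2}$ lies in $R$, comparing constant coefficients forces $1-u^{2} \in \mathrm{Nil}_{*}(R) = \mathrm{Nil}(R)$, establishing that $R$ is 2-UU.

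The substantive step is \textup{(i)} $\Rightarrow$ \textup{(iv)} (and the essentially identical \textup{(i)} $\Rightarrow$ \textup{(iii)}). I would first invoke the characterization of units in skew polynomial rings over 2-primal $\alpha$-compatible rings, available via \cite[Lemma 2.2]{ccp} and the $\alpha$-compatible version of \cite[Theorem 2.5]{Chenpr}: a unit $u = \sum_{i=0}^{n}u_{i}x^{i} \in U(R[x,\alpha])$ must have $u_{0} \in U(R)$ and $u_{i} \in \mathrm{Nil}_{*}(R)$ for every $i \geq 1$. Then I would compute
\[
u^{2} = \sum_{k \geq 0}\Bigl(\sum_{i+j=k} u_{i}\,\alpha^{i}(u_{j})\Bigr)x^{k},
\]
so the constant term is $u_{0}^{2}$ and every higher coefficient is a sum of terms of the form $u_{i}\alpha^{i}(u_{j})$ in which at least one of $u_{i}, u_{j}$ lies in $\mathrm{Nil}_{*}(R)$. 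The $\alpha$-compatibility ensures $\alpha$ preserves $\mathrm{Nil}_{*}(R)$, and the ideal property then places every such coefficient in $\mathrm{Nil}_{*}(R)$. Using the 2-UU hypothesis on $R$ to handle the constant term $1-u_{0}^{2} \in \mathrm{Nil}(R)=\mathrm{Nil}_{*}(R)$, I would conclude
\[
1-u^{2} \in \mathrm{Nil}_{*}(R)[x,\alpha] = \mathrm{Nil}_{*}(R[x,\alpha]) \subseteq \mathrm{Nil}(R[x,\alpha]),
\]
which gives (iv); the same inclusion, read against $\mathrm{Nil}_{*}(R)[x,\alpha] = J(R[x,\alpha])$ from the previous lemma, yields (iii).

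The main obstacle will be the unit-characterization step, since one must verify that \cite[Lemma 2.2]{ccp} (which is stated for the $\alpha$-compatible setting) really does force all positive-degree coefficients of a unit of $R[x,\alpha]$ into $\mathrm{Nil}_{*}(R)$; once that is in hand, the rest of the argument is a bookkeeping calculation with the twisted multiplication, and the $\alpha$-invariance of $\mathrm{Nil}_{*}(R)$ coming from $\alpha$-compatibility makes every cross term behave exactly as in the commutative case handled in Lemma~\ref{l4.8}.
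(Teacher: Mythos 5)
Your proposal is correct and follows essentially the same route as the paper: the trivial implications, the coefficient analysis of units via \cite[Lemma 2.2]{ccp} combined with the $\alpha$-compatible analogue of Lemma \ref{cor 2 primal} for (i) $\Rightarrow$ (iv) and (i) $\Rightarrow$ (iii), and the constant-coefficient reading of $1-u^2 \in Nil(R[x,\alpha])+J(R[x,\alpha])$ for (ii) $\Rightarrow$ (i). Your version merely spells out the twisted product and the $\alpha$-invariance of $Nil_*(R)$ a bit more explicitly than the paper does.
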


\begin{proof}
The implications (iii) $\Rightarrow$ (ii) and (iv) $\Rightarrow$ (ii) are immediate.

(i) $\Rightarrow$ (iv). Consider $u = \sum_{i=0}^n u_ix^i \in U(R[x])$. By the usage of 2-primality of $R$ and \cite[Lemma 2.2]{ccp}, we know $u_0 \in U(R)$ and $u_i \in Nil_*(R)$ for $1 \leq i \leq n$. Since $R$ is 2-UU, one knows that $1-u_0^2 \in Nil(R)$. Furthermore, using the ideal property of $Nil_*(R)$, we deduce:
\[
1-u^2 \in (1-u_0^2) + Nil_*(R)[x,\alpha]x \subseteq Nil_*(R)[x,\alpha] = Nil_*(R[x]) \subseteq Nil(R[x,\alpha]).
\]

(i) $\Rightarrow$ (iii). It is analogous to the proof of (i) $\Rightarrow$ (iv).

(ii) $\Rightarrow$ (i). For $u \in U(R) \subseteq U(R[x,\alpha])$, one has that $1-u^2 \in Nil(R[x,\alpha])+J(R[x,\alpha])$, leading to $1-u^2 \in Nil_*(R) = Nil(R)$, as needed.

(i) $\Rightarrow$ (iii). It is analogical to the argument for (i) $\Rightarrow$ (iv).
\end{proof}

\medskip

Let $A$, $B$ be two rings, and let $M$, $N$ be an $(A,B)$-bi-module and a $(B,A)$-bi-module, respectively. Also, we consider the two bi-linear maps $\phi :M\otimes_{B}N\rightarrow A$ and $\psi:N\otimes_{A}M\rightarrow B$ that apply to the following properties.
$$Id_{M}\otimes_{B}\psi =\phi \otimes_{A}Id_{M},Id_{N}\otimes_{A}\phi =\psi \otimes_{B}Id_{N}.$$
For $m\in M$ and $n\in N$, we define $mn:=\phi (m\otimes n)$ and $nm:=\psi (n\otimes m)$. Now, the $4$-tuple $R=\begin{pmatrix}
	A & M\\
	N & B
\end{pmatrix}$ becomes to an associative ring with obvious matrix operations, which is termed a {\it Morita context ring}. Likewise, denote the two-sided ideals $Im \phi$ and $Im \psi$ by $MN$ and $NM$, respectively, that are named the {\it trace ideals} of the Morita context ring.

The following necessary and sufficient condition holds true.

\begin{proposition}\label{4.7}
Let $R=\left(\begin{array}{ll}A & M \\ N & B\end{array}\right)$ be a Morita context ring such that $MN$ and $NM$ are nilpotent ideals of $A$ and $B$, respectively. Then, $R$ is a 2-UNJ ring if, and only if, both $A$ and $B$ are 2-UNJ rings.
\end{proposition}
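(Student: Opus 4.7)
The plan is to exhibit a nilpotent (hence nil) ideal $I$ of $R$ such that $R/I$ splits as a direct product, and then to translate back and forth using the propositions of Section~2. Concretely, set
$$I := \begin{pmatrix} MN & M \\ N & NM \end{pmatrix}.$$
A routine check---using that $MN$ is an ideal of $A$, $NM$ is an ideal of $B$, and the bimodule inclusions $(MN)M,\ M(NM) \subseteq M$ and $(NM)N,\ N(MN) \subseteq N$---shows that $I$ is a two-sided ideal of $R$, and the canonical projection induces a ring isomorphism $R/I \cong A/MN \oplus B/NM$.

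The key step is to prove that $I$ is nilpotent. I would do this by induction on $k \geq 1$, establishing the entry-wise containments
\[
(I^{2k})_{11} \subseteq (MN)^k,\ \ (I^{2k})_{12} \subseteq (MN)^k M,\ \ (I^{2k})_{21} \subseteq N(MN)^k,\ \ (I^{2k})_{22} \subseteq (NM)^k.
\]
The base case $k=1$ is a direct matrix calculation (for instance, $(I^2)_{11} \subseteq (MN)^2 + MN = MN$ and $(I^2)_{12} \subseteq (MN)M + M(NM) = MNM$); the inductive step rests on the block rule $(XY)_{ij} = X_{i1}Y_{1j} + X_{i2}Y_{2j}$ together with the collapse identities $(MN)^k \cdot MN = (MN)^{k+1}$, $(MN)^k M \cdot NM = (MN)^{k+1} M$, and $(MN)^k M \cdot NMN = (MN)^{k+2}$. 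Choosing $p$ with $(MN)^p = 0 = (NM)^p$, which exists by hypothesis, then yields $I^{2p} = 0$, so $I$ is nilpotent and in particular contained in $J(R)$.

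Once $I$ is known to be a nil ideal, Proposition~\ref{3.5}(i),(ii) gives that $R$ is 2-UNJ if and only if $R/I$ is 2-UNJ. Propositions~\ref{3.1} and~\ref{3.2}---applied to the coordinate projections $A/MN \oplus B/NM \to A/MN$ and $\to B/NM$, with units lifting via $u \mapsto (u,1)$ and $v \mapsto (1,v)$---show this is tantamount to both $A/MN$ and $B/NM$ being 2-UNJ. Finally, two further applications of Proposition~\ref{3.5}(i),(ii), this time to the nil ideals $MN$ of $A$ and $NM$ of $B$, transfer these properties to $A$ and $B$ themselves, yielding the desired biconditional.

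The main obstacle I anticipate is precisely the nilpotency of $I$: a naïve step-by-step induction on $I^t$ will not work, since already $(I^2)_{11} \subseteq MN$ exhibits no immediate power-reduction in $MN$. One must adopt the squared filtration $I^{2k}$ and carefully track how the off-diagonal entries (each carrying an extra factor of $M$ or $N$) interact with the diagonal entries under matrix multiplication, using the bimodule identities $(MN)M = M(NM) = MNM$ and $(NM)N = N(MN) = NMN$ to keep the accounting consistent across the iterations.
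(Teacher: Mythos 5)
Your proof is correct, but it follows a genuinely different route from the one in the paper. The paper argues directly with elements: for the forward direction it embeds a unit $a\in U(A)$ as the diagonal unit $\operatorname{diag}(a,1)$ of $R$, reads off the $(1,1)$-entry of a 2-UNJ decomposition of its square, and uses the nilpotency of $MN$ to conclude that this entry's ``nilpotent part'' is genuinely nilpotent in $A$; for the converse it decomposes $x^{2}-1$, for a unit $x=\left(\begin{smallmatrix} a & m\\ n & b\end{smallmatrix}\right)$, into the nilpotent diagonal matrix $\operatorname{diag}(r+mn,\,s+nm)$ plus a matrix whose diagonal lies in $J(A)\oplus J(B)$ and whose off-diagonal blocks are arbitrary elements of $M$ and $N$, implicitly invoking the description of $J(R)$ for a Morita context whose trace ideals lie in the respective radicals. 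You instead pass to the quotient by $I=\left(\begin{smallmatrix} MN & M\\ N & NM\end{smallmatrix}\right)$, prove $I$ nilpotent via the squared filtration $(I^{2k})_{11}\subseteq (MN)^{k}$, $(I^{2k})_{12}\subseteq (MN)^{k}M$, etc., identify $R/I$ with $A/MN\oplus B/NM$, and shuttle the 2-UNJ property back and forth using Propositions \ref{3.5}(i),(ii), \ref{3.1} and \ref{3.2}. Your route is more structural and cleanly sidesteps two points the paper leaves tacit (that units of $R$ have unit diagonal entries modulo the trace ideals, and that $\left(\begin{smallmatrix} J(A) & M\\ N & J(B)\end{smallmatrix}\right)\subseteq J(R)$); the price is the nilpotency computation for $I$, which you identify as the crux and carry out correctly --- the base case, the block-multiplication recursion, and the collapse identities $(MN)M=M(NM)$ and $(NM)N=N(MN)$ coming from the Morita compatibility conditions all check out, and the unit-lifting needed to apply Proposition \ref{3.2} to the coordinate projections is immediate. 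Both arguments are valid.
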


\begin{proof}
Suppose $R$ is 2-UNJ. We prove that $A$ is 2-UNJ too. To show that, suppose \( a \in U(A) \). Next, consider the matrix element
\[
x := \begin{pmatrix} a & 0 \\ 0 & 1 \end{pmatrix} \in U(R).
\]
Since \( R \) is 2-UNJ, we compute
\[
x^2 = \begin{pmatrix} 1 & 0 \\ 0 & 1 \end{pmatrix} +
\begin{pmatrix} r & m_1 \\ n_1 & s \end{pmatrix} +
\begin{pmatrix} j_1 & m_2 \\ n_2 & j_2 \end{pmatrix},
\]
where \( j_1 \in J(A) \), \( j_2 \in J(B) \), and the middle matrix is nilpotent. Therefore, \( r^k + d = 0 \) for some \( d \in MN\). Since $MN$ is nilpotent, we can freely get that \( r \in Nil(A) \), as requested. Similarly, \( B \) is 2-UNJ as well.

Conversely, assume \( A \) and \( B \) are both 2-UNJ rings. Put
\[
x := \begin{pmatrix} a & m \\ n & b \end{pmatrix} \in U(R),
\]
where \( a \in U(A) \), \( b \in U(B) \). Then, we calculate
\[
x^2 = \begin{pmatrix}
	a^2 + mn & am + mb \\
	na + bn & b^2 + nm
\end{pmatrix}.
\]
Since \( A \), \( B \) are 2-UNJ, we derive that $a^2 = 1 + r + j$, where $q \in Nil(A)$ and $j \in J(A)$; also, $b^2 = 1 + s + j_0$, where $s\in Nil(B)$ and $j_0 \in J(B)$.
Furthermore, it must be that
\[
x^2 = \begin{pmatrix} 1 & 0 \\ 0 & 1 \end{pmatrix} +
\begin{pmatrix} r + mn & 0 \\ 0 & s + nm \end{pmatrix} +
\begin{pmatrix} j & am + mb \\ na + bn & j_0 \end{pmatrix},
\]
where the first matrix is nilpotent, and the second one lies in \( J(R) \). Consequently, \( R \) is a 2-UNJ ring, as promised.
\end{proof}

Given a ring $R$ and a central element $s$ of $R$, the $4$-tuple $\begin{pmatrix}
	R & R\\
	R & R
\end{pmatrix}$ becomes a ring with addition component-wise and with multiplication defined by
$$\begin{pmatrix}
	a_{1} & x_{1}\\
	y_{1} & b_{1}
\end{pmatrix}\begin{pmatrix}
	a_{2} & x_{2}\\
	y_{2} & b_{2}
\end{pmatrix}=\begin{pmatrix}
	a_{1}a_{2}+sx_{1}y_{2} & a_{1}x_{2}+x_{1}b_{2} \\
	y_{1}a_{2}+b_{1}y_{2} & sy_{1}x_{2}+b_{1}b_{2}
\end{pmatrix}.$$
This ring is denoted by $K_s(R)$. Moreover, a {\it Morita context}
$\begin{pmatrix}
	A & M\\
	N & B
\end{pmatrix}$ with $A=B=M=N=R$ is said to be a {\it generalized matrix ring} over $R$. It was observed by Krylov in \cite{18} that a ring $S$ is generalized matrix over $R$ if, and only if, $S=K_s(R)$ for some $s\in C(R)$. Here $MN=NM=sR$, so that the equivalencies $MN\subseteq J(A)\Longleftrightarrow s\in J(R)$ and $NM\subseteq J(B)\Longleftrightarrow s\in J(R)$ hold.

\medskip

We can now extract the following consequence, which gives an interesting relationship.

\begin{corollary}\label{4.9}
Let $R$ be a ring and $s\in C(R)\cap Nil(R)$. Then, $K_s(R)$ is a $2$-UNJ ring if, and only if, $R$ is a $2$-UNJ ring.
\end{corollary}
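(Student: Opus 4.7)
The plan is to reduce Corollary \ref{4.9} directly to Proposition \ref{4.7}. The ring $K_s(R)$ is, by construction, the Morita context ring with $A = B = R$ and $M = N = R$, and as noted in the paragraph immediately preceding the corollary, the trace ideals of this context are $MN = NM = sR$. So everything in the hypothesis of Proposition \ref{4.7} is already identified, except that I need to verify the nilpotency condition on these trace ideals.

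Next I would verify that $sR$ is a nilpotent ideal of $R$. Since $s \in Nil(R)$, pick $n \ge 1$ with $s^n = 0$. Because $s \in C(R)$, the central element $s$ commutes past every factor, so a typical product of $n$ elements of $sR$ can be rewritten as $s^n$ times a product of elements of $R$, which gives $(sR)^n = s^n R = 0$. Hence $MN = sR$ is nilpotent in $A = R$, and by the same calculation $NM = sR$ is nilpotent in $B = R$. This is really the only computation in the argument.

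With this checked, Proposition \ref{4.7} applies verbatim to the Morita context $K_s(R)$, and yields that $K_s(R)$ is 2-UNJ if and only if both $A = R$ and $B = R$ are 2-UNJ, i.e., if and only if $R$ is 2-UNJ.

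There is no real obstacle here; the only thing to be careful about is invoking centrality of $s$ to ensure $(sR)^n = s^n R$ (without centrality one would only get $(sR)^n \subseteq s R s R \cdots s R$, which need not collapse to $s^n R$). Since centrality is explicitly assumed in the hypothesis $s \in C(R) \cap Nil(R)$, the reduction to Proposition \ref{4.7} is immediate.
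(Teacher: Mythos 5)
Your proposal is correct and is exactly the intended derivation: the paper states this corollary without proof as an immediate consequence of Proposition \ref{4.7}, using the identification $MN = NM = sR$ from the preceding paragraph, and your verification that $(sR)^n = s^nR = 0$ (using centrality of $s$) supplies the nilpotency hypothesis needed to invoke that proposition.
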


Further, following Tang and Zhou (cf. \cite{19}), for $n\geq 2$ and for $s\in C(R)$, the $n\times n$ {\it formal matrix ring} over $R$ defined via $s$, and denoted by $M_{n}(R;s)$, is the set of all $n\times n$ matrices over $R$ with usual addition of matrices and with multiplication defined below:

\medskip

\noindent For $(a_{ij})$ and $(b_{ij})$ in $M_{n}(R;s)$,
$$(a_{ij})(b_{ij})=(c_{ij}), \quad \text{where} ~~ (c_{ij})=\sum s^{\delta_{ikj}}a_{ik}b_{kj}.$$
Here, $\delta_{ijk}=1+\delta_{ik}-\delta_{ij}-\delta_{jk}$, where $\delta_{jk}$, $\delta_{ij}$, $\delta_{ik}$ are the {\it Kronecker delta symbols}.

\medskip

We now intend to state the following consequence, which provides an interesting relationship.

\begin{corollary}\label{4.10}
Let $R$ be a ring and $s\in C(R)\cap Nil(R)$. Then, $M_{n}(R;s)$ is a 2-UNJ ring if, and only if, $R$ is a 2-UNJ ring.
\end{corollary}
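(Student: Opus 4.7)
The plan is to prove the statement by induction on $n$, reducing the problem to Corollary~\ref{4.9} and Proposition~\ref{4.7} applied to a natural Morita-context decomposition of $M_n(R;s)$. The base case $n=2$ is exactly Corollary~\ref{4.9}, since $M_2(R;s)\cong K_s(R)$, so there is no extra work at the bottom of the induction.

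For the inductive step with $n\geq 3$, I would split the index set $\{1,\dots,n\}$ as $\{1\}\cup\{2,\dots,n\}$ and present $M_n(R;s)$ in block form as
$$M_n(R;s)\;\cong\;\begin{pmatrix} R & M \\ N & M_{n-1}(R;s) \end{pmatrix},$$
where $M$ is the $(R,M_{n-1}(R;s))$-bimodule consisting of the first-row entries $(a_{12},\dots,a_{1n})$ and $N$ is the $(M_{n-1}(R;s),R)$-bimodule of first-column entries. The two bilinear pairings $\phi:M\otimes N\to R$ and $\psi:N\otimes M\to M_{n-1}(R;s)$ are read off directly from the twisted matrix product of $M_n(R;s)$, and the compatibility conditions required by a Morita context are just a restatement of the associativity of the multiplication in $M_n(R;s)$, so no extra verification is needed there.

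The central calculation is that the two trace ideals $MN\subseteq R$ and $NM\subseteq M_{n-1}(R;s)$ are nilpotent. For $k\in\{2,\dots,n\}$, the contribution of $a_{1k}b_{k1}$ to the $(1,1)$-entry of $MN$ carries the factor $s^{\delta_{1k1}}$, and since $k\neq 1$ one computes $\delta_{1k1}=1+\delta_{11}-\delta_{1k}-\delta_{k1}=2$; therefore $MN\subseteq s^2R$. Likewise, for $i,j\in\{2,\dots,n\}$ the contribution of $b_{i1}a_{1j}$ to position $(i,j)$ of $NM$ carries the factor $s^{\delta_{i1j}}=s^{1+\delta_{ij}}\in sR$; hence $NM\subseteq sM_{n-1}(R;s)$. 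Because $s\in C(R)\cap Nil(R)$ and $s$ is central, both $s^2R$ and $sM_{n-1}(R;s)$ are nilpotent two-sided ideals.

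Having verified the nilpotency hypothesis of Proposition~\ref{4.7}, that proposition yields that $M_n(R;s)$ is 2-UNJ if and only if both $R$ and $M_{n-1}(R;s)$ are 2-UNJ; the inductive hypothesis then collapses the second factor to the single condition that $R$ is 2-UNJ, completing the proof. The one slightly delicate place is the bookkeeping with the exponents $\delta_{ijk}$ to confirm that every summand contributing to $MN$ or $NM$ picks up a \emph{positive} power of $s$; once that is secured, everything else is mechanical.
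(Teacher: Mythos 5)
Your proof is correct and is essentially the derivation the paper intends (the corollary is stated there without proof, as a consequence of Proposition \ref{4.7}, which is exactly the reduction you carry out): the block decomposition, the exponent bookkeeping giving $MN\subseteq s^{2}R$ and $NM\subseteq sM_{n-1}(R;s)$, and the nilpotency of these trace ideals are all in order. One microscopic correction: $M_{2}(R;s)$ is $K_{s^{2}}(R)$ rather than $K_{s}(R)$ (since $\delta_{121}=2$), but this is harmless because $s^{2}\in C(R)\cap Nil(R)$ whenever $s$ is, so Corollary \ref{4.9} (or Proposition \ref{4.7} applied directly) still settles the base case.
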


A {\it Morita context} $\begin{pmatrix}
	A & M\\
	N & B
\end{pmatrix}$ is called {\it trivial}, if both the context products are trivial, i.e., $MN=0$ and $NM=0$. We now routinely see that
$$\begin{pmatrix}
	A & M\\
	N & B
\end{pmatrix}\cong T(A\times B, M\oplus N),$$
where
$\begin{pmatrix}
	A & M\\
	N & B
\end{pmatrix}$ is a trivial Morita context referring to \cite{20}.

\medskip

We now manage the following criterion.

\begin{corollary}\label{4.11}
The trivial Morita context
$\begin{pmatrix}
		A & M\\
		N & B
\end{pmatrix}$ is a 2-UNJ ring if, and only if, $A$ and $B$ are both 2-UNJ rings.
\end{corollary}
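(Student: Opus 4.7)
The plan is to reduce the claim entirely to machinery already established in the paper. The paragraph immediately preceding the corollary records the ring isomorphism
$$\begin{pmatrix} A & M\\ N & B \end{pmatrix}\cong T(A\times B,\, M\oplus N),$$
valid whenever $MN=0$ and $NM=0$. Since being 2-UNJ is clearly preserved under ring isomorphism, proving the corollary amounts to showing that $T(A\times B,\, M\oplus N)$ is 2-UNJ if and only if both $A$ and $B$ are 2-UNJ.

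First, I would apply Proposition \ref{cor five}(i) with base ring $A\times B$ and bimodule $M\oplus N$; this immediately reduces the question to whether the direct product $A\times B$ is itself 2-UNJ.

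For the ``if'' direction it then suffices to cite Proposition \ref{3.1}, which asserts that finite direct products of 2-UNJ rings are again 2-UNJ. For the ``only if'' direction I would invoke Proposition \ref{3.2}: the canonical projection $\pi_A \colon A\times B \to A$ is a surjective ring homomorphism, and every $a\in U(A)$ lifts to the unit $(a,1)\in U(A\times B)$, so the unit-lifting hypothesis of Proposition \ref{3.2} is met and $A$ inherits the 2-UNJ property. The argument for $B$ is symmetric via $\pi_B$.

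I do not anticipate a genuine obstacle, since all the substantive work has already been carried out in Propositions \ref{3.1}, \ref{3.2}, and \ref{cor five}(i), together with the trivial-Morita-context isomorphism recalled above. The only point that deserves a brief verification is that units of a finite direct product lift component-wise to units, which is immediate from the fact that $1$ is a unit in each factor.
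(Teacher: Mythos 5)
Your proposal is correct and follows essentially the same route as the paper: pass to the trivial extension $T(A\times B, M\oplus N)$ via the recalled isomorphism, apply Proposition \ref{cor five}(i) to reduce to $A\times B$, and use Proposition \ref{3.1} for the product. Your extra appeal to Proposition \ref{3.2} (with the component-wise unit lifting through the projections) to get the ``only if'' direction for the direct product is a sensible touch, since Proposition \ref{3.1} as stated covers only one direction, but it does not change the substance of the argument.
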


\begin{proof}
It is elementarily verified that the two isomorphisms
$$\begin{pmatrix}
		A & M\\
		N & B
\end{pmatrix} \cong T(A\times B,M\oplus N) \cong \begin{pmatrix}
		A\times B & M\oplus N\\
		0 & A \times B
\end{pmatrix}$$ hold. Then, the rest of the proof follows combining Propositions \ref{cor five}(i) and \ref{3.1}, as expected.
\end{proof}

\section{Group Rings}

In this section, we investigate the special behavior of the 2-UNJ property within the setting of group rings. By analyzing specific conditions on the underlying groups and employing structural properties of augmentation ideals, we provide further insights into how the 2-UNJ framework interacts with group-theoretic constructions.

Following the traditional terminology, we say that a group $G$ is a {\it $p$-group} if the order of every element of $G$ is a power of the prime number $p$. Moreover, a group $G$ is said to be {\it locally finite} if every finitely generated subgroup is finite.

Suppose now that $G$ is an arbitrary group and $R$ is an arbitrary ring. As usual, the symbol $RG$ stands for the group ring of $G$ over $R$. The homomorphism $\varepsilon :RG\rightarrow R$, defined by $\varepsilon (\displaystyle\sum_{g\in G}a_{g}g)=\displaystyle\sum_{g\in G}a_{g}$, is called the {\it augmentation map} of $RG$ and its kernel, denoted by $\Delta (RG)$, is called the {\it augmentation ideal} of $RG$.

Let $I$ be an ideal of a ring $R$. We define:
\[
\sqrt{I} = \{a \in R : \exists n \in \mathbb{N}, a^n \in I\}.
\]

Considering this notation, we start with the following preliminary claim.

\begin{lemma}
Let R be a ring. Then, we have the following two containments:

(1)  $\text{Nil}(R) + J(R) \subseteq \sqrt{J(R)}$.

(2)  $\sqrt{J(R)} \cap C(R) \subseteq J(R)$.
\end{lemma}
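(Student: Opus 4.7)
The plan is to handle the two inclusions separately, with (1) being a straightforward reduction modulo $J(R)$ and (2) reducing to the standard characterization of $J(R)$ via invertibility of $1-ra$.

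For (1), I would take a typical element $a = q + j$ with $q \in \mathrm{Nil}(R)$ and $j \in J(R)$, fix $n \in \mathbb{N}$ with $q^n = 0$, and pass to the quotient $\bar R = R/J(R)$. Since $\bar j = 0$, we get $\bar a = \bar q$, and therefore $\overline{(q+j)^n} = \bar a^n = \bar q^n = \overline{q^n} = 0$. This gives $(q+j)^n \in J(R)$, i.e., $q+j \in \sqrt{J(R)}$, which is exactly the statement.

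For (2), let $a \in \sqrt{J(R)} \cap C(R)$ and fix $n \in \mathbb{N}$ with $a^n \in J(R)$. Recall that $a \in J(R)$ if and only if $1 - ra$ is a unit for every $r \in R$, so it suffices to verify this invertibility. Given $r \in R$, centrality of $a$ ensures that $r$ and $a$ commute, hence $(ra)^k = r^k a^k$ for all $k$, and the standard telescoping identity yields
\[
(1-ra)\bigl(1 + ra + (ra)^2 + \cdots + (ra)^{n-1}\bigr) = 1 - (ra)^n = 1 - r^n a^n,
\]
with the same product on the other side (again by centrality). Since $a^n \in J(R)$ and $J(R)$ is a two-sided ideal, $r^n a^n \in J(R)$, so $1 - r^n a^n \in 1 + J(R) \subseteq U(R)$. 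It follows that $1 - ra$ has both a left and right inverse, hence is a unit; therefore $a \in J(R)$.

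The main conceptual points are minor: in (1) one must observe that the nilpotent exponent of $q$ survives modulo $J(R)$; in (2) the essential ingredient is centrality, which is precisely what allows the telescoping and guarantees that the one-sided inverse obtained is in fact two-sided. Neither part seems to present a serious obstacle, so the proof should be short and purely formal, requiring no deeper structure theory of $R$.
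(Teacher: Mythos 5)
Your proof is correct and follows essentially the same route as the paper: part (2) is the identical telescoping argument (you are slightly more careful in noting that $1 - r^n a^n \in 1 + J(R) \subseteq U(R)$ and that the inverse is two-sided), and your quotient-ring phrasing of part (1) is just a clean way of making precise the paper's direct observation that $(q+j)^n \in J(R)$.
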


\begin{proof}
(1) Assume $q + j \in Nil(R) + J(R)$, where $q \in Nil(R)$ with $q^n = 0$ and $j \in J(R)$. Thus, it is easy to see that $(q + j)^n \in J(R)$ meaning that $q + j \in \sqrt{J(R)}$.

(2) Assume $a \in \sqrt{J(R)} \cap C(R)$. Thus, there exists $n \in \mathbb{N}$ such that $a^n \in J(R)$. Now, for any $r \in R$, as $ar = ra$, we have $(ra)^n = r^n a^n$. Therefore,
\[
(1 - ra)(1 + (ra) + \cdots + (ra)^{n-1}) = 1 - (ra)^n = 1 - r^n a^n \in U(R),
\]
allowing $1 - ra \in U(R)$. Hence, $a \in J(R)$.
\end{proof}

The next two claims are crucial and so worthy of mentioning.

\begin{lemma}\cite[Proposition 9]{coon}\label{ext le}
Let $R$ be a ring, $G$ a group, and $H$ a subgroup of $G$. 

(1) $J(RG) \cap RH \subseteq J(RH)$.

(2) If $G$ is a locally finite group, then $J(R) = J(RG) \cap R$. In particular, $J(R)G \subseteq J(RG)$.
\end{lemma}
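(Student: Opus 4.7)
The plan is to prove (1) by a transfer argument using the free $RH$-bimodule structure on $RG$, then derive the easy inclusion of (2) as a special case of (1), and finally prove the harder inclusion of (2) by reducing to a finite subgroup and passing to matrices. For (1), I would fix $x \in J(RG) \cap RH$ and an arbitrary $y \in RH$. Since $yx \in RH \cap J(RG)$, the element $1-yx$ has a two-sided inverse $z$ in $RG$. Choosing a right-coset transversal $T \ni e$ for $H$ in $G$ gives a decomposition $RG = \bigoplus_{t \in T} RH\cdot t$ as a free left $RH$-module, and the projection $\pi : RG \to RH$ onto the $e$-summand is left $RH$-linear. Applying $\pi$ to $(1-yx)z = 1$ and using $1-yx \in RH$ yields $(1-yx)\pi(z) = 1$ in $RH$. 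The mirror construction with a left-coset transversal and the right $RH$-module structure produces a left inverse of $1-yx$ in $RH$, whence $1-yx \in U(RH)$ and, since $y$ was arbitrary, $x \in J(RH)$.

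For (2), the inclusion $J(RG) \cap R \subseteq J(R)$ is immediate from (1) applied with $H = \{e\}$. For the reverse inclusion $J(R) \subseteq J(RG)$, take $a \in J(R)$ and an arbitrary $\beta \in RG$; the goal is $1-\beta a \in U(RG)$. Since $\beta$ has finite support it lies in $RG_0$ for some finitely generated subgroup $G_0 \le G$, and local finiteness forces $G_0$ to be finite. It therefore suffices to prove $1-\beta a$ is a unit in $RG_0$. Viewing $RG_0$ as a free right $R$-module with basis $G_0$ of rank $n:=|G_0|$, left multiplication by $1-\beta a$ is a right $R$-linear endomorphism, and, using $ga=ag$ inside $RG$, one checks that its matrix in this basis is $I_n - M$ with $M \in M_n(J(R))$. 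The standard identity $J(M_n(R)) = M_n(J(R))$ then forces this matrix into $U(M_n(R))$, which extracts a right inverse of $1-\beta a$ in $RG_0$; the mirror argument using the left $R$-module structure extracts a left inverse. Hence $1-\beta a$ is a unit.

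I expect the subtlest step to be the finite-group case of (2) when $R$ is non-commutative: naive attempts to show $1 + J(R)G_0 \subseteq U(RG_0)$ directly, either by geometric-series expansion or by lifting units from $(R/J(R))G_0$, do not obviously succeed. The matrix reformulation sidesteps this by exploiting both the left and the right $R$-module structures on $RG_0$ to produce the two one-sided inverses of $1-\beta a$ separately, since no single module-structure argument yields both at once in the non-commutative setting.
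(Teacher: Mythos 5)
Your proof is correct. Note that the paper supplies no argument of its own for this lemma --- it is quoted directly from Connell's \emph{On the group ring} (Proposition 9) --- so there is nothing in the text to compare against; your coset-projection argument for (1) and your reduction of (2) to a finite subgroup followed by the identification $M_n(J(R)) = J(M_n(R))$, with the two one-sided inverses extracted separately from the right and left $R$-module structures, is essentially the classical proof, and the care you take over one-sidedness is exactly what is needed since $R$ need not be central in $RG$. The only clause you leave unaddressed is the final ``in particular'' statement $J(R)G \subseteq J(RG)$, which is immediate once $J(R) \subseteq J(RG)$ is known, because $J(RG)$ is an ideal of $RG$ and hence contains $J(R)g$ for every $g \in G$.
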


\begin{lemma}
Let $RG$ be a 2-UNJ ring. Then, $R$ is also a 2-UNJ ring.
\end{lemma}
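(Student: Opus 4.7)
The plan is to realize $R$ as a suitable factor ring of $RG$ and then to invoke Proposition \ref{3.2}, which says that if a 2-UNJ ring admits a factor ring whose units all lift, then that factor ring is 2-UNJ.

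The natural surjective ring homomorphism to use is the augmentation map $\varepsilon : RG \to R$ given by $\varepsilon(\sum_{g\in G} a_g g) = \sum_{g\in G} a_g$. This is a well-defined ring epimorphism with kernel $\Delta(RG)$, so $R$ is a factor ring of $RG$ in the sense required by Proposition \ref{3.2}.

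The key step is to verify the lifting hypothesis, namely that every unit of $R$ is the image under $\varepsilon$ of some unit of $RG$. This is essentially automatic from the standard embedding $\iota : R \hookrightarrow RG$ sending $r \mapsto r \cdot 1_G$: clearly $\varepsilon \circ \iota = \mathrm{id}_R$, and since $\iota$ is a unital ring monomorphism it carries $U(R)$ into $U(RG)$ (the inverse of $u\cdot 1_G$ being $u^{-1}\cdot 1_G$). Thus, given any $v \in U(R)$, the element $v\cdot 1_G \in U(RG)$ is a genuine lift of $v$.

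With both hypotheses of Proposition \ref{3.2} in place -- $RG$ is 2-UNJ by assumption and every unit of $R$ lifts to a unit of $RG$ -- we conclude at once that $R$ is 2-UNJ. I do not anticipate any real obstacle; the argument is essentially a direct quotation of Proposition \ref{3.2} combined with the standard split embedding $R \hookrightarrow RG \twoheadrightarrow R$.
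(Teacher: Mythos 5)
Your proposal is correct and is essentially the paper's own argument: the paper also uses the augmentation map together with the inclusion $U(R)\subseteq U(RG)$, writing $u^2=1+q+j$ in $RG$ and applying $\varepsilon$, which is precisely the content of Proposition \ref{3.2} that you invoke. Routing the argument explicitly through Proposition \ref{3.2} via the split embedding $r\mapsto r\cdot 1_G$ is a clean (and arguably tidier) packaging of the same idea.
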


\begin{proof}
Let $u \in U(R) \subseteq U(RG)$. Since $RG$ is a 2-UNJ ring, there exist $q \in Nil(RG)$ and $j \in J(RG)$ such that $u^2 = 1 + q + j$.

Clearly, $\varepsilon(q) \in Nil(R)$ and, since $\varepsilon$ is a surjective homomorphism, we have $\varepsilon(j) \in J(R)$. Therefore, we obtain $u^2 = 1 + \varepsilon(q) + \varepsilon(j)$. Finally, $R$ is a 2-UNJ ring, as claimed.
\end{proof}

In the sequel, we investigate what properties the group $G$ must have when $RG$ is a 2-UNJ ring. Concretely, we succeed to prove the following assertions.

\begin{lemma}
Let $RG$ be a 2-UNJ ring. Then, $G$ is a torsion group.
\end{lemma}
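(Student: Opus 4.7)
The plan is to derive a contradiction from the assumption that some $g \in G$ has infinite order, by producing an element of $R\langle g \rangle \cong R[x,x^{-1}]$ which must be a unit on the one hand, yet fails a routine leading-coefficient test on the other.

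First, I would apply the 2-UNJ hypothesis to $g \in U(RG)$ to obtain $g^2 - 1 \in J(RG) + Nil(RG)$, and then use part (1) of the opening $\sqrt{\cdot}$-lemma of this section to conclude that $g^2 - 1 \in \sqrt{J(RG)}$, hence $(g^2 - 1)^n \in J(RG)$ for some integer $n \geq 1$. Setting $H = \langle g \rangle$, infinite cyclicity of $H$ yields the isomorphism $RH \cong R[x,x^{-1}]$ with $g \leftrightarrow x$. Since $(g^2-1)^n$ lives in $RH$, Lemma \ref{ext le}(1) gives $(g^2-1)^n \in RH \cap J(RG) \subseteq J(RH)$, so $g^2 - 1 \in \sqrt{J(RH)}$.

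The crux is stripping off the exponent $n$. Under the identification $RH \cong R[x,x^{-1}]$, the element $x^2 - 1$ has all coefficients in $\{0, \pm 1\} \subseteq C(R)$ and $x$ commutes with $R$, so $g^2 - 1$ is central in $RH$. Part (2) of the same preliminary lemma then upgrades this to $g^2 - 1 \in \sqrt{J(RH)} \cap C(RH) \subseteq J(RH)$.

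Finally, I would derive the contradiction via a direct leading-coefficient computation. Since $g^2-1 \in J(RH)$, the element $1 - g(g^2-1) = 1 + g - g^3$ must be a unit in $RH$. Suppose $(1 + x - x^3)h = 1$ in $R[x,x^{-1}]$ with $h = \sum_i b_i x^i$ nonzero, and let $M = \max\{i : b_i \neq 0\}$. The coefficient of $x^{M+3}$ in the product $(1 + x - x^3)h$ equals $b_{M+3} + b_{M+2} - b_M = -b_M$ by the maximality of $M$, while on the right-hand side it must be $0$. Hence $b_M = 0$, contradicting the choice of $M$; no such $h$ exists, so $1 + g - g^3 \notin U(RH)$, the desired contradiction.

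The main obstacle I anticipate is the middle step: the raw conclusion $(g^2-1)^n \in J(RH)$ is not strong enough to run the final degree argument, and it is essential to exploit both the centrality of $g^2 - 1$ in $RH$ and the characterization $\sqrt{J(RH)} \cap C(RH) \subseteq J(RH)$ in order to promote it to $g^2 - 1 \in J(RH)$ itself. Once that is achieved, the unit obstruction in $R[x,x^{-1}]$ is a one-line computation.
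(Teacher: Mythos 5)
Your argument follows the paper's proof of this lemma essentially step for step: pass from $g^2-1\in Nil(RG)+J(RG)$ to $\sqrt{J(RG)}$, restrict to $R\langle g\rangle\cong R[x,x^{-1}]$ via $J(RG)\cap R\langle g\rangle\subseteq J(R\langle g\rangle)$, use centrality of $g^2-1$ together with the containment $\sqrt{J}\cap C\subseteq J$ to land $g^2-1$ in $J(R\langle g\rangle)$ itself, and then exhibit a Laurent polynomial that is forced to be a unit but cannot be. (Your choice of unit, $1+g-g^3=1-g(g^2-1)$, is in fact better justified than the element $1-g+g^2$ appearing in the paper, which does not visibly follow from $1-g^2\in J(R\langle g\rangle)$ alone.)

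There is, however, a genuine if small hole in the closing coefficient computation. From $(1+x-x^3)h=1$ with $h=\sum_i b_ix^i$ and $M=\max\{i:b_i\neq 0\}$ you correctly find that the coefficient of $x^{M+3}$ in the product is $-b_M$; but the corresponding coefficient of the right-hand side is $0$ only when $M+3\neq 0$. In the exceptional case $M=-3$ the comparison merely yields $b_{-3}=-1$, which is no contradiction at all. The standard repair is to use both ends of the support: setting $m=\min\{i:b_i\neq 0\}$, the coefficient of $x^{m}$ in the product is $b_m+b_{m-1}-b_{m-3}=b_m\neq 0$, which forces $m=0$, while your top-degree comparison forces $M=-3$; then $m\le M$ gives $0\le -3$, the desired contradiction. (This is presumably why the paper's own proof records both the minimal and the maximal index of the support of the putative inverse.) With that one-line patch your proof is complete.
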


\begin{proof}
Suppose on the reversible hypothesis that there exists $g \in G$ of infinite order. Since $RG$ is a 2-UNJ ring, we can get $$1-g^2 \in \text{Nil}(RG) + J(RG) \subseteq \sqrt{J(RG)}.$$ Therefore, there exists $n \in \mathbb{N}$ such that $$(1-g^2)^n \in J(RG) \cap R\langle g\rangle \subseteq J(R\langle g\rangle).$$

But, as the element $1-g^2$ is central in the ring $R\langle g\rangle$, we have $1-g^2 \in J(R\langle g\rangle)$. Consequently, $1-g+g^2 \in U(R\langle g\rangle)$. Therefore, there exist two integers $n < m$ and elements $a_i$ with $a_n \neq 0 \neq a_m$ such that
\[
(1 - g + g^2)\sum_{i=n}^{m}a_ig^i=1.
\]
This, however, leads to a suspected contradiction, and thus every element $g \in G$ has to be of finite order, as expected.
\end{proof}

With the aid of all of the above, our basic statement sounds thus.

\begin{theorem}\label{2 group ring}
Let $RG$ be a 2-UNJ ring with $2 \in J(R)$. Then, $G$ is a 2-group.
\end{theorem}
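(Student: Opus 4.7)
The plan is to argue by contradiction. Assume $G$ is not a $2$-group. By the preceding lemma $G$ is torsion, so some element of $G$ has order divisible by an odd prime; replacing it by a suitable power, I may fix an element $g \in G$ of order exactly an odd prime $p$. The target contradiction will be that in $R\langle g\rangle$ one must have $g=1$, despite $g$ having prime order $p>1$.

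First I would harvest the 2-UNJ hypothesis on the unit $g \in U(RG)$: this gives
\[
g^2 - 1 \;\in\; \mathrm{Nil}(RG) + J(RG) \;\subseteq\; \sqrt{J(RG)}.
\]
So some power $(g^2-1)^k$ lies in $J(RG)\cap R\langle g\rangle$, which by Lemma \ref{ext le}(1) is contained in $J(R\langle g\rangle)$. Hence $g^2-1 \in \sqrt{J(R\langle g\rangle)}$. Since $\langle g\rangle$ is abelian, $R\langle g\rangle$ is commutative and $g^2-1$ is central, so by the preliminary lemma $\sqrt{J(\cdot)} \cap C(\cdot) \subseteq J(\cdot)$ I conclude $g^2 - 1 \in J(R\langle g\rangle)$, i.e.\ $g^2 \equiv 1 \pmod{J(R\langle g\rangle)}$.

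Since $\gcd(2,p)=1$, I write $1=2a+pb$ and compute $g = (g^2)^a (g^p)^b = (g^2)^a \equiv 1 \pmod{J(R\langle g\rangle)}$, so $g-1 \in J(R\langle g\rangle)$. Now the hypothesis $2\in J(R)$ enters: since $\langle g\rangle$ is finite (hence locally finite), Lemma \ref{ext le}(2) yields $J(R)\langle g\rangle \subseteq J(R\langle g\rangle)$, so $2 \in J(R\langle g\rangle)$. Reducing modulo $J(R\langle g\rangle)$ gives
\[
1 + g + g^2 + \cdots + g^{p-1} \;\equiv\; p\cdot 1 \;\equiv\; 1 \pmod{J(R\langle g\rangle)},
\]
because $p$ is odd and $2$ lies in the radical. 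Therefore $1+g+\cdots+g^{p-1}$ is a unit of $R\langle g\rangle$. But $(g-1)(1+g+\cdots+g^{p-1}) = g^p-1 = 0$, so multiplying by the inverse gives $g-1=0$, a contradiction.

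The main obstacle is the bridge from $g^2-1 \in \sqrt{J(RG)}$ down to $g^2-1 \in J(R\langle g\rangle)$: this is exactly where one must combine Lemma \ref{ext le}(1) (to push the radical statement into the cyclic subring) with the centrality lemma (to erase the radical sign once inside a commutative subring). Once those two reductions are in place, the arithmetic with the geometric series $1+g+\cdots+g^{p-1}$, together with the parity trick enabled by $2 \in J(R)$, finishes the argument cleanly.
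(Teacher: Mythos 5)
Your argument is correct and follows the same overall strategy as the paper: pass from $1-g^2\in \mathrm{Nil}(RG)+J(RG)\subseteq\sqrt{J(RG)}$ down into $J(R\langle g\rangle)$ via Lemma~\ref{ext le}(1) and the centrality lemma, use $2\in J$ to turn the geometric sum $1+g+\cdots+g^{p-1}$ into a unit, and then annihilate it against $g-1$. Where you differ is in the execution of the middle step, and your version is cleaner: the paper first proves $2\in J(RG)$ by the detour $2^3=3^2-1\in\sqrt{J(RG)}$ and then builds up $\sum_{i=0}^{2k}g^i\in U(RG)$ case by case from $1+g^2\in J(R\langle g\rangle)$, whereas you get $2\in J(R\langle g\rangle)$ directly from Lemma~\ref{ext le}(2) applied to the finite subgroup $\langle g\rangle$, and you use the B\'ezout identity $1=2a+pb$ to conclude $g\equiv 1 \pmod{J(R\langle g\rangle)}$, after which $1+g+\cdots+g^{p-1}\equiv p\equiv 1$ is immediate. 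One small correction: your justification that $R\langle g\rangle$ is commutative is false when $R$ is noncommutative; what you actually need, and what is true, is only that $g^2-1$ is \emph{central} in $R\langle g\rangle$ (group elements commute with coefficients and $g$ commutes with its own powers), which is exactly the hypothesis of the lemma $\sqrt{J(\cdot)}\cap C(\cdot)\subseteq J(\cdot)$. With that phrasing fixed, the proof is complete.
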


\begin{proof}
First, we show that $2 \in J(RG)$. Indeed, since $2 \in J(R)$, we have $3 \in U(R) \subseteq U(RG)$, whence $$2^3 = 3^2 - 1 \in Nil(RG) + J(RG) \subseteq \sqrt{J(RG)}.$$ Therefore, there exists $k \in \mathbb{N}$ such that $2^{3k} \in J(RG)$. As $2$ is central, we conclude $2 \in J(RG)$.

Now, we show that, for any $k \in \mathbb{N}$ and $g \in G$, $\sum_{i=0}^{2k}g^i \in U(RG)$. Assume for a moment that $k=1$. Since $RG$ is 2-UNJ, we have $$1-g^2 \in Nil(RG) + J(RG) \subseteq \sqrt{J(RG)},$$ so there exists $k \in \mathbb{N}$ with $$(1-g^2) \in J(RG) \cap R\langle g\rangle \subseteq J(R\langle g\rangle).$$

But, as $1-g^2$ is central in $R\langle g\rangle$, we may get $1-g^2 \in J(R\langle g\rangle)$. Moreover, since $2 \in J(RG)$, we have $2 \in J(R\langle g\rangle)$, and hence $1+g^2 = (1-g^2) + 2g^2 \in J(R\langle g\rangle)$. Therefore, $$1+g+g^2 = (1+g^2) + g \in J(R\langle g\rangle) + U(R\langle g\rangle) \subseteq U(R\langle g\rangle) \subseteq U(RG),$$ completing this situation.

For $k=2$, from the case $k=1$, we have $1+g^2 \in J(R\langle g\rangle)$, whence:
\[
1+g+g^2+g^3+g^4 = (1+g^2)(1+g) + g^4 \in J(R\langle g\rangle) + U(R\langle g\rangle) \subseteq U(R\langle g\rangle) \subseteq U(RG).
\]

Similarly, we can show for any $k$ that $\sum_{i=0}^{2k}g^i \in U(RG)$ is true.

If now we assume that $g \in G$ has order $p$ not dividing 2, then $p$ must obviously be odd (i.e., $p=2k+1$). However, by what we have proven so far, $\sum_{i=0}^{2k}g^i \in U(RG)$; but, $(1-g)\left(\sum_{i=0}^{2k}g^i\right) = 0$ guarantees $1-g=0$, a visible contradiction. Finally, $G$ must be a 2-group, as formulated.
\end{proof}

Our next major result is the following one.

\begin{theorem}
Let $R$ be a ring with $3 \in J(R)$, and $G$ a $p$-group with $p$ a prime. If $RG$ is a 2-UNJ ring, then either $G$ is a $3$-group or $G$ is a group of exponent $2$.
\end{theorem}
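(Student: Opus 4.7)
The plan is to mirror the proof of Theorem~\ref{2 group ring}, with the role of $2$ replaced by $3$, and to split the conclusion into two cases reflecting its asymmetric form. As preparation I would first establish that $3\in J(RG)$: since $3\in J(R)$, the element $-2=1-3$ lies in $U(R)$, hence $2\in U(R)\subseteq U(RG)$, and the $2$-UNJ property gives $2^2-1=3\in Nil(RG)+J(RG)\subseteq\sqrt{J(RG)}$; the centrality of $3$ combined with the containment $\sqrt{J(RG)}\cap C(RG)\subseteq J(RG)$ then forces $3\in J(RG)$, and hence $3\in J(R\langle g\rangle)$ via Lemma~\ref{ext le}. Second, for every $g\in G$ the same centrality argument used inside the proof of Theorem~\ref{2 group ring} yields $g^2-1\in J(R\langle g\rangle)$.

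Assuming $p\neq 3$, I would use the coprimality $\gcd(p,3)=1$ to find integers $a,b$ with $ap+3b=1$, so that $ap\in 1+J(R)\subseteq U(R)$ and, by centrality of $p$, $p\in U(R)\subseteq U(R\langle g\rangle)$. Write $S=R\langle g\rangle$. The goal is to show that $G$ has exponent $2$, and I would split further on the parity of $p$.

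For $p$ an odd prime distinct from $3$, after replacing $g$ by $g^{p^{k-1}}$ I may assume $g$ has order $p$. Combining $g^2-1\in J(S)$ with $g^p=1$ and $\gcd(2,p)=1$ shows that $\bar g=\bar 1$ in $S/J(S)$, that is, $g-1\in J(S)$. Setting $\phi(g)=1+g+g^2+\cdots+g^{p-1}$, the telescoping identity
\[
\phi(g)-p=\sum_{i=1}^{p-1}(g^i-1)\in (g-1)S\subseteq J(S)
\]
places $\phi(g)\in p+J(S)\subseteq U(S)$. But $0=1-g^p=(1-g)\phi(g)$, so $g=1$, contradicting the order assumption; hence no non-trivial $p$-element exists in $G$, so $G=\{1\}$ and the exponent-$2$ conclusion holds vacuously.

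For $p=2$ and $g\in G$ of order $2^k$ with $k\ge 2$, I would exploit the classical factorisation
\[
g^{2^k}-1=(g^2-1)\prod_{i=1}^{k-1}(g^{2^i}+1).
\]
Since $g^2-1\in J(S)$, successive squarings give $g^{2^i}\in 1+J(S)$ for each $i\ge 1$, hence $g^{2^i}+1\in 2+J(S)\subseteq U(S)$; the product is therefore a unit, and $g^{2^k}-1=0$ forces $g^2=1$, contradicting the assumption $k\ge 2$. Thus every element of $G$ satisfies $g^2=1$, and $G$ has exponent $2$. The main obstacle I foresee is the odd-prime case: both steps (promoting $\bar g^2=\bar 1$ to $\bar g=\bar 1$, and showing $\phi(g)\in U(S)$) must be fed by the arithmetic provided jointly by $3\in J(R)$ and $p\ne 3$, and the argument necessarily collapses at $p=3$, in direct parallel with the way Theorem~\ref{2 group ring} fails to exclude $p=2$.
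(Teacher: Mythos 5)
Your proof is correct, and for the most part it runs on the same engine as the paper's: apply the 2-UNJ property to the unit $g$, pass from $\sqrt{J(RG)}$ into $J(R\langle g\rangle)$ via Lemma \ref{ext le} and centrality, and use $3\in J(R)$ to make $2$ invertible. Your $p=2$ case is essentially the paper's Case 1 repackaged: the paper runs a minimal-counterexample argument on the order $2^n$, showing $1+g^{2^{n-1}}=(1-g^{2^{n-1}})+3g^{2^{n-1}}-g^{2^{n-1}}\in J+U\subseteq U$ and cancelling against $1-g^{2^{n-1}}$, whereas you telescope the factorisation of $g^{2^k}-1$; the unit being produced ($1+g^{2^i}\equiv 2 \bmod J$) is identical. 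The genuine divergence is in the odd-prime case. The paper multiplies $1-g^{2k}\in J(R\langle g\rangle)$ by $-g$ to get $1-g\in J(R\langle g\rangle)$, concludes $\Delta(R\langle g\rangle)\subseteq J(R\langle g\rangle)$, and then invokes Connell's Proposition 15 to force $\langle g\rangle$ to be a $q$-group with $q\in J(R)$, hence $q=3$; this handles all odd $p$ at once and directly yields that $G$ is a $3$-group. You instead restrict to $p\neq 3$, invert $p$ from $\gcd(p,3)=1$ and $3\in J(R)$, and show the norm element $1+g+\cdots+g^{p-1}\in p+J(S)$ is a unit, forcing $g=1$. Your route is more elementary and self-contained (no appeal to Connell's structural result on augmentation ideals), at the cost of only covering $p\neq3$ — which is all the theorem requires, since for $p=3$ the conclusion is automatic. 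Both arguments are sound; the only cosmetic remark is that in your odd-prime case the resulting $G=\{1\}$ is most naturally filed under ``$G$ is a $3$-group'' rather than ``exponent $2$,'' but the disjunction holds either way.
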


\begin{proof}
We distinguish two cases, namely when $p=2$ and when $p \neq 2$.

\medskip

\textbf{Case 1:} If $p=2$, then $G$ is a 2-group. For any $g \in G$, there exists $n \in \mathbb{N}$ such that $g^{2^n}=1$. Letting $n>1$ be the smallest such integer, since $2^{n-1}$ is even, there exists $m \in \mathbb{N}$ with $2^{n-1}=2m$ such that
\[
1-g^{2^{n-1}}=1-g^{2m} \in Nil(RG)+J(RG) \subseteq \sqrt{J(RG)}.
\]
Thus, there exists $k \in \mathbb{N}$ with $$(1-g^{2^{n-1}})^k \in J(RG) \cap R\langle g\rangle \subseteq J(R\langle g\rangle).$$ But, since $1-g^{2^{n-1}} \in R\langle g\rangle$ is central, we have $1-g^{2^{n-1}} \in J(R\langle g\rangle)$.

Moreover, as $3 \in J(R)$, by analogy with Theorem \ref{2 group ring} we can show that $3 \in J(RG) \cap R\langle g\rangle \subseteq J(R\langle g\rangle)$ and, therefore,
\[
1+2g^{2^{n-1}}=1-g^{2^{n-1}}+3g^{2^{n-1}} \in J(R\langle g\rangle).
\]
Consequently, it must be that 
\[
1+g^{2^{n-1}}=1+2g^{2^{n-1}}-g^{2^{n-1}} \in J(R\langle g\rangle) +U(R\langle g\rangle) \subseteq U(R\langle g\rangle).
\]
However, $$(1-g^{2^{n-1}})(1+g^{2^{n-1}})=1-g^{2^{n}}=0,$$ and since $1+g^{2^{n-1}}\in U(R\langle g\rangle)$, we just have $1-g^{2^{n-1}}=0$, contradicting the minimality of $n$. Thus, $n=1$.

\medskip

\textbf{Case 2:} If $p \neq 2$, then $p$ must be odd. Let $g \in G$ with $g^{p^n}=1$. Suppose also that $p^n=2k+1$. Then, $1-g^{2k} \in Nil(RG)+J(RG)$ and, paralleling to the previous case, we can show $1-g^{2k}\in J(R\langle g\rangle)$. Multiplying this by $-g$ gives that
\[
1-g=-g(1-g^{2k})\in J(R\langle g\rangle).
\]
This forces at once that $\Delta(R\langle g\rangle) \subseteq J(R\langle g\rangle)$ by noticing that $$1-g^n=(1-g)(1+g + \cdots + g^{n-1}) \in J(R\langle g\rangle).$$ However, \cite[Proposition 15(i)]{coon} works to get that $\langle g\rangle$ is a $q$-group with $q \in J(R)$. Since $3\in J(R)$ and two distinct primes cannot be in $J(R)$, we must have $q=3$. So, there exists $m \in \mathbb{N}$ such that $g^{3^m}=1$. And, as $g \in G$ was arbitrary, so $G$ is a 3-group, as pursued.
\end{proof}

We now proceed by proving the following assertion, which is pertained to the opposite consideration of when the group ring is 2-UNJ.

\begin{theorem}\label{T2}
Let $R$ be a $2$-UNJ ring with $2 \in J(R)$, and $G$ a locally finite $2$-group. Then, $RG$ is a $2$-UNJ ring.
\end{theorem}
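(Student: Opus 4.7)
The plan is to transfer the 2-UNJ property from $R$ to $RG$ through the augmentation map $\varepsilon \colon RG \to R$. The critical technical ingredient is the containment $\Delta(RG) \subseteq J(RG)$, which holds under the present hypotheses: $G$ is a locally finite $2$-group and $2 \in J(R)$. This is a classical fact in the spirit of \cite[Proposition 15]{coon} (which has already been used in the preceding theorem); it reduces via local finiteness to finite $2$-subgroups $H \leq G$, where one exploits that $\Delta(\mathbb{F}_2 H)$ is nilpotent and that $2 \in J(R)$ to conclude $\Delta(RH) \subseteq J(RH)$, and then these inclusions propagate to $RG$.

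With this in hand, I would pick an arbitrary $u \in U(RG)$ and apply the augmentation map to obtain $\varepsilon(u) \in U(R)$. Since $R$ is 2-UNJ, there exist $q \in \mathrm{Nil}(R)$ and $j \in J(R)$ with $\varepsilon(u)^2 = 1 + q + j$. Viewing $q$ and $j$ as elements of $RG$ via the canonical embedding $R \hookrightarrow RG$, one has $q \in \mathrm{Nil}(RG)$ and, by Lemma \ref{ext le}(2), $j \in J(R)G \subseteq J(RG)$.

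Next, because $\varepsilon$ is a ring homomorphism and
\[
\varepsilon\bigl(u^2 - 1 - q - j\bigr) = \varepsilon(u)^2 - 1 - q - j = 0,
\]
the difference $\delta := u^2 - 1 - q - j$ lies in $\ker\varepsilon = \Delta(RG) \subseteq J(RG)$. Setting $j' := j + \delta \in J(RG)$, we obtain
\[
u^2 = 1 + q + j', \qquad q \in \mathrm{Nil}(RG),\ j' \in J(RG),
\]
so that $RG$ is 2-UNJ, as desired.

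The main (and essentially the only non-routine) obstacle is to justify $\Delta(RG) \subseteq J(RG)$ rigorously. One strategy is to invoke \cite[Proposition 15]{coon} directly, exactly as was done in Theorem \ref{2 group ring}. Alternatively, one argues by local finiteness: for any finitely generated subgroup $H \leq G$ (automatically a finite $2$-group), $\Delta(\mathbb{F}_2 H)$ is nilpotent, so $\Delta(RH)^n \subseteq 2 \cdot RH$ for some $n$, and the hypothesis $2 \in J(R)$ (combined with centrality of $2$ and Lemma \ref{ext le}(2)) upgrades this to $\Delta(RH) \subseteq J(RH)$; taking unions yields $\Delta(RG) \subseteq J(RG)$. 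Everything else in the proof is a formal manipulation using the already established lemmas.
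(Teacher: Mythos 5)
Your proposal is correct and follows essentially the same route as the paper: both arguments reduce a unit $u\in U(RG)$ to its augmentation $\varepsilon(u)\in U(R)$ (the paper writes $u=u_0+f$ with $f\in\Delta(RG)$, which is the same thing), apply the 2-UNJ property of $R$, and absorb the error term into $J(RG)$ using $J(R)\subseteq J(RG)$ from Lemma \ref{ext le} together with the key containment $\Delta(RG)\subseteq J(RG)$. The only cosmetic difference is that the paper justifies $\Delta(RG)\subseteq J(RG)$ by citing \cite[Lemma 2]{zhouc} rather than re-deriving it via local finiteness as you sketch.
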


\begin{proof}
Since $RG/\Delta(RG) \cong R$, we may assume $RG = \Delta(RG) + R$. Let $u \in U(RG)$ with $u = u_0 + f$, where $u_0 \in R$ and $f \in \Delta(RG)$.

First, we show $u_0 \in U(R)$. Since $u \in U(RG)$, there exists $v = v_0 + g \in RG$ with $v_0 \in R$, $g \in \Delta(RG)$ such that $uv = vu = 1$ implying $1 - u_0v_0, 1 - v_0u_0 \in \Delta(RG)$.

Applying now the augmentation map $\varepsilon$, we get $$\varepsilon(1 - u_0v_0) = 0 = \varepsilon(1 - v_0u_0).$$ Since $u_0, v_0 \in R$, this yields $1 - u_0v_0 = 0 = 1 - v_0u_0$, proving $u_0 \in U(R)$.

But, as $R$ is 2-UNJ, we have $u_0^2 = 1 + q + j$, where $q \in Nil(R)$ and $j \in J(R)$. Since $\Delta(RG)$ is an ideal, $u^2 = u_0^2 + f'$ with $f' \in \Delta(RG)$.

Furthermore, Lemma \ref{ext le} employs to get that $J(R) \subseteq J(RG)$ and, by \cite[Lemma 2]{zhouc}, $\Delta(RG) \subseteq J(RG)$. Therefore, we receive
\[
u^2 = 1 + n + j + f' \in 1 + Nil(RG) + J(RG) + J(RG) = 1 + Nil(RG) + J(RG).
\]
So, $RG$ is a 2-UNJ ring, as asked.
\end{proof}

We finish our work with two concluding statements.

\begin{theorem}
Let $R$ be a $2$-UNJ ring with $3 \in J(R)$, and $G$ a locally finite $3$-group. Then, $RG$ is a $2$-UNJ ring.
\end{theorem}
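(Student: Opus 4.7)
The plan is to run exactly the same strategy as in Theorem \ref{T2}, replacing the prime $2$ by the prime $3$, and to identify the precise auxiliary fact needed in place of \cite[Lemma 2]{zhouc}. Since $RG/\Delta(RG)\cong R$, any element of $RG$ can be written as $u_0+f$ with $u_0\in R$ and $f\in\Delta(RG)$. Given $u\in U(RG)$, I would apply the augmentation map $\varepsilon$ to the equations $uv=vu=1$ to conclude, exactly as in the proof of Theorem \ref{T2}, that the constant component $u_0$ lies in $U(R)$.

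Next, I would invoke the 2-UNJ hypothesis on $R$ to write $u_0^2=1+q+j$ with $q\in \mathrm{Nil}(R)$ and $j\in J(R)$. Expanding $u^2=(u_0+f)^2$ and using that $\Delta(RG)$ is a (two-sided) ideal of $RG$, one obtains $u^2=u_0^2+f'$ for some $f'\in\Delta(RG)$. Combining these gives
\[
u^2=1+q+j+f'.
\]
Lemma \ref{ext le}(2) guarantees $J(R)G\subseteq J(RG)$, so $j\in J(RG)$; clearly $q\in\mathrm{Nil}(R)\subseteq\mathrm{Nil}(RG)$. It therefore only remains to absorb $f'$ into $J(RG)$, i.e.\ to establish the containment $\Delta(RG)\subseteq J(RG)$ under the present hypotheses.

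The main obstacle is this last containment: I need the exact analogue of \cite[Lemma 2]{zhouc} for the prime $3$, namely that if $R$ is any ring with $3\in J(R)$ and $G$ is a locally finite $3$-group, then $\Delta(RG)\subseteq J(RG)$. The standard argument goes by reducing to finitely generated (hence finite) subgroups $H\le G$ via local finiteness, and then showing $\Delta(RH)\subseteq J(RH)$ for each finite $3$-group $H$. For such an $H$, every element $g\in H$ satisfies $g^{3^n}=1$ for some $n$, so $1-g$ divides $1-g^{3^n}=0$; more importantly, $\Delta(RH)$ is generated as an $R$-module by elements $1-g$, and using $3\in J(R)$ one shows by induction on $|H|$ (or on the nilpotency class of the $3$-group $H$) that every such $1-g$ is quasi-regular in $RH$. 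Passing back through the direct limit $RG=\varinjlim RH$ and using that $J$ behaves well along this directed union (Lemma \ref{ext le}(1) gives $J(RG)\cap RH\subseteq J(RH)$, and the reverse membership for the generators follows from the local argument), I obtain $\Delta(RG)\subseteq J(RG)$.

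With this established, $f'\in J(RG)$, so
\[
u^2=1+q+(j+f')\in 1+\mathrm{Nil}(RG)+J(RG),
\]
which shows that $RG$ is 2-UNJ, as required. I expect the bulk of the work to be in making the $\Delta(RG)\subseteq J(RG)$ step clean; once that is in place, the rest is a verbatim adaptation of the proof of Theorem \ref{T2}.
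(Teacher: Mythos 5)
Your proposal is correct and follows essentially the same route as the paper, whose own proof consists of the single remark that the argument is similar to that of Theorem \ref{T2}; you carry out exactly that adaptation, correctly isolating the containment $\Delta(RG)\subseteq J(RG)$ for a locally finite $3$-group with $3\in J(R)$ as the one auxiliary fact replacing \cite[Lemma 2]{zhouc}. Your sketch of that containment (reduction to finite $3$-subgroups and quasi-regularity of the generators $1-g$) is the standard argument and fills in a detail the paper leaves implicit.
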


\begin{proof}
The proof is similar to that of the preceding Theorem \ref{T2}.
\end{proof}

\begin{theorem}
Let $R$ be a $2$-UNJ ring with $3 \in J(R)$, and $G$ a locally finite group of exponent $2$. Then, $RG$ is a $2$-UNJ ring.
\end{theorem}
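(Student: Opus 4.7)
The plan is to mirror the spirit of Theorem \ref{T2} while replacing its decisive inclusion $\Delta(RG)\subseteq J(RG)$ (which is unavailable here, since $2\in J(R)$ would contradict $3\in J(R)$, and indeed $\mathbb{Z}_3[\mathbb{Z}_2]\cong\mathbb{Z}_3\oplus\mathbb{Z}_3$ has $\Delta\neq 0$ yet $J=0$) by a character/idempotent decomposition carried out over each finitely generated subgroup. First I would note that $3\in J(R)$ forces $-2=1-3\in 1+J(R)\subseteq U(R)$, so $2\in U(R)$. Since every group of exponent $2$ is abelian (because $g^2=h^2=(gh)^2=1$ gives $gh=hg$), local finiteness implies that every finitely generated subgroup $H\leq G$ is a finite elementary abelian $2$-group, i.e.\ $H\cong(\mathbb{Z}/2)^n$ for some $n\geq 0$.

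I would then fix $u\in U(RG)$ arbitrarily and choose a finite subgroup $H\leq G$ containing the supports of both $u$ and $u^{-1}$; this guarantees $u\in U(RH)$. Since $2\in U(R)$ and $|H|=2^n$, the character idempotents
\[
e_{\chi}=\frac{1}{2^n}\sum_{h\in H}\chi(h)\,h,\qquad \chi\in\Hom(H,\{\pm 1\}),
\]
form a complete orthogonal system in $RH$, yielding a ring isomorphism $\varphi:RH\xrightarrow{\sim}R^{2^n}$ whose restriction to the scalar copy of $R$ is the diagonal embedding. By Proposition \ref{3.1}, $R^{2^n}$---and hence $RH$---is $2$-UNJ, so there exist $q\in Nil(RH)$ and $j\in J(RH)$ with $u^2=1+q+j$.

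It then suffices to transport both pieces into $RG$. The inclusion $Nil(RH)\subseteq Nil(RG)$ is automatic, since $RH$ is a subring of $RG$. For the Jacobson part, $\varphi$ identifies $J(RH)$ with $J(R^{2^n})=J(R)^{2^n}$, and a direct computation shows that this pulls back to the two-sided ideal $J(R)\cdot RH$ generated by the scalar copy of $J(R)$ in $RH$. Lemma \ref{ext le}(2), applicable because $G$ is locally finite, then yields $J(R)G\subseteq J(RG)$, whence $J(RH)=J(R)\cdot RH\subseteq J(RG)$. Combining, $u^2\in 1+Nil(RG)+J(RG)$, and since $u\in U(RG)$ was arbitrary, $RG$ is $2$-UNJ.

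The main obstacle, as indicated, is precisely the Jacobson-radical comparison $J(RH)\subseteq J(RG)$: the absorption trick of Theorem \ref{T2} is unavailable, so one must exploit the splitting of $RH$ as a product of copies of $R$ (which relies crucially on $2\in U(R)$ together with the exponent-$2$ hypothesis), and the identification $J(RH)=J(R)\cdot RH$ is the one computational point requiring real care.
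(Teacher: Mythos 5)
Your proof is correct and follows essentially the same route as the paper's: reduce to a finite elementary abelian $2$-subgroup $H$, use $2\in U(R)$ (forced by $3\in J(R)$) to split $RH\cong R^{2^n}$, invoke closure of the $2$-UNJ property under finite direct products, and transport the nilpotent and radical pieces from $RH$ into $RG$. In fact you supply two details the paper elides --- taking a genuine unit $u$ with $u^{-1}$ also supported in $H$ so that $u\in U(RH)$, and justifying $J(RH)\subseteq J(RG)$ via the identification $J(RH)=J(R)\cdot RH$ together with Lemma \ref{ext le}(2) --- so your write-up is, if anything, the more complete one.
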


\begin{proof}
Let $f \in RG$. Thus, there exists a finite subgroup $H$ of $G$ such that $f \in RH$. But, one knows that $H$ is a direct product of finite copies of $C_2$. However, since $2 \in U(R)$, $RH$ is a finite direct sum of copies of $RC_2$, and hence is a finite direct sum of copies of $R$, so $RH$ is 2-UNJ.

So, we can write $f = 1 + n + j$, where $n \in Nil(RH) \subseteq Nil(RG)$. But Lemma \ref{ext le} is applicable to have $J(RH) \subseteq J(RG)$, giving us that $j \in J(RH) \subseteq J(RG)$. Therefore, $RG$ is a 2-UNJ ring, as wanted.
\end{proof}


\vskip3.0pc

\end{document}